\newtheorem{theorem}{Theorem}
\newtheorem{lemma}[theorem]{Lemma}
\newtheorem{definition}[theorem]{Definition}
\newtheorem{proposition}[theorem]{Proposition}
\newcommand{\ghd}[1]{d_{GH}^{#1}}
\newcommand{\hd}[1]{d_{H}^{#1}}
\newcommand{\id}{\operatorname{id}}
\newcommand{\dist}{d}
\newcommand{\N}{\mathbb N}
\newcommand{\R}{\mathbb R}
\newcommand{\eps}{\varepsilon}
\newcommand{\abs}[1]{\left\lvert #1 \right\rvert}
\newcommand{\der}{\mathrm d}
\newcommand{\ip}[2]{\left\langle#1,#2\right\rangle}
\newcommand{\diam}{\operatorname{diam}}
\newcommand{\sisus}{\operatorname{int}}
\newcommand{\p}{\partial}
\newcommand{\Gr}{\mathcal{G}}
\newcommand{\Hess}{\mathcal H}
\newcommand{\Order}{\mathcal O}
\newcommand{\Pwave}{\textit{P}}
\newcommand{\Swave}{\textit{S}}
\newcommand{\CS}[3]{\Gamma^{#1}_{\phantom{#1}#2#3}}
\newcommand{\Ci}{\mathscr{C}_{\mathrm{JF}}}
\newcommand{\Cii}{\mathscr{C}_{\mathrm{SFF}}}
\newcommand{\Civ}{\mathscr{C}_{\mathrm{diam}}}
\newcommand{\Cv}{\mathscr{C}_{\mathrm{sec}-}}
\newcommand{\Cvi}{\mathscr{C}_{\mathrm{sec}+}}
\newcommand{\Cvii}{\mathscr{C}_{\mathrm{exp}}}
\newcommand{\Cviii}{\mathscr{C}_{\mathrm{dist}}}
\newcommand{\Cix}{C_{1}} 
\newcommand{\Cx}{C_{2}} 
\newcommand{\Cxi}{C_{3}} 
\newcommand{\Cxii}{C_{4}} 
\newcommand{\Cxiii}{C_{\mathrm{b}}} 
\newcommand{\Cxiv}{C_{\mathrm{d}}}
\newcommand{\Cxv}{C_{\mathrm{e}}}
\newcommand{\Cxvi}{C_{\mathrm{h}}}
\newcommand{\Cxvii}{C_{\mathrm{g}}}
\newcommand{\Cxix}{C_{5}} 
\newcommand{\Cxx}{\mathscr{C}_{\mathrm{H1}}}
\newcommand{\Cxxi}{\mathscr{C}_{\mathrm{H2}}}
\newcommand{\Cxxii}{C_{\mathrm{f}}}
\newcommand{\Cxxiii}{C_{\mathrm{i}}}
\newcommand{\Cxxv}{C_{6}} 
\newcommand{\Cxxvi}{C_{7}} 
\newcommand{\Cxxvii}{C_{8}} 
\newcommand{\Cxxviii}{C_{\mathrm{a}}}
\newcommand{\Cxxx}{C_{\mathrm{c}}}
\title[Stable reconstruction with unknown sources]{Stable reconstruction of simple Riemannian manifolds from unknown interior sources}
\date{\today}
\author{Maarten V. de Hoop}
\address{Department of Computational and Applied Mathematics\\
Rice University\\
6100 Main MS-134, Houston, TX 77005-1892, USA\\
\texttt{mvdehoop@rice.edu}}
\author{Joonas Ilmavirta}
\address{Unit of Computing Sciences\\
Tampere University\\
Kalevantie 4, FI-33014 Tampere University, Finland\\
\texttt{joonas.ilmavirta@tuni.fi}}
\author{Matti Lassas}
\address{Department of Mathematics and Statistics\\
University of Helsinki\\
P.O. Box 68 (Gustaf H\"allstr\"omin katu 2B), FI-00014 University of Helsinki, Finland\\
\texttt{matti.lassas@helsinki.fi}}
\author{Teemu Saksala}
\address{Department of Mathematics\\
North Carolina State University\\
2311 Stinson Drive, Raleigh, NC 27695-8205, USA\\
\texttt{tssaksal@ncsu.edu}}
\begin{document}

\maketitle

\begin{abstract}
Consider the geometric inverse problem: There is a set of delta-sources in spacetime that emit waves travelling at unit speed. If we know all the arrival times at the boundary cylinder of the spacetime, can we reconstruct the space, a Riemannian manifold with boundary? With a finite set of sources we can only hope to get an approximate reconstruction, and we indeed provide a discrete metric approximation to the manifold with explicit data-driven error bounds when the manifold is simple. This is the geometrization of a seismological inverse problem where we measure the arrival times on the surface of waves from an unknown number of unknown interior microseismic events at unknown times. The closeness of two metric spaces with a marked boundary is measured by a labeled Gromov--Hausdorff distance. If measurements are done for infinite time and spatially dense sources, our construction produces the true Riemannian manifold and the finite-time approximations converge to it in the metric sense.
\end{abstract}

\section{Introduction}

We study a geometric inverse problem arising from global seismology.
Multiple sources go off at unknown times and we measure the arrival times, not knowing which arrivals belong to the same event.
The aim is to extract as much information as possible about the planet, modeled here by a simple Riemannian manifold~$M$ of dimension $n=\dim(M)\geq2$, and the sources from this information.

Mathematically, the problem boils down to this:
Suppose we know the boundary of a Riemannian manifold and for some unknown collection of interior points we know the union of their boundary distance graphs, each shifted with an unknown offset.
What can we say about the manifold and these special points from this data?
A more detailed description of the data can be found in Section~\ref{sec:def-thm} below.

The distances between the source points turn out to be determined exactly by the data (Theorem~\ref{thm:A}), but there is no hope of reconstructing the manifold if the source set is finite.
However, the source set can be regarded as a discrete metric space, and this space approximates the Riemannian manifold.
It is not only close as a metric space, but we can also assign approximate boundary points with good accuracy (Theorem~\ref{thm:C23}).
To describe the similarity of two metric spaces ``with the same boundary'', we define a labeled Gromov--Hausdorff distance.
This extends the classical Gromov--Hausdorff distance and compares both the similarity of the metric spaces and the sameness of the boundaries --- with a fixed model space for the boundary.
It is in the sense of this distance that our approximation is good, and the quality of the approximation can be estimated explicitly directly from data.

The sources can be point sources in space time produced by a suitable stochastic process (Proposition~\ref{prop:poisson}), for example.
We present a method to look at the data on the boundary without knowing the manifold and reconstruct an approximate manifold --- and get an explicit error bound on our reconstruction.
This increases the applicability of geometric inverse problems to the messy cases with real data, but we do not attempt to optimize the bounds.
When it comes to explicit geometric estimates, this paper should be seen as a proof of concept.

We assume the source events to be discrete in the spacetime, but their projections to the space can be dense.
In this case the approximate reconstruction is actually perfect, and even the smooth and Riemannian structure is determined (Theorem~\ref{thm:B}).
If measurements are made for finite but increasing time, the approximate reconstructions converge to the true manifold in the labeled Gromov--Hausdorff distance if the sources are spatially dense (Theorem~\ref{thm:D}).

Estimation of the quality of the approximation from data alone is lossy.
To prove that it is not hopelessly so, we prove that by making the source points dense enough the observed density can be made arbitrarily small.
There is an explicit estimate between the true and observed density in both directions (Theorems~\ref{thm:B} and~\ref{thm:reverse}).

We solve a problem stemming from physics by developing geometry rather than by applying \emph{ad hoc} tricks.
These developments include the introduction of the labeled Gromov--Hausdorff distance and various quantitative descriptions of simplicity of a Riemannian manifold.

From a more applied point of view, the key advances are that we introduce of a new and more physically relevant problem and provide quantitative and fully data-driven error estimates.

The precise definitions and statements of results are given in Section~\ref{sec:def-thm}.
We will discuss the results and their context in Section~\ref{sec:discussion}.
The overall strategy of proofs is given in Section~\ref{sec:proof-plan} and the actual proofs follow in subsequent sections.

\subsection*{Acknowledgements}

MVdH was supported by the Simons Foundation under the MATH + X program, the National Science Foundation under grant DMS-1815143, and the corporate members of the Geo-Mathematical Imaging Group at Rice University.
JI was supported by the Academy of Finland (projects 332890 and 336254).
ML was supported by Academy of Finland (projects 284715 and 303754).
TS was supported by the Simons Foundation under the MATH + X program and the corporate members of the Geo-Mathematical Imaging Group at Rice University.
The authors want to thank Peter Caday and Vitaly Katsnelson for useful discussions.

\section{Definitions and theorems}
\label{sec:def-thm}

\subsection{Arrival time data}

Let~$M$ be a Riemannian manifold with boundary.
We denote by $d\colon M\times M\to\R$ its Riemannian distance.
The spacetime $M\times\R$ comes with two natural projections,~$\pi$ onto~$M$ and~$\tau$ onto~$\R$.

Let $S\subset M\times\R$ be a set of sources.
The source $s\in S$ goes off at the point~$\pi(s)$ at the time~$\tau(s)$.
The data measured is the arrival times of signals at the boundary, given as the set
\begin{equation}
Q(S)
=
\{
(x,\tau(s)+d(x,\pi(s))
;\:
x\in\partial M,
s\in S
\}.
\end{equation}
We emphasize that we know the set~$Q(S)$ but not the set~$S$.
We only observe the data as this point set without any labels to tell which arrivals correspond to the same sources.

The arrival time function of a source $s\in S$ is the function $a_s\colon\partial M\to\R$ given by $a_s(x)=\tau(s)+d(\pi(s),x)$.
Let us denote the graph of the function~$a_s$ by $\Gr(a_s)\subset\partial M\times\R$.
The data set can be rewritten as $Q(S)=\bigcup_{s\in S}\Gr(a_s)$.

Let $\phi\colon\partial M_1\to\partial M_2$ be a diffeomorphism between two boundaries.
If $A\subset\partial M_2\times\R$, we define
\begin{equation}
\phi^*A
=
\{
(\phi^{-1}(x),t)
;\:
(x,t)\in A
\}
\subset
\partial M_1\times\R.
\end{equation}
The set~$A$ would typically be the data set~$Q(S_2)$ or a subset thereof.
When there are two manifolds, we decorate all the objects with the subscripts~$1$ and~$2$ when clarity requires so.

A subset of the spacetime $M\times\R$ is said to be discrete when it has no accumulation points.
If~$M$ is compact, it follows that any time slice $M\times[a,b]$ contains only finitely many points of a discrete set.

Our results apply to so-called simple manifolds.
A Riemannian manifold is called simple if it is compact with strictly convex boundary (in the sense of definiteness of the second fundamental form) and the exponential map at every point is a diffeomorphism on its maximal domain of definition.
Consequently there are no conjugate points and any two points on the manifold, including the boundary, are joined by a unique geodesic depending smoothly on the endpoints.

A key concept in our analysis is density.
We say that a subset $A\subset X$ of a metric space~$X$ is $\eps$-dense if the balls $B(a,\eps)$ with $a\in A$ cover~$X$.
Equivalently, every point in~$X$ has a point of~$A$ at a distance below~$\eps$.

We set out to study whether and how a Riemannian manifold~$M$ can be determined by the set~$Q(S)$ and prior knowledge of the boundary~$\partial M$.

\subsection{Precise determination}

We begin by studying what can be determined from the data with no error.

The source sets $S_i\subset M_i\times\R$ can be finite or infinite.
However, the discreteness assumption implies that every $S_i\cap(M_i\times[-T,T])$ is finite and so the source sets are at most countably infinite.
Although the sources are not dense in the spacetime, the source points in space can be dense.
In that case the same data determines the whole manifold up a diffeomorphism.

\begin{theorem}
\label{thm:B}
Let~$M_1$ and~$M_2$ be simple Riemannian manifolds so that the diameter of both manifolds is at most~$\Civ$ and the sectional curvature is bounded from above by $\Cvi>0$ so that
\begin{equation}
\label{eq:diameter-curvature-bound}
\Civ\sqrt{\Cvi}<\pi.
\end{equation}
Let
$\phi\colon\partial M_1\to\partial M_2$
be a Riemannian isometry.
Suppose the two sets $S_i\subset \sisus(M_i)\times\R$ are discrete in $M_i\times\R$.
Let $Q(S_i)\subset\partial M_i\times\R$ be defined as above.

If the set $\pi_1(S_1)\subset M_1$ is dense and $Q(S_1)=\phi^*Q(S_2)$, then
\begin{enumerate}
\item
there is a smooth Riemannian isometry 
$\Phi\colon M_1\to M_2$
so that
$\phi=\Phi|_{\partial M_1}$ and
\item
there is a bijection $\xi\colon S_1\to S_2$ between the sources so that 
\begin{equation}
\label{eq:xi-Phi}
\xi(p,t)
=
(\Phi(p),t)
\end{equation}
for all $(p,t)\in S_1$.
\end{enumerate}
\end{theorem}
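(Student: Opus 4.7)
The plan is to use Theorem~\ref{thm:A} to produce a source-matching bijection $\xi$, turn it into a distance-preserving map on the dense set $\pi_1(S_1)$, extend this map to $M_1$ by uniform continuity, and finally upgrade it to a smooth Riemannian isometry using the Myers--Steenrod theorem.

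First, I would associate to each source $s\in S_i$ its arrival time function $a_s = \tau(s) + r_{\pi_i(s)}$, where $r_p(x) = d(p,x)$ denotes the boundary distance function of $p$. The simplicity of $M_i$ together with the curvature--diameter bound~\eqref{eq:diameter-curvature-bound} rules out conjugate points on the maximal domain of $\exp_p$; in particular the map $p \mapsto r_p$ is injective on $\sisus(M_i)$, so each $a_s$ determines $\pi_i(s)$ and $\tau(s)$ individually. Applying Theorem~\ref{thm:A} on each side, one recovers from the unlabeled data $Q(S_i)$ both the individual graphs $\Gr(a_s)$ and the pairwise distances between the source points. The hypothesis $Q(S_1) = \phi^\ast Q(S_2)$, together with the injectivity of $p\mapsto r_p$, then yields a unique bijection $\xi\colon S_1 \to S_2$ such that $a_{\xi(s)}\circ\phi = a_s$ on $\partial M_1$ for every $s\in S_1$. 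Equivalently, $\tau(\xi(s)) = \tau(s)$ and $r_{\pi_1(s)} = r_{\pi_2(\xi(s))}\circ\phi$.

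Second, I would define $\Phi_0\colon \pi_1(S_1) \to \pi_2(S_2)$ by $\Phi_0(\pi_1(s)) = \pi_2(\xi(s))$; this is well-defined by the aforementioned injectivity, and preserves pairwise distances by Theorem~\ref{thm:A} applied to both source sets. Since $\pi_1(S_1)$ is dense in $M_1$ and $\Phi_0$ is $1$-Lipschitz, it extends uniquely by continuity to a distance-preserving map $\Phi\colon M_1 \to M_2$. I would then check that $\Phi|_{\partial M_1} = \phi$: for $x\in\partial M_1$ and source projections $\pi_1(s_n)\to x$, the identity $a_{s_n}(x) = a_{\xi(s_n)}(\phi(x))$ pins down the limit value to be $\phi(x)$.

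Third, I would invoke the Myers--Steenrod theorem: a distance-preserving map between Riemannian manifolds is a smooth Riemannian isometric immersion, and since $\Phi$ is a local diffeomorphism its image $\Phi(M_1)$ is both open and compact (hence closed) in the connected manifold $M_2$, so $\Phi(M_1)=M_2$. This yields a smooth Riemannian isometry extending $\phi$, proving~(1). Part~(2) then follows immediately from the construction of $\xi$, since by design $\pi_2\circ\xi = \Phi\circ\pi_1$ and $\tau\circ\xi = \tau$.

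The hard part will be the first step --- separating the individual arrival time graphs inside the union $Q(S_i)$ and matching them across the two manifolds. A priori one sees only an unlabeled point set in $\partial M\times\R$, and different sources can produce graphs that cross or even share large subsets. Handling this is the substantive content of Theorem~\ref{thm:A} and will rely on the smoothness and $1$-Lipschitz property of arrival time functions together with the spacetime-discreteness of $S_i$. Once that step is in hand, the rest of the argument is a standard density-plus-rigidity sequence.
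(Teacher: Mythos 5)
Your overall route---produce a source-matching bijection $\xi$ from Theorem~\ref{thm:A}, build a distance-preserving map $\Phi_0$ on the dense set $\pi_1(S_1)$, extend it by continuity, and upgrade to a smooth isometry via Myers--Steenrod---is genuinely different from the paper's argument, which instead runs through Theorem~\ref{thm:D} (finite-time Gromov--Hausdorff approximations) and Propositions~\ref{prop:LGH} and~\ref{prop:E&L}, using the labeled-GH distance being zero to produce the isometry. Your approach is more elementary in conception and would be a legitimate alternative if it closed, but there is a real gap.

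The gap is the step that identifies $\Phi|_{\partial M_1}$ with $\phi$, and it traces back to your claim that ``$a_{\xi(s)}\circ\phi = a_s$ \dots Equivalently, $\tau(\xi(s)) = \tau(s)$ and $r_{\pi_1(s)} = r_{\pi_2(\xi(s))}\circ\phi$.'' This equivalence is false as stated: $a_{\xi(s)}\circ\phi = a_s$ only says that $r_{\pi_2(\xi(s))}\circ\phi - r_{\pi_1(s)}$ is a constant (namely $\tau(s)-\tau(\xi(s))$), and nothing in Theorem~\ref{thm:A} forces that constant to vanish. Claims~\ref{claim:ii}--\ref{claim:v} of Theorem~\ref{thm:A} give only pairwise distances, time \emph{differences}, and distance \emph{differences}, all of which are invariant under a uniform time shift of all sources on one side. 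Concretely, writing $c = \tau(s)-\tau(\xi(s))$ (which claim~\ref{claim:iii} shows is independent of $s$), taking the infimum of both sides of $a_{\xi(s)}\circ\phi = a_s$ over $\partial M_1$ gives $c = d_2(\pi_2(\xi(s)),\partial M_2) - d_1(\pi_1(s),\partial M_1)$, and your ``pins down the limit value'' step only reproduces this identity in the limit without determining $c$. Consequently, when $\pi_1(s_n)\to x\in\partial M_1$ you get $d_2(\Phi(x),\phi(y)) = c + d_1(x,y)$ for all $y\in\partial M_1$, which shows that $\phi(x)$ is the nearest boundary point to $\Phi(x)$ at distance $c$, but does not show $c=0$. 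A priori $\Phi$ could embed $M_1$ onto an ``inner core'' of $M_2$ at constant distance $c>0$ from $\partial M_2$, and the purely metric data you invoke cannot exclude this. To fix the absolute offset one needs second-order information: as $\pi_1(s_n)\to x\in\partial M_1$ the smallest Hessian eigenvalue of $a_{s_n}$ at its critical point blows up, so (since $\phi$ is a Riemannian isometry of the boundaries) the same happens to $a_{\xi(s_n)}$, forcing $\pi_2(\xi(s_n))\to\partial M_2$ by the estimate of the type in Lemma~\ref{lma:d<E}. This Hessian-blowup argument is precisely what the paper's boundary-density machinery (and hence the GH-convergence route) packages. Once $c=0$ is established, the rest of your argument (surjectivity via $\Phi(\partial M_1)=\partial M_2$ plus invariance of domain, then Lemma~\ref{lma:myers-steenrod}) goes through. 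Note also that the paper's Lemma~\ref{lma:myers-steenrod} is stated for bijective metric isometries, so you should establish surjectivity of $\Phi$ before invoking it rather than inferring surjectivity afterwards from the ``local diffeomorphism'' conclusion.
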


The condition~\eqref{eq:diameter-curvature-bound} in most of our theorems always holds when the manifold is \emph{a priori} known to have non-positive sectional curvature.
See Section~\ref{sec:325} for details.

The theorem immediately implies that the source times coincide: $\tau_1(s)=\tau_2(\xi(s))$ for all $s\in S_1$.

The main novelty of Theorem~\ref{thm:B} is in allowing the data set~$Q(S)$ to be a union rather than an indexed collection of graphs of the arrival time functions.
The rest of our results do not have a similar precedent, and our method of proof for Theorem~\ref{thm:B} is new and constructive.
See the discussion in Section~\ref{sec:discussion} below for details.

The next result concerns a situation with a finite amount of sources.
Informally, the next theorem states that knowledge of~$\partial M$ as a smooth manifold and the set~$Q(S)$ determines the distances between the sources, the time differences, and two kinds of distance differences at the boundary.
More formally:

\begin{theorem}
\label{thm:A}
Let~$M_1$ and~$M_2$ be simple Riemannian manifolds and $\phi\colon\partial M_1\to\partial M_2$ a diffeomorphism.
Suppose the two sets $S_i\subset \sisus(M_i)\times\R$ are discrete in $M_i\times\R$ and $\#\pi(S_1)\geq2$.
Let $Q(S_i)\subset\partial M_i\times\R$ be defined as above.

If $Q(S_1)=\phi^*Q(S_2)$, then
\begin{enumerate}
\item
\label{claim:i}
there is a bijection $\xi\colon S_1\to S_2$,
\item
\label{claim:ii}
$
d_1(\pi_1(s),\pi_1(r))
=
d_2(\pi_2(\xi(s)),\pi_2(\xi(r)))
$
for all $s,r\in S_1$,
\item
\label{claim:iii}
$
\tau_1(s)-\tau_1(r)
=
\tau_2(\xi(s))-\tau_2(\xi(r))
$
for all $s,r\in S_1$,
\item
\label{claim:iv}
$
d_1(\pi_1(s),x)-d_1(\pi_1(r),x)
=
d_2(\pi_2(\xi(s)),\phi(x))-d_2(\pi_2(\xi(r)),\phi(x))
$
for all $x\in\partial M_1$ and $s,r\in S_1$,
and
\item
\label{claim:v}
$
d_1(\pi_1(s),x)-d_1(\pi_1(s),y)
=
d_2(\pi_2(\xi(s)),\phi(x))-d_2(\pi_2(\xi(s)),\phi(y))
$
for all $x,y\in\partial M_1$ and $s\in S_1$.
\end{enumerate}
\end{theorem}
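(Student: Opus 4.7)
The plan is to (a) decompose each $Q(S_i)$ into its individual arrival time graphs $\Gr(a_{s,i})$, (b) match graphs across $\phi$ to produce the bijection $\xi$ and the pointwise identity $a_{s,1}(x)=a_{\xi(s),2}(\phi(x))$, and (c) extract claims~\ref{claim:ii}--\ref{claim:v} algebraically from this identity by exploiting the rigidity of simple manifolds. For (a), simplicity of $M_i$ makes each $d_i(\pi_i(s),\cdot)$ smooth on $\partial M_i$, so every $\Gr(a_{s,i})$ is a smooth compact embedded $(n-1)$-hypersurface of $\partial M_i\times\R$; discreteness of $S_i$ in $M_i\times\R$ ensures that only finitely many graphs meet any compact region. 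At a common point of two distinct graphs the tangential gradients must differ (otherwise identical Cauchy data would force the graphs to coincide globally), so the intersection locus has codimension at least one in $\partial M_i$, and each graph is recovered as a maximal smooth $(n-1)$-sheet of $Q(S_i)$ continued uniquely through any crossing. This decomposition is the main obstacle: the crossing loci must be handled carefully to ensure that smooth continuation of sheets is globally well-defined.

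A short injectivity lemma then shows that distinct sources produce distinct graphs: if $a_{s,i}=a_{r,i}$, then the boundary distance functions of $\pi_i(s)$ and $\pi_i(r)$ differ by a constant on $\partial M_i$, so their tangential gradients agree at every $x$. In a simple manifold, the tangential gradient at $x$ together with $|v|=1$ and the transversal outward arrival of geodesics at the strictly convex boundary determines the entire geodesic from the source through $x$; imposing this at every $x$ forces the two interior points to coincide, and then $\tau_i(s)=\tau_i(r)$ also. Combined with the equality $Q(S_1)=\phi^{*}Q(S_2)$, this yields a canonical bijection $\xi\colon S_1\to S_2$ with $\Gr(a_{s,1})=\phi^{*}\Gr(a_{\xi(s),2})$, which establishes claim~\ref{claim:i} and gives the pointwise identity
\begin{equation*}
\tau_1(s)+d_1(\pi_1(s),x) = \tau_2(\xi(s))+d_2(\pi_2(\xi(s)),\phi(x)),\qquad x\in\partial M_1.
\end{equation*}

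Claim~\ref{claim:v} is immediate: evaluate this identity at $x$ and at $y$ and subtract to cancel the $\tau$-terms. For the other three claims, subtract the identities for two sources $s,r\in S_1$ to obtain
\begin{equation*}
d_2(\pi_2(\xi(s)),\phi(x))-d_2(\pi_2(\xi(r)),\phi(x)) = \kappa + d_1(\pi_1(s),x)-d_1(\pi_1(r),x),
\end{equation*}
where $\kappa:=[\tau_1(s)-\tau_1(r)]-[\tau_2(\xi(s))-\tau_2(\xi(r))]$ is independent of $x$. In a simple manifold the unique geodesic joining two interior points $p,q$ extends to $\partial M$ at both ends, giving $\sup_{x\in\partial M}[d(p,x)-d(q,x)]=d(p,q)$ (attained at the forward extension past $q$) and $\inf_x[d(p,x)-d(q,x)]=-d(p,q)$ (attained at the backward extension past $p$). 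Since $\phi$ is a diffeomorphism of the boundaries, taking $\sup_x$ and $\inf_x$ of the displayed equation yields
\begin{equation*}
d_2(\pi_2(\xi(s)),\pi_2(\xi(r))) = \kappa+d_1(\pi_1(s),\pi_1(r)),\qquad -d_2(\pi_2(\xi(s)),\pi_2(\xi(r))) = \kappa-d_1(\pi_1(s),\pi_1(r)).
\end{equation*}
Adding these two forces $\kappa=0$, which is claim~\ref{claim:iii}; subtracting gives claim~\ref{claim:ii}; and substituting $\kappa=0$ back into the displayed equation is claim~\ref{claim:iv}.
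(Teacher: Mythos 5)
The main gap is in your claim that ``at a common point of two distinct graphs the tangential gradients must differ.'' This is false. In a simple manifold, if $p\neq p'$ lie on the same maximal geodesic and a boundary point $x$ is beyond both of them, then the unit-speed geodesics from $p$ and from $p'$ to $x$ share the same arrival direction, so $\nabla r_p(x)$ and $\nabla r_{p'}(x)$ have identical tangential components at $x$. Choosing the source times so that $\tau(s)+d(p,x)=\tau(s')+d(p',x)$ makes the two arrival-time graphs meet at $(x,a_s(x))$ with matching values and matching first derivatives. (Concretely, in a large Euclidean ball take $p=(1,0,\ldots,0)$, $p'=(2,0,\ldots,0)$, and $x=(R,0,\ldots,0)$.) So first-order data does not separate the sheets, and the heuristic appeal to ``Cauchy data'' does not apply, because the $a_s$ are not solutions of a PDE for which first-order jets determine them. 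The paper's Lemma~\ref{lma:tangent-order} makes precisely the weaker statement: at a common point either the gradients \emph{or} the Hessians must differ, and Proposition~\ref{prop:separation} resolves the continuation problem by lifting the graphs into the $2$-jet bundle over $\partial M\times\R$, where they become pairwise disjoint. Your step (a) needs this second-order lift; as written it does not yield a well-defined decomposition when two sheets are tangent to first order.

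Once the decomposition is granted (Proposition~\ref{prop:separation}), your steps (b) and (c) are correct and in fact give a nicer route to claims~\ref{claim:ii}--\ref{claim:iv} than the paper's. The paper locates the two boundary points where $\der a_s=\der a_r$ (the geodesic endpoints through the two source points) and evaluates $f_{rs}$ there to extract $\pm 2d(p_r,p_s)$; you instead use the variational identities $\sup_{x\in\partial M}[d(p,x)-d(q,x)]=d(p,q)$ and $\inf_{x\in\partial M}[d(p,x)-d(q,x)]=-d(p,q)$, valid on simple manifolds since the geodesic through $p$ and $q$ extends to the boundary at both ends. Taking sup and inf of the pointwise identity and adding/subtracting yields $\kappa=0$ and claims~\ref{claim:ii}--\ref{claim:iv} in one stroke, without needing to isolate specific boundary points. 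Your injectivity lemma (that $a_s=a_r$ forces $s=r$) via agreement of gradients at every boundary point is also essentially sound, though the paper's route goes through Lemma~\ref{lma:Slava+constant} and the uniqueness result of Kurylev.
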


While measuring for infinite time gives full uniqueness, it is not realistic.
Therefore we consider measurements for a finite but increasing time and show that there is an approximate reconstruction and in the limit of infinite time it tends to the correct one in a suitable sense.
To be able to state the results, we need to set up a way to compare the true manifold to a discrete approximation.
Before pursuing this direction, we record a result that helps clean up the data by disentangling the union of graphs into separate graphs.
The arrival time functions~$a_s$ are easily verified to be smooth when $\pi(s)\notin\partial M$ and the manifold is simple.

\begin{proposition}
\label{prop:separation}
Let~$M$ be a simple Riemannian manifold.
Suppose $S\subset \sisus(M)\times\R$ is discrete in $M\times\R$.
Then the set~$Q(S)$ and the Riemannian structure of~$\partial M$ determine the set
\begin{equation}
\{
a_s
;\:
s\in S
\},
\end{equation}
of arrival time functions. Furthermore, if $\Omega\subset\partial M\times\R$ is any open set, then~$Q(S)$ determines the set of connected components of the sets $\Gr(a_s)\cap\Omega$, $s\in S$.
\end{proposition}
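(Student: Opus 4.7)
My strategy is to identify each graph $\Gr(a_s)$ as an open dense ``regular'' part of $Q(S)$ where $Q(S)$ locally reduces to a single smooth graph, and then to extend each recovered partial graph globally using the source structure. Since $\pi(s)\in\sisus(M)$ and $M$ is simple, each $a_s$ is smooth on $\partial M$, and $\Gr(a_s)$ is a compact smooth embedded hypersurface in $\partial M\times\R$ that projects diffeomorphically onto $\partial M$. Because $\Gr(a_s)$ meets the compact slab $\partial M\times[-T,T]$ only when $s\in M\times[-T-\diam(M),T]$, discreteness of $S$ and compactness of $M$ imply that each compact graph $\Gr(a_s)$ is met by only finitely many other graphs.

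\textbf{Main obstacle: pairwise intersections are nowhere dense.} I would prove that for distinct $s,s'\in S$, the set $\Gr(a_s)\cap\Gr(a_{s'})$ has empty interior in $\Gr(a_s)$. Suppose for contradiction that $a_s=a_{s'}$ on an open $U\subset\partial M$. Equality of the tangential gradients on $U$, combined with the identity that the gradient of $d(p,\cdot)$ at $x$ equals the outgoing velocity of the unit-speed minimizing geodesic from $p$ to $x$, forces the tangential parts of the exit vectors $\dot\gamma_s(L_s)$ and $\dot\gamma_{s'}(L_{s'})$ to agree at every $x\in U$. Since both are unit and point outward across $\partial M$, the full vectors coincide; reversing, both $\pi(s)$ and $\pi(s')$ lie on the inward geodesic from $x$ in this common direction. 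If $\pi(s)=\pi(s')$, then equality of $a_s$ and $a_{s'}$ at any one point of $U$ forces $\tau(s)=\tau(s')$, a contradiction. Otherwise, simplicity provides a unique geodesic through the two distinct interior points $\pi(s),\pi(s')$, so all these inward geodesics coincide, and $U$ must lie in the intersection of that single geodesic with $\partial M$, a set of at most two points --- contradicting openness of $U$.

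\textbf{Regular set and global reconstruction.} Let $R\subset Q(S)$ consist of the points lying on exactly one graph. The finite local overlap together with closedness of each $\Gr(a_s)$ makes $R$ open in $Q(S)$, and the nowhere-dense intersection claim above makes $R\cap\Gr(a_s)$ open and dense in each (compact) $\Gr(a_s)$. A standard clopen argument then shows that every connected component $C$ of $R$ is contained in a unique $\Gr(a_s)$ and equals the graph of $a_s|_{U_C}$ over the open set $U_C\subset\partial M$ obtained as the projection of $C$ onto $\partial M$. To extend $a_s$ globally I recover the unique pair $(p,\tau)\in\sisus(M)\times\R$ satisfying $a_s(x)=\tau+d(p,x)$ for all $x\in U_C$; uniqueness is a direct rerun of the argument above, and existence is automatic from the construction of $a_s$. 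Setting $a_s=\tau+d(p,\cdot)$ then determines $a_s$ on all of $\partial M$, yielding the set $\{a_s:s\in S\}$. The furthermore assertion is immediate, since once every $a_s$ is known, the sets $\Gr(a_s)\cap\Omega$ and their connected components are directly computable.
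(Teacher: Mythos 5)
Your first-order argument showing that the pairwise intersections $\Gr(a_s)\cap\Gr(a_{s'})$ are nowhere dense is correct and is actually slicker than the paper at that specific step (it avoids Jacobi fields entirely). However, the proof has a genuine gap at the globalization step, which is where the real content of the proposition lies.

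The regular set $R$ and its connected components are indeed determined by $Q(S)$, and you correctly observe that each component $C$ is contained in a unique $\Gr(a_s)$. But a single graph $\Gr(a_s)$ can be cut into \emph{several} components of $R$ by the (nowhere dense, but nonempty) intersection locus with other graphs. The task is to decide, from the data alone, which components belong to the same $\Gr(a_s)$ --- i.e.\ how to continue a graph through a point where several graphs cross. Your recipe ``recover the unique $(p,\tau)$ with $a_s=\tau+d(p,\cdot)$ on $U_C$ and set $a_s=\tau+d(p,\cdot)$ globally'' is not a data-driven operation: evaluating $d(p,\cdot)$ requires knowledge of the interior manifold $M$, which is precisely what is unknown (this proposition is later applied to \emph{two} manifolds $M_1,M_2$ with identical data, so the recipe must use only $Q(S)$ and the Riemannian structure of $\partial M$). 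At a crossing the graphs can be tangent to first order, so the $1$-jets you used for nowhere-density do not disambiguate the branches. The paper's resolution is exactly to go one order higher: Lemma~\ref{lma:tangent-order} shows distinct graphs never agree to second order, so one lifts the regular part of $Q(S)$ to the $2$-jet bundle over $\partial M\times\R$ (which is computable from $\partial M$ and $Q(S)$ alone), takes the closure --- where the lifted graphs $L\Gr(a_s)$ are now \emph{pairwise disjoint} --- and reads off the $\Gr(a_s)$ as the projections of the connected components. You would need this second-order separation, or an equivalent intrinsic gluing criterion, to close the gap; the nowhere-density estimate is necessary but not sufficient.
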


\subsection{Labeled Gromov--Hausdorff distance}

This subsection is devoted to the metric geometry we will use to compare our approximate reconstructions to the true manifold.

\begin{definition}
Let~$X$ and~$Y$ be compact metric spaces and~$L$ any set (understood as a set of labels).
Let $\alpha\colon L\to X$ and $\beta\colon L\to Y$ be any functions.
We define the labeled Gromov--Hausdorff distance between $(X,\alpha)$ and $(Y,\beta)$ to be
\begin{equation}
\begin{split}
\ghd{L}(X,\alpha;Y,\beta)
&=
\inf
\{
\hd{Z}(f(X),g(Y))
+
\sup_{\ell\in L}
d_Z(f(\alpha(\ell)),g(\beta(\ell)))
;\:
\\&
Z\text{ is a compact metric space},
\\&
f\colon X\to Z
\text{ and }
g\colon Y\to Z
\text{ are isometric embeddings}
\}.
\end{split}
\end{equation}
\end{definition}

Here~$\hd{Z}$ is the Hausdorff distance on the metric space~$Z$.
If the set of labels is empty, then this distance reduces to the usual Gromov--Hausdorff distance; in this context we understand the supremum of an empty set to be zero.

We will be comparing our discrete reconstruction to the manifold~$M$ with labels given by the known set $L=\partial M$, labeled by the inclusion $\iota\colon\partial M\to M$.
To make it meaningful to state that the labeled Gromov--Hausdorff distance is small, we must ensure that this is a well-behaved concept of distance.

The point of this definition is that in addition to getting a good metric approximation of the manifold in the sense of Gromov--Hausdorff distance, we want to get knowledge of how well the known boundary~$\partial M$ sits inside the approximation.
This is what the concept is designed to do.

\begin{proposition}
\label{prop:LGH}
The labeled Gromov--Hausdorff distance is symmetric and satisfies the triangle inequality.
Moreover,
\begin{equation}
\ghd{L}(X,\alpha;Y,\beta)=0
\end{equation}
if and only if there is an isometry $h\colon X\to Y$ so that $h\circ\alpha=\beta$.
\end{proposition}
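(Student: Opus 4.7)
The statement has three parts. Symmetry of $\ghd{L}$ is immediate because $(X,\alpha,f)$ and $(Y,\beta,g)$ enter the infimand symmetrically.

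For the triangle inequality, my plan is the standard gluing trick adapted to carry labels. Given a third labeled space $(W,\gamma)$ and $\eps>0$, I would choose near-optimal embeddings $f_1\colon X\to Z_1$, $g_1\colon Y\to Z_1$ for the pair $(X,Y)$ and $f_2\colon Y\to Z_2$, $g_2\colon W\to Z_2$ for the pair $(Y,W)$, then glue $Z_1$ and $Z_2$ by identifying $g_1(y)$ with $f_2(y)$ for every $y\in Y$, equipping the result with the standard quotient metric that routes cross distances through $Y$. The resulting compact space $Z$ contains isometric copies of $X$ and $W$. A two-step estimate gives
\begin{equation*}
\hd{Z}(f_1(X),g_2(W))\le\hd{Z_1}(f_1(X),g_1(Y))+\hd{Z_2}(f_2(Y),g_2(W)),
\end{equation*}
and for each $\ell\in L$ the triangle inequality in $Z$ routed through the identified point $g_1(\beta(\ell))=f_2(\beta(\ell))$ yields
\begin{equation*}
d_Z(f_1(\alpha(\ell)),g_2(\gamma(\ell)))\le d_{Z_1}(f_1(\alpha(\ell)),g_1(\beta(\ell)))+d_{Z_2}(f_2(\beta(\ell)),g_2(\gamma(\ell))).
\end{equation*}
Summing, taking suprema over $\ell$, and letting $\eps\to 0$ delivers the desired inequality.

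The characterization of vanishing distance has a trivial direction: if $h\colon X\to Y$ is an isometry with $h\circ\alpha=\beta$, take $Z=Y$, $f=h$, $g=\id_Y$. For the converse I would choose $Z_n,f_n,g_n$ so that both the Hausdorff term and the label supremum are below $1/n$, and for each $x\in X$ select $h_n(x)\in Y$ with $d_{Z_n}(f_n(x),g_n(h_n(x)))<1/n$. Triangle inequalities in $Z_n$ give $\abs{d_Y(h_n(x_1),h_n(x_2))-d_X(x_1,x_2)}<2/n$ and $d_Y(h_n(\alpha(\ell)),\beta(\ell))<2/n$ uniformly in $\ell\in L$. Fix a countable dense set $D\subset X$; by compactness of $Y$ and a diagonal argument, extract a subsequence $h_{n_j}$ converging pointwise on $D$ to a map that is isometric on $D$, and extend by uniform continuity to an isometric embedding $h\colon X\to Y$. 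The same construction with $X$ and $Y$ swapped gives an isometric embedding in the reverse direction; the standard fact that an isometric self-embedding of a compact metric space is surjective upgrades both to mutual isometries. The label compatibility $h\circ\alpha=\beta$ then follows because, for every $\ell$, $h_{n_j}(\alpha(\ell))$ converges to $h(\alpha(\ell))$ by the pointwise convergence argument and simultaneously to $\beta(\ell)$ by the uniform $(2/n_j)$ bound.

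The main obstacle is the converse direction: one must build a limit isometry from approximate embeddings that are selected by no canonical procedure, and then verify that the label condition survives the passage to the limit. The key observation that unlocks it is that the label supremum in the infimand gives a bound on $d_Y(h_n(\alpha(\ell)),\beta(\ell))$ which is uniform in $\ell$, so $h\circ\alpha=\beta$ falls out for free once $h$ has been produced by the usual Gromov-type extraction; the rest is classical metric geometry.
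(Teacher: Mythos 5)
Your proposal is correct, and for the zero-distance characterization it takes a genuinely different route from the paper's. The paper glues the near-optimal embeddings into a sequence of semimetrics on the disjoint union $X\sqcup Y$, extracts (by a diagonal argument over countable nets) a limiting semimetric $\delta$, and then defines the isometry $h$ implicitly by the condition $\delta(x,h(x))=0$; label compatibility follows because $\delta(\alpha(\ell),\beta(\ell))=0$ in the limit. You instead read an approximate isometry $h_n\colon X\to Y$ directly off each near-optimal embedding and pass to a pointwise limit on a dense countable set, obtaining $h$ as a map rather than as a limiting semimetric; label compatibility is then the comparison of the two limits of $h_{n_j}(\alpha(\ell))$. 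Both are classical variants of the Gromov--Hausdorff zero-distance argument. The semimetric route keeps the two spaces coupled symmetrically without committing to a direction; the approximate-isometry route builds the eventual map at each finite stage, which makes the surjectivity and label steps read more directly. One small point worth spelling out: you need $h_{n_j}(x)\to h(x)$ for every $x\in X$, not merely $x\in D$; this follows from the $(2/n_j)$-almost-isometry bound (almost-equicontinuity) combined with pointwise convergence on $D$ and isometry of $h$, and it is exactly what licenses the claim $h_{n_j}(\alpha(\ell))\to h(\alpha(\ell))$ since $\alpha(\ell)$ need not lie in $D$. For the triangle inequality the paper simply cites the reference, whereas you write out the gluing construction with the label supremum routed through the identified copy of $Y$; that is the same idea, just made explicit.
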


\subsection{Quantitative simplicity}

We will make approximate reconstructions of the whole manifold, and we can quantify the error of the reconstruction precisely once we have \emph{a priori} bounds on curvature, diameter, and similar geometric properties.

\begin{definition}
\label{def:bdd-geometry}
We say that a manifold~$M$ with boundary has bounded geometry with constants
$\Civ,\Cv,\Cvi,\Cvii,\Ci,\Cii,\Cviii,\Cxx,\Cxxi>0$
if the following properties hold:
\begin{enumerate}
\item
\label{bound:4}
The diameter of~$M$ is at most~$\Civ$.
\item
\label{bound:5}
The sectional curvature is always in the range $[-\Cv,\Cvi]$.
\item
\label{bound:6}
For any $x\in M$ and $\eta_1,\eta_2\in T_xM$ we have
\begin{equation}
\abs{\eta_1-\eta_2}
\leq
\Cvii d(\exp_x\eta_1,\exp_x\eta_2)
\end{equation}
as long as the exponentials are defined.
\item
\label{bound:1}
Every Jacobi field~$J$ with $J(0)=0$ along any unit speed geodesic satisfies
\begin{equation}
\frac{t}{2}\partial_t\abs{J(t)}^2
\leq
\Ci\abs{J(t)}^2
\end{equation}
for all $t>0$ for which the geodesic is defined.
\item
\label{bound:2}
If~$h_1$ and~$h_2$ are, respectively, the first and second fundamental forms of the boundary~$\partial M$, then $h_2\leq \Cii h_1$.
\item
\label{bound:7}
The distances on~$M$ and along~$\partial M$ satisfy
\begin{equation}
d_{M}(x,y)
\leq
d_{\partial M}(x,y)
\leq
\Cviii d_{M}(x,y)
\end{equation}
for all $x,y\in\partial M$.
\item
\label{bound:8}
Take any unit speed geodesic~$\gamma$ on~$M$ and let~$\rho$ be the distance function $\rho(x)=d(x,\gamma(0))$.
For any $t>0$, let $w\in T_{\gamma(t)}M$ be any vector orthogonal to~$\dot\gamma(t)$.
Then the Hessian~$\Hess_\rho$ of~$\rho$ satisfies
\begin{equation}
\ip{w}{\Hess_\rho w}
\geq
\abs{w}^2
(
\Cxx t^{-1}
-
\Cxxi
).
\end{equation}
\end{enumerate}
\end{definition}

We will derive a number of other estimates from these with constants depending on the ones listed here.
See Section~\ref{sec:density} for details.
The constants appearing in the derived estimates are all given in Appendix~\ref{app:const}.
We will refer to the appendix whenever a new constant is introduced in a claim.
All claims are given with simple constants, so there will be a number of relations between the various constants as is clear from the appendix.

It should be noted in part~\ref{bound:8} of the definition that~$w$ is a vector and~$\Hess_\rho w$ is a covector.
The inner product is their duality pairing, which we find most convenient to write as $\ip{w}{\Hess_\rho w}$ instead of $(\Hess_\rho w)(w)$ or $\Hess_\rho(w,w)$.

Having these estimates and constants is not an added assumptions, but merely a quantification of the simplicity of the manifold~$M$.
Our next result justifies that the constants of Definition~\ref{def:bdd-geometry} can well be called constants of quantitative simplicity.

\begin{proposition}
\label{prop:bdd-geom}
Every simple manifold has bounded geometry in the sense of Definition~\ref{def:bdd-geometry} for some constants.
\end{proposition}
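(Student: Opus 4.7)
I plan to verify each of the eight conditions in Definition~\ref{def:bdd-geometry} by combining three features of a simple manifold $M$: compactness, smooth geodesic dependence with no conjugate points on the closed maximal domain of every exponential map, and strict convexity of the boundary. Conditions \ref{bound:4}, \ref{bound:5}, and \ref{bound:2} are immediate from compactness: the diameter is finite, the sectional curvature is a continuous function on the compact Grassmann bundle of tangent two-planes and is therefore bounded, and on the compact manifold $\partial M$ the symmetric tensor $h_2$ satisfies $h_2 \leq \Cii h_1$ for $\Cii$ equal to the supremum of the eigenvalues of $h_2$ relative to $h_1$.

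For condition \ref{bound:6}, simplicity means that for each $x \in M$ the exponential map $\exp_x$ is a diffeomorphism from its compact maximal domain $\overline{D_x} \subset T_x M$ onto $M$. The set $\bigcup_{x \in M} \{x\} \times \overline{D_x} \subset TM$ is compact since the exit time depends continuously on the initial data and $M$ has bounded diameter. The resulting map $(x,\eta) \mapsto (x,\exp_x\eta)$ is a smooth diffeomorphism between compact manifolds-with-corners, so its inverse is uniformly Lipschitz; integrating along the straight segment from $\eta_1$ to $\eta_2$ in $T_x M$ yields $\Cvii$. For condition \ref{bound:7}, $d_M \leq d_{\partial M}$ follows because $\partial M$-curves are a subclass of $M$-curves, while the reverse ratio $d_{\partial M}(x,y)/d_M(x,y)$ extends continuously to all of $\partial M \times \partial M$ with value $1$ on the diagonal: in Riemannian normal coordinates on $M$ centred at $x$ the boundary is a graph over $T_x \partial M$ with vanishing first derivative, so both distances agree with $\abs{y-x}$ up to $O(\abs{y-x}^2)$. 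Compactness of $\partial M \times \partial M$ then gives the constant $\Cviii$.

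Conditions \ref{bound:1} and \ref{bound:8} are two aspects of the same Riccati analysis. Along any unit-speed geodesic $\gamma$, the absence of conjugate points implies that a Jacobi field $J$ with $J(0) = 0$ is nonzero for all $t \in (0, \exit(\gamma)]$, so the shape operator $U(t)$ on $\dot\gamma(t)^\perp$ defined by $U(t) J(t) = J'(t)$ is well-defined. It satisfies the matrix Riccati equation $U' + U^2 + R = 0$ together with the asymptotic $t U(t) \to \mathrm{Id}$ as $t \to 0^+$. Because there are no conjugate points on $\overline{D_x}$ and the domain $\{(x,v,t) : (x,v) \in SM,\ 0 \leq t \leq \exit(x,v)\}$ is compact (with $\exit$ continuous on $SM$), the map $(x,v,t) \mapsto U(t) - t^{-1}\mathrm{Id}$ extends continuously to $t = 0$ and is uniformly bounded. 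This yields both the lower bound $\ip{w}{\Hess_\rho w} \geq \abs{w}^2 (\Cxx t^{-1} - \Cxxi)$ of condition \ref{bound:8} (since $\Hess_\rho$ restricted to $\dot\gamma^\perp$ coincides with $U$) and, via the companion upper bound on $t \lambda_{\max}(U(t))$, the estimate $t \ip{J}{J'}/\abs{J}^2 = t \ip{J}{U(t)J}/\abs{J}^2 \leq \Ci$ of condition \ref{bound:1}. The general Jacobi field case for \ref{bound:1} reduces to this perpendicular case after the splitting $J(t) = c t \dot\gamma(t) + J_\perp(t)$, since the tangent part contributes exactly $1$ to the ratio.

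The main obstacle is justifying the uniform Riccati bounds up to the exit time. This requires both the absence of conjugate points on the closed maximal domain of every exponential map (preventing $U$ from blowing up as $t \to \exit$) and the continuous dependence of $\exit$ on $(x,v)$, which is a standard consequence of the strict convexity of $\partial M$. Once these are established, all remaining constants in Definition~\ref{def:bdd-geometry} are obtained as suprema of continuous functions over the compact domain.
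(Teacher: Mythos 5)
Your proposal is correct and covers all eight conditions, but for conditions~\ref{bound:1} and~\ref{bound:8} you take a genuinely different route from the paper. The paper fixes a global orthonormal frame, passes to normal coordinates centred at $\gamma(0)$, and writes $\Hess_\rho$ explicitly as $\frac{1}{|x|_e}A_{ij}\,\der x^i\otimes\der x^j$ with $A_{ij}(x,\gamma(0))=\delta_{ij}-\frac{x_ix_j}{|x|_e^2}-\CS{k}{i}{j}(x,\gamma(0))x_k$; the uniform bound then comes from the uniform Christoffel bound of Lemma~\ref{lma:Christoffel-bound}, which in turn rests on the global-logarithm Lemma~\ref{lma:global-logarithm}. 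You instead work intrinsically with the shape operator $U(t)$ along each unit-speed geodesic, observe that $U(t)-t^{-1}\mathrm{Id}$ extends continuously to $t=0$, and obtain uniform two-sided bounds on this correction from compactness of $\{(x,v,t):(x,v)\in SM,\ 0\le t\le\exit(x,v)\}$ and absence of conjugate points. Both routes hinge on the same two analytic facts (no conjugate points up to exit time, continuity of~$\exit$ on~$SM$); yours is cleaner in that it never introduces a frame or coordinates, while the paper's has the mild advantage that a single Christoffel supremum serves both parts~\ref{bound:1} and~\ref{bound:8}. The splitting $J=J^\parallel+J^\perp$ you invoke for part~\ref{bound:1} is the same one the paper uses.

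One small wrinkle in your write-up of condition~\ref{bound:6}: ``integrating along the straight segment from $\eta_1$ to $\eta_2$ in $T_xM$'' would bound $d(\exp_x\eta_1,\exp_x\eta_2)$ by a multiple of $|\eta_1-\eta_2|$, which is the reverse of the required inequality. The preceding sentence — that the inverse of $(x,\eta)\mapsto(x,\exp_x\eta)$ is uniformly Lipschitz on a compact domain — is the correct statement, and it alone delivers $\Cvii$ (this is precisely Lemma~\ref{lma:global-logarithm} in the paper, with the Sasaki metric supplying the fibre norm); the integration clause should simply be dropped.
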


\subsection{Approximate determination}

Theorems~\ref{thm:A} and~\ref{thm:B} state what can be determined exactly from our data.
We now turn to studying how well the whole manifold can be reconstructed with finite data.

The source set is as before.
The measurements at the boundary start at time $t=0$ and continue up to time $t=T$.
From this finite amount of data we can then draw an approximate conclusion about the geometry of our manifold.
The error can be quantified very concretely in terms of the geometric bounds.

The only points we know on the manifold are the finite number of sources that have produced a boundary signal on the time interval $[0,T]$ and the \emph{a priori} known boundary itself.
Therefore what we have is a discrete approximation of the smooth Riemannian manifold.
In addition to getting a metric approximation of the manifold itself, we need to approximately embed the known boundary~$\partial M$ into the approximation, and so we will use the Gromov--Hausdorff distance labeled by~$\partial M$.

Let us denote the set of spatial source points by $P=\pi(S)$.
To each $p\in P$ is associated a boundary distance function $r_p\colon\partial M\to\R$ given by $r_p(x)=d(x,p)$.
Let $c(p)\subset\partial M$ be the set of critical points of this function.
By compactness $\# c(p)\geq2$.

Consider a simple manifold of bounded geometry in the sense of Definition~\ref{def:bdd-geometry}.
At $x\in c(p)$ let $\lambda(p,x)$ be the smallest eigenvalue of the Hessian of~$r_p$ at~$x$.
This requires that~$\partial M$ is known as a Riemannian manifold, not just as a smooth one.
For $y\in c(p)$ we define
\begin{equation}
\label{eq:E1}
E(p,y)
=
\begin{cases}
\frac{\Ci}{\lambda(p,y)-\Cii}, & \text{when }\lambda(p,y)>\Cii\\
\infty, &\text{otherwise}
\end{cases}
\end{equation}
and for all $x\in\partial M$
\begin{equation}
\label{eq:E2}
E(x)
=
\inf_{p\in P,y\in c(p)}
\left(E(p,y)+d_{\partial M}(x,y)\right).
\end{equation}
Finally, we let
\begin{equation}
\label{eq:E3}
E
=
\sup_{x\in\partial M} E(x).
\end{equation}
These three types of~$E$s will be all determined by the data and the constants of bounded geometry.
Observe that if the Hessians of the distance functions are never large enough, all these~$E$s may well be infinite. 

For any $p,q\in P$ and $r,s>0$ we define the lentil
\begin{equation}
L^{p,q}_{r,s}
=
B(p,r)\cap B(q,s).
\end{equation}
The lentil is said to have thickness
\begin{equation}
\delta^{p,q}_{r,s}
=
r+s-d(p,q).
\end{equation}
One can check that on a simple manifold the thickness is the length of the segment of the geodesic~$\gamma_{p,q}$ between the two points that lies within the lentil, provided that $r,s<d(p,q)$.
To see this, observe that a point $\gamma_{p,q}(t)$ is in the lentil if and only if $t<r$ and $d(p,q)-t<s$.

Our inversion procedure starts with preprocessing the data.
Proposition~\ref{prop:separation} gives on aspect of it, and the next one says that the derived quantities introduced above are determined by the data.

\begin{proposition}
\label{prop:E&L}
Let~$M_1$ and~$M_2$ be simple Riemannian manifolds.
Let $\phi\colon\partial M_1\to\partial M_2$ be a smooth isometry.
Suppose that the two sets $S_i\subset \sisus(M_i)\times\R$ are discrete in $M_i\times\R$ and $\#\pi(S_1)\geq2$.
Let $Q(S_i)\subset\partial M_i\times\R$ be defined as above.
Let $\xi\colon S_1\to S_2$ be the bijection of Theorem~\ref{thm:A} and let $\Xi\colon P_1\to P_2$ be the map for which $\Xi\circ\pi_1=\pi_2\circ\xi$.
Then:
\begin{enumerate}
\item
The critical point sets correspond via $c(\Xi(p))=\phi(c(p))$ for all $p\in P_1$. The boundary distance quantities correspond via
$E_1(p,y)=E_2(\Xi(p),\phi(y))$ for all  $p\in P$ and $y \in c(p)$. Moreover
and
$E_1(x)=E_2(\phi(x))$
for all $x\in\partial M_1$ and
$E_1=E_2$.
\item
If we fix any $\eps_1>0$ and define
\begin{equation}
\label{eq:Gamma}
\Gamma_i
=
\{
p\in P_i
;\:
E_i(p,y)<\eps_1\text{ for some }y \in c(p)
\}
,
\end{equation}
then $\Gamma_2=\Xi(\Gamma_1)$.
\item
Fix any $\eps_1>0$ and $\delta>0$.
Define~$\Gamma_i$ for $i=1,2$ as above.
Take any $x_1,y_1\in\Gamma_1$ so that $d(x_1,y_1)>\delta$, and pick any $r\in(\delta,d_1(x_1,y_1))$.
Set $s=d_1(x_1,y_1)-r+\delta$ and 
let
\begin{equation}
L_1
=
L^{x_1,y_1}_{r,s}\subset M_1
\quad\text{and}
\quad
L_2
=
L^{x_2,y_2}_{r,s}\subset M_2,
\end{equation}
where $x_2=\Xi(x_1)$ and $y_2=\Xi(y_1)$.
Then
\begin{equation}
\Xi(L_1\cap P_1)
=
L_2\cap P_2
\end{equation}
and so
\begin{equation}
L_1\cap P_1\neq\emptyset
\quad\text{if and only if}\quad
L_2\cap P_2\neq\emptyset
.
\end{equation}
\end{enumerate}
\end{proposition}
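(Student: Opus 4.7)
The overall strategy is that all three parts read off directly from Theorem~\ref{thm:A}, with the only real technical point being how the critical-point and Hessian data of the boundary distance function $r_p(x)=d(p,x)$ transform under the boundary isometry $\phi$. Throughout, for $p\in P_1$ I write $r^1_p$ for the function $\partial M_1\ni x\mapsto d_1(p,x)$ and $r^2_{\Xi(p)}$ for the analogous function on $\partial M_2$.

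The key first observation is that Theorem~\ref{thm:A}\eqref{claim:v} is exactly the statement that for each $p\in P_1$ the function $r^1_p-r^2_{\Xi(p)}\circ\phi$ is constant on $\partial M_1$. Since additive constants affect neither the gradient nor the Hessian of a function on a Riemannian manifold, the intrinsic gradients and Hessians of $r^1_p$ and of $r^2_{\Xi(p)}\circ\phi$ on $\partial M_1$ coincide. Because $\phi$ is a Riemannian isometry of the boundaries, one has $\Hess_{\partial M_1}(r^2_{\Xi(p)}\circ\phi)_y=\phi^*\Hess_{\partial M_2}(r^2_{\Xi(p)})_{\phi(y)}$, and the pullback by an isometry preserves all eigenvalues. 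Combining these, the vanishing loci of the gradients correspond, giving $c(\Xi(p))=\phi(c(p))$, and the smallest eigenvalues match, giving $\lambda_1(p,y)=\lambda_2(\Xi(p),\phi(y))$. Substituting into \eqref{eq:E1} yields $E_1(p,y)=E_2(\Xi(p),\phi(y))$. Since $\phi$ is an isometry of the boundaries, the infimum in~\eqref{eq:E2} is preserved too, so $E_1(x)=E_2(\phi(x))$, and the supremum in~\eqref{eq:E3} then gives $E_1=E_2$, completing part~(1).

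Part~(2) is an immediate consequence, because the defining conditions for $\Gamma_i$---namely $p\in P_i$ with $E_i(p,y)<\eps_1$ for some $y\in c(p)$---are both transported by $\Xi$ and $\phi$ by part~(1). For part~(3) I use only Theorem~\ref{thm:A}\eqref{claim:ii}, which says $d_1(p,q)=d_2(\Xi(p),\Xi(q))$ for all $p,q\in P_1$. For any candidate $z\in P_1$, the membership $z\in L^{x_1,y_1}_{r,s}\cap P_1$ is the conjunction of $d_1(z,x_1)<r$ and $d_1(z,y_1)<s$, which by Theorem~\ref{thm:A}\eqref{claim:ii} is equivalent to $d_2(\Xi(z),x_2)<r$ and $d_2(\Xi(z),y_2)<s$, i.e., $\Xi(z)\in L^{x_2,y_2}_{r,s}\cap P_2$. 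Hence $\Xi(L_1\cap P_1)=L_2\cap P_2$, and the non-emptiness equivalence is immediate. The main subtle point in the whole argument is the Hessian transformation in part~(1), which relies essentially on $\phi$ being a Riemannian (and not merely smooth) isometry; everything else is a direct substitution.
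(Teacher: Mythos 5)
Your proof is correct and follows essentially the same line of reasoning as the paper, though you present it slightly more explicitly. The paper's proof invokes Proposition~\ref{prop:separation} to say the data determines the arrival time functions $a_s$ (hence, modulo constants, the boundary distance functions $r_s$, their differentials, and their Hessians), and then concludes the $E$-quantities and $\Gamma$ are determined; for part~(3) it cites Theorem~\ref{thm:A} for the distances. You instead start one step downstream, reading claim~\eqref{claim:v} of Theorem~\ref{thm:A} as the statement that $r^1_p - r^2_{\Xi(p)}\circ\phi$ is constant, and then make explicit the transport rule $\Hess_{\partial M_1}(r^2_{\Xi(p)}\circ\phi)_y=\phi^*\Hess_{\partial M_2}(r^2_{\Xi(p)})_{\phi(y)}$ together with spectrum-preservation by the isometric pullback. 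This spells out the step the paper leaves implicit behind the phrase ``the data determines the critical points ... and the function $E(p,y)$,'' and it correctly identifies where the hypothesis that $\phi$ is a \emph{Riemannian} isometry (not merely a diffeomorphism) is used. Part~(3) is handled in both via claim~\eqref{claim:ii} of Theorem~\ref{thm:A}. No gaps; the two arguments are the same in substance.
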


Now that we know the data to determine the relevant auxiliary quantities uniquely, we move to working on a single manifold.
The next theorem states that under known bounds on the geometry and metrically known boundary one can estimate directly, from the data, how dense the source points are. For our purposes this is to assign approximate boundary points, and estimate the metric distance of an approximate reconstruction from the true manifold.

In practice, the set~$P$ in the next theorem stands for the spatial source set~$\pi(S)$ as above, but the claim is valid for any subset of~$\sisus(M)$.

\begin{theorem}
\label{thm:C23}
Let~$M$ be simple a Riemannian manifold with the constants of Definition~\ref{def:bdd-geometry} (by Proposition~\ref{prop:bdd-geom}) satisfying the condition~\eqref{eq:diameter-curvature-bound}, and let $P\subset\sisus(M)$ be any 
set.
This theorem uses the constants given in
equations~\eqref{eq:c9}, \eqref{eq:c10}, \eqref{eq:c11}, and~\eqref{eq:c12}.

Fix any $\eps_1>0$ and define the set $\Gamma\subset P$ as in~\eqref{eq:Gamma}.
Set $\eps_2=\eps_1+E$ and $\delta=\Cix\eps_2$.
Suppose that for every $x,y\in\Gamma$ with $d(x,y)>\delta$ and $r\in(\delta,d(x,y))$ we have
\begin{equation}
\label{eq:lentil-intersect-C}
L^{x,y}_{r,d(x,y)-r+\delta}\cap P\neq\emptyset.
\end{equation}
Then:
\begin{enumerate}
\item
\label{claim:2}
The 
set~$P$ is $\eps$-dense in~$M$ with
\begin{equation}
\label{eq:eps<eps2+reps2}
\eps
=
\Cx\eps_2 
+
\Cxi\sqrt{\eps_2}.
\end{equation}
\item
\label{claim:3}
Given any $\alpha\colon\partial M\to P$ so that
\begin{equation}
\label{eq:alpha-choice}
E(x)
+\eps_1
\geq
\inf_{y\in c(\alpha(x))}
\left(E(\alpha(x),y)+d_{\partial M}(x,y)\right)
\end{equation}
for all $x \in \p M$,
the pair $(P,\alpha)$ is close to $(M,\iota)$, with the inclusion $\iota\colon\partial M\to M$, in the sense of the labelled Gromov--Hausdorff distance:
\begin{equation}
\label{eq:ghd<eps2+reps2}
\ghd{\partial M}(P,\alpha;M,\iota)
\leq
\Cxii\eps_2
+
\Cxi\sqrt{\eps_2}.
\end{equation}
\end{enumerate}
\end{theorem}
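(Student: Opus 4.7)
\textbf{Proof plan for Theorem~\ref{thm:C23}.}
The plan is to pass through three geometric facts that turn the data-determined quantities $E$, $\Gamma$, and the lentil hypothesis into a quantitative covering estimate in $M$. The first fact is a Hessian-to-distance bound: at any critical point $y\in c(p)$ of $r_p|_{\partial M}$, the outward unit normal at $y$ coincides with $\nabla r_p$, so
\begin{equation}
\Hess(r_p|_{\partial M})
=\Hess_{r_p}|_{T\partial M}-h_2.
\end{equation}
Condition~\ref{bound:1} of Definition~\ref{def:bdd-geometry}, applied to the Jacobi-field identity $\Hess_{r_p}(J,J)=\tfrac12\partial_t|J|^2$ along the geodesic from $p$ to $y$, gives the upper bound $\Hess_{r_p}|_{\dot\gamma^\perp}\le (\Ci/t)g$ with $t=d(p,y)$. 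Since $h_2\ge 0$, this forces $\lambda(p,y)\le \Ci/t$ and hence $d(p,y)\le E(p,y)$. In particular every $p\in\Gamma$ sits within $\eps_1\le\eps_2$ of some boundary critical point.

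Using the hypothesis on $\alpha$, for each $x\in\p M$ we may select $y\in c(\alpha(x))$ with $E(\alpha(x),y)+d_{\p M}(x,y)\le E(x)+\eps_1\le \eps_2$, so that $d(\alpha(x),y)\le \eps_2$ and $d_M(x,y)\le d_{\p M}(x,y)\le \eps_2$. Hence $d_M(\alpha(x),x)\le 2\eps_2$, which immediately handles any $q\in\p M$ in the density claim and also supplies the labelling contribution needed for Part~\ref{claim:3}. For $q\in\sisus(M)$, I shoot the maximal geodesic $\gamma_0$ through $q$ to the boundary at two points $x_0,y_0$ (possible because $M$ is simple), and use the preceding remark to pick $x_1,y_1\in\Gamma$ within $O(\eps_2)$ of $x_0,y_0$ respectively. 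Quantitative endpoint-stability of the geodesic map (condition~\ref{bound:6} combined with the diameter bound~\ref{bound:4}) then shows that the geodesic $\gamma_1$ from $x_1$ to $y_1$ stays $O(\eps_2)$-close to $\gamma_0$, and the parameter $r_0$ corresponding to $q$ along $\gamma_0$ lifts to a point on $\gamma_1$ within $O(\eps_2)$ of $q$.

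The last step invokes the lentil hypothesis: choosing $r$ near $r_0$ and setting $s=d(x_1,y_1)-r+\delta$, \eqref{eq:lentil-intersect-C} produces a $p\in P\cap L^{x_1,y_1}_{r,s}$. A comparison argument in the space form of curvature $\Cvi$ (allowed by~\ref{bound:5} and~\eqref{eq:diameter-curvature-bound}) shows that a point in a lentil of thickness $\delta$, whose poles are a distance $\le \Civ$ apart, lies within $C\sqrt{\delta}$ of the polar geodesic; with $\delta=\Cix\eps_2$ this contributes the $\Cxi\sqrt{\eps_2}$ term. Assembling the three errors gives $d(p,q)\le \Cx\eps_2+\Cxi\sqrt{\eps_2}$, proving Part~\ref{claim:2}. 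For Part~\ref{claim:3} I take $Z=M$ with $f\colon P\hookrightarrow M$ the inclusion and $g=\id_M$, so that $\hd{M}(f(P),g(M))\le \Cx\eps_2+\Cxi\sqrt{\eps_2}$ by what we just proved and $\sup_{x\in\p M}d_M(\alpha(x),x)\le 2\eps_2$ by the preceding paragraph; summing yields~\eqref{eq:ghd<eps2+reps2} with $\Cxii=\Cx+2$.

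The main obstacle is the quantitative conversion in Step~3: pairing endpoint stability of the geodesic $\gamma_0\mapsto\gamma_1$ with the lentil-width-to-transverse-distance estimate in the presence of positive upper curvature. Both ingredients must be bookkept against the bounded-geometry constants of Definition~\ref{def:bdd-geometry}, and the $\sqrt{\eps_2}$ rate (rather than $\eps_2$) in the final bound is intrinsic to the second-order tangency of two balls along their common chord, so it is genuinely what caps the quality of the reconstruction, not an artefact of the proof.
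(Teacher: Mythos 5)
Your overall strategy---boundary density via a Hessian bound, interior density via geodesics between near-boundary $\Gamma$-points covered by lentils, and a curvature-comparison estimate for the lentil diameter---matches the paper's proof in broad outline. But there are three concrete gaps that prevent the proposal from closing.

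First, the Gauss formula has the wrong sign. The paper's equation~\eqref{eq:hess=hess+sff} reads $\Hess_r = \Hess_\rho + h_2$, not $\Hess_\rho - h_2$; at a critical point of $r_p|_{\partial M}$ the gradient points along the \emph{outward} normal, producing $+h_2$. With the correct sign you need the quantitative bound $h_2 \leq \Cii h_1$ of Definition~\ref{def:bdd-geometry}, and the resulting estimate is $\lambda(p,y) - \Cii \leq \Ci/d(p,y)$. That is precisely why $E(p,y)$ is defined with $\lambda - \Cii$ in the denominator; your version removes the $\Cii$ dependence and does not actually recover the quantity $E(p,y)$ used in the statement.

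Second, you lose a factor of two in the boundary labelling estimate. Choosing $y \in c(\alpha(x))$ with $E(\alpha(x),y) + d_{\partial M}(x,y) \leq E(x) + \eps_1 \leq \eps_2$ already controls the \emph{sum} $d(\alpha(x),y) + d(x,y)\leq\eps_2$, and the triangle inequality gives $d(\alpha(x),x) \leq \eps_2$, not $2\eps_2$. You bound each summand by $\eps_2$ separately and add, landing at $\Cxii = \Cx + 2$, which does not match the paper's $\Cxii = \Cx + 1$ from~\eqref{eq:c12}; as a result the proposal does not actually prove inequality~\eqref{eq:ghd<eps2+reps2} with the stated constants.

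Third, and most seriously, your case split is $q\in\partial M$ versus $q\in\sisus(M)$, and the lentil argument is claimed to cover the latter in full. It cannot: for $q\in\sisus(M)$ close to $\partial M$, whichever geodesic $\gamma_1$ between $x_1,y_1\in\Gamma$ you construct, the nearest point $z'$ on $\gamma_1$ to $q$ is close to an endpoint, and you cannot place a lentil of thickness $\delta$ with $z'$ as its midpoint since that requires $d(x_1,z')>\tfrac12\delta$ and $d(y_1,z')>\tfrac12\delta$. The paper handles this by splitting at a quantitative threshold $d(z,\partial M)\gtrless\eps_1+\Cxvii\eps_2$ (see Lemma~\ref{lma:lentils-cover}) and applying the boundary estimate to all points below the threshold, not just points on $\partial M$. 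Relatedly, you perturb \emph{both} endpoints $x_0,y_0$ of the ambient geodesic, whereas the paper's Lemma~\ref{lma:dense-geodesics} fixes one endpoint at some $x\in\Gamma$ and perturbs only the far one, which is exactly the form needed to invoke the stability estimate of Lemma~\ref{lma:geodesic-diversion} (geodesics with a common starting point). Your version would require a two-endpoint comparison lemma that is not in the paper's toolbox and that you have not supplied.
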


A map~$\alpha$ satisfying~\eqref{eq:alpha-choice} will be constructed explicitly from the data.
This is explained in the proof of the theorem in Section~\ref{sec:thm-C-pf}.

The first part of this theorem says that under an additional curvature assumption the quantities defined by the data (due to Proposition~\ref{prop:E&L}) give an explicit estimate for the density of the source points.
The second part is an estimate of the labeled Gromov--Hausdorff distance between the approximating discrete space~$P$ with the approximate boundary described by~$\alpha$ and the true manifold~$M$ with boundary~$\partial M$.
The key assumption is that all lentils of thickness~$\delta$, which is proportional to~$\eps_2$, contain a source point, and by Proposition~\ref{prop:E&L} this is determined uniquely by the data.

The quality of the estimates given in claim~\ref{claim:2} of Theorem~\ref{thm:C23} depend on the choice of~$\eps_1$.
When faced with a practical reconstruction task, one ought to vary~$\eps_1$ and see which choice gives the best error bound~$\eps_2$ while still having non-empty lentils.
The only parameter we choose is~$\eps_1$; the second parameter~$\eps_2$ which plays a more central role is based on the observed quantity~$E$ of~\eqref{eq:E3}.
Heuristically,~$\eps_1$ describes how close the chosen source points are to the boundary and~$\eps_2$ how far from any given boundary point these near-boundary sources are.
These constants are then used to establish estimates for interior density of the sources.

The estimates of claim~\ref{claim:2} of Theorem~\ref{thm:C23} are vacuous if $\eps_2=\infty$.
For the theorem to be meaningful, we need to ensure that for some~$\eps_1$ we indeed have $\eps_2<\infty$.
To be able to get an arbitrarily good estimate for density and metric distance, we also need to ensure that~$\eps_2$ will become as small as we like if the source set $P\subset M$ is dense enough.
This is a reverse density estimate:
If the source set is very dense, it will appear dense by our estimates.

\begin{theorem}
\label{thm:reverse}
Let~$M$ be a simple manifold of bounded geometry
with the constants of Definition~\ref{def:bdd-geometry} (by Proposition~\ref{prop:bdd-geom}) satisfying the condition~\eqref{eq:diameter-curvature-bound}
and $S\subset\sisus(M)\times\R$ a discrete source set.
In this theorem we use the constants of~\eqref{eq:c19}, \eqref{eq:c25}, \eqref{eq:c26}, \eqref{eq:c27}, \eqref{eq:c12}, and~\eqref{eq:c11}.

Take any $\eps>0$.
If $P=\pi(S)\subset M$ is $\hat\eps$-dense with
\begin{equation}
\label{eq:hateps}
\hat\eps
=
\min
\left(
\Cxxv
,
\Cxxvi\eps
,
\Cxxvii\eps^2
\right)
,
\end{equation}
and $\eps_1=\Cxix\hat\eps$,
then:
\begin{enumerate}
\item
The quantity $\eps_2=\eps_1+E$ as used in Theorem~\ref{thm:C23} satisfies
\begin{equation}
\label{eq:eps2<eps}
\Cxii\eps_2
+\Cxi\sqrt{\eps_2}
<
\eps.
\end{equation}
\item
Denote $\delta=\Cix\eps_2$.
For any $x,y\in P$ that are more than distance~$\delta$ apart and any $r\in(\delta,d(x,y))$ the lentil
$L^{x,y}_{r,d(x,y)-r+\delta}$ contains an element of~$P$.
\item
The set~$P$ satisfies the assumptions of Theorem~\ref{thm:C23} for the given choice of~$\eps_1$; especially the lentil intersection property~\eqref{eq:lentil-intersect-C} holds.
Therefore 
one can conclude from the data set~$Q(S)$ and the geometric constants alone that~$P$ is $\eps$-dense and the discrete approximation is $\eps$-good.
\end{enumerate}
\end{theorem}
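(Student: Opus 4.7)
The plan is to exploit the assumed $\hat\eps$-density of $P$ to bound the data-derived quantity $E$ from~\eqref{eq:E3} linearly in $\hat\eps$, and then invoke Theorem~\ref{thm:C23}. Fix $x\in\partial M$; by density there exists $p\in P$ with $d(x,p)<\hat\eps$. Let $y\in\partial M$ minimize $r_p$; by strict convexity this nearest-point footpoint exists, lies in $c(p)$, and satisfies $d(p,y)\leq d(p,x)<\hat\eps$. At $y$ the gradient $\nabla r_p$ is a unit vector normal to $\partial M$, so $T_y\partial M$ is orthogonal to the geodesic from $p$ to $y$. Property~\ref{bound:8} therefore bounds the ambient Hessian of $r_p$ from below on $T_y\partial M$ by $\Cxx/d(p,y)-\Cxxi$, while property~\ref{bound:2} shows that passing to the intrinsic Hessian on $\partial M$ costs at most $\Cii$; combining, $\lambda(p,y)-\Cii\geq \Cxx/d(p,y)-\Cxxi-2\Cii$. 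Choosing the absolute ceiling $\Cxxv$ small enough that this quantity is at least $\Cxx/(2\,d(p,y))$ whenever $d(p,y)\leq\hat\eps\leq\Cxxv$ yields $E(p,y)\leq 2\Ci\Cxx^{-1}\hat\eps$ by~\eqref{eq:E1}. Combined with $d_{\partial M}(x,y)\leq \Cviii(d(x,p)+d(p,y))\leq 2\Cviii\hat\eps$ from property~\ref{bound:7}, formula~\eqref{eq:E2} gives $E(x)\leq K\hat\eps$ for a constant $K$ depending only on the bounded-geometry constants; taking the supremum over $x$ yields $E\leq K\hat\eps$.

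Claim~(1) then follows by splitting $\eps$ in halves. Since $\eps_2=\Cxix\hat\eps+E\leq(\Cxix+K)\hat\eps$, the linear term $\Cxii\eps_2$ is below $\eps/2$ as soon as $\hat\eps\leq\eps/(2\Cxii(\Cxix+K))$, and the square-root term $\Cxi\sqrt{\eps_2}$ is below $\eps/2$ as soon as $\hat\eps\leq\eps^2/(4\Cxi^2(\Cxix+K))$; these two inequalities, together with the ceiling $\hat\eps\leq\Cxxv$ used in the $E$-bound above, reproduce the three conditions in~\eqref{eq:hateps}. Claim~(2) is a pure density argument. Given $u,v\in P$ with $d(u,v)>\delta$ and $r\in(\delta,d(u,v))$, the minimizing geodesic from $u$ to $v$ traverses the lentil $L^{u,v}_{r,d(u,v)-r+\delta}$ exactly on the parameter interval $(r-\delta,r)$, as observed after the definition of thickness. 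Let $q$ be the midpoint of this segment, so $d(u,q)=r-\delta/2$ and $d(v,q)=d(u,v)-r+\delta/2$. By $\hat\eps$-density there is $w\in P$ with $d(q,w)<\hat\eps$; the triangle inequality then places $w$ strictly inside the lentil as soon as $\hat\eps<\delta/2$. Since $\delta=\Cix\eps_2\geq\Cix\Cxix\hat\eps$, it suffices to arrange $\Cix\Cxix\geq 2$, which is accomplished by the choice of constants in Appendix~\ref{app:const}.

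Claim~(3) is now immediate: $\eps_2$ is finite by claim~(1), the lentil intersection hypothesis~\eqref{eq:lentil-intersect-C} of Theorem~\ref{thm:C23} is exactly claim~(2), and Proposition~\ref{prop:E&L} identifies all geometric quantities with data-computable ones, so Theorem~\ref{thm:C23} delivers the $\eps$-density of $P$ and the labeled Gromov--Hausdorff bound. I expect the main technical obstacle to be the Hessian comparison used for the $E$-bound: one must rigorously descend from the ambient Hessian lower bound in property~\ref{bound:8} to the intrinsic Hessian of $r_p|_{\partial M}$ at the footpoint, tracking the correct sign and size of the second-fundamental-form correction controlled via $h_2\leq\Cii h_1$, and verifying that the nearest-point footpoint really belongs to $c(p)$. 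A secondary bookkeeping task is to ensure that the three bounds on $\hat\eps$ and the ratio $\Cix\Cxix\geq 2$ can be realized consistently with the values fixed in Appendix~\ref{app:const}.
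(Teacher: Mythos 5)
Your overall strategy matches the paper exactly: bound $E$ linearly in $\hat\eps$ via a Hessian lower bound for the boundary distance function at the footpoint, feed $\eps_2 \lesssim \hat\eps$ into the two requirements $\Cxii\eps_2 < \eps/2$ and $\Cxi\sqrt{\eps_2} < \eps/2$, observe that each lentil of thickness $\delta$ contains a ball of radius $\delta/2$ (this is Lemma~\ref{lma:trans-radius-and-ball}, which your inline triangle-inequality computation reproduces), and use $\Cix\Cxix = 2$ so that $\hat\eps \leq \delta/2$. Your treatment of claims~(2) and~(3) and the final constant algebra are correct.

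The one real issue, which you flagged yourself as a likely obstacle, is in the Hessian comparison. The Gauss formula~\eqref{eq:hess=hess+sff} gives $\Hess_r = \Hess_\rho + h_2$, and by simplicity $h_2$ is \emph{positive} definite (strict convexity of the boundary). Therefore passing from $\Hess_\rho$ to $\Hess_r$ does not cost anything; it only helps:
\begin{equation}
\Hess_r(y) \;=\; \Hess_\rho(y) + h_2(y) \;\geq\; \Hess_\rho(y) \;\geq\; \bigl(\Cxx t^{-1} - \Cxxi\bigr) h_1(y),
\end{equation}
so $\lambda(p,y) \geq \Cxx / d(p,y) - \Cxxi$, with no $-\Cii$ term. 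Property~\ref{bound:2}, $h_2 \leq \Cii h_1$, is the \emph{upper} bound on $h_2$ and is what forces the $-\Cii$ inside the definition of $E(p,y)$ in~\eqref{eq:E1} (it was used in the proof of Lemma~\ref{lma:d<E}, the opposite-direction estimate); it plays no role in the lower bound you need here. Your chain $\lambda(p,y)-\Cii \geq \Cxx/d(p,y)-\Cxxi-2\Cii$ has one spurious $\Cii$ and, as a consequence, your ceiling requirement on $\hat\eps$ becomes $\hat\eps \leq \Cxx/\bigl(2(\Cxxi+2\Cii)\bigr)$ rather than the $\Cxxv = \Cxx/\bigl(2(\Cxxi+\Cii)\bigr)$ of~\eqref{eq:c25}. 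With the constants fixed in Appendix~\ref{app:const} your argument therefore leaves a gap for $\hat\eps$ in the interval $\bigl(\Cxx/(2(\Cxxi+2\Cii)),\,\Cxxv\bigr]$; dropping the unnecessary $-\Cii$ closes it and recovers the stated theorem.
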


Choosing~$\eps_1$ correctly is important for Theorem~\ref{thm:reverse}.
If~$\eps_1$ is too large, the resulting estimates are not tight enough.
If it is too small, then too few boundary sources are identified and not all lentils contain source points.

\subsection{Convergence of approximations}

The next theorem treats the case of gradually acquired and interpreted data.
Assuming the sources are eventually dense, a finite amount of data determines an approximate manifold.
The data itself gives an estimate for how good the approximation is and the approximations converge to the true manifold with boundary in the labeled Gromov--Hausdorff sense.

\begin{theorem}
\label{thm:D}
Let~$M$ be a simple Riemannian manifold with geometry bounded in the sense of Definition~\ref{def:bdd-geometry} satisfying the condition~\eqref{eq:diameter-curvature-bound}.
Let $S\subset\sisus(M)\times\R$ be a countable discrete set of sources so that the projection $\pi(S\cap(M\times[0,\infty)))\subset M$ is dense.

Then for each $T>0$ the boundary $\p M$ as a Riemannian manifold and the set
\begin{equation}
\label{eq:Q(S,T)}
Q(S,T)
=
Q(S)\cap(\partial M\times[0,T])
\end{equation}
determine a finite metric space~$M_T$ and a map $\alpha_T\colon\partial M\to M_T$ so that
\begin{equation}
\lim_{T\to\infty}
\ghd{\partial M}(M_T,\alpha_T;M,\iota)
=
0
.
\end{equation}
Thus $(M_T,\alpha_T)\to(M,\iota)$ as $T\to\infty$ in the labeled Gromov--Hausdorff sense.
\end{theorem}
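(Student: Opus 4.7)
The plan is to set $M_T = P_T$, the finite metric space consisting of the spatial projections of those sources whose entire arrival time function has been observed in $[0,T]$, equipped with the pairwise distances supplied by Theorem~\ref{thm:A}; and to let $\alpha_T\colon \partial M \to M_T$ be the map produced by Theorem~\ref{thm:C23} once $T$ is large enough that its hypotheses are satisfied (for smaller $T$ one may fix any convenient default, e.g.\ a one-point space with a constant map). Convergence as $T \to \infty$ will be reduced to Theorem~\ref{thm:reverse} applied to $P_T$ in place of $P$, with a parameter $\eps_1 = \eps_1(T)$ tending to zero.

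The first step is the algorithmic identification of the fully-visible sources from $Q(S,T)$. Adapting the reasoning behind Proposition~\ref{prop:separation} to the open strip $\Omega = \partial M \times (0, T)$ and to the restricted data set $Q(S,T)$, one decomposes the point cloud into the connected components of $\Gr(a_s) \cap \Omega$ for the various $s \in S$. A source $s$ is \emph{fully visible} in $[0,T]$, meaning $\Gr(a_s) \subset \Omega$, precisely when its components together form the graph of a single-valued smooth function defined on all of $\partial M$ with values strictly inside $(0,T)$, a property detectable from the data and the known Riemannian structure of $\partial M$. Let $P_T$ be the set of spatial projections $\pi(s)$ obtained. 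Theorem~\ref{thm:A} applied with $M_1 = M_2 = M$, $\phi = \id$ and the fully-visible subset of $S$ in both roles then supplies the pairwise distances on $P_T$, the critical-point sets $c(p)$, the Hessian eigenvalues $\lambda(p,y)$, and hence all the quantities $E(p,y)$, $E(x)$, $E$ and the lentil intersections of Proposition~\ref{prop:E&L}.

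Next I would establish that $P_T$ becomes dense in $M$ as $T \to \infty$. Every source $s \in S$ with $0 < \tau(s) < T - \Civ$ is fully visible in $[0,T]$, since $a_s(x) \in [\tau(s), \tau(s) + \diam(M)] \subset (0, T)$, so $P_T$ contains the projection of every such source. The hypothesis that $\pi(S \cap (M \times [0, \infty)))$ is dense in the compact manifold $M$, combined with discreteness of $S$ (which allows only finitely many sources to have $\tau(s)=0$), implies that for any $\hat\eps > 0$ there is a finite $\hat\eps$-net inside $\pi(S \cap (M \times (0, \infty)))$. All its elements lie in $P_T$ once $T$ exceeds their largest $\tau$-coordinate plus $\Civ$, so there is a threshold $T_0(\hat\eps)$ for which $P_T$ is $\hat\eps$-dense whenever $T \geq T_0(\hat\eps)$. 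Given $\eps > 0$, I then choose $\hat\eps = \hat\eps(\eps)$ according to~\eqref{eq:hateps} and set $\eps_1 = \Cxix \hat\eps$: Theorem~\ref{thm:reverse} applied to $P_T$ verifies the lentil-intersection condition~\eqref{eq:lentil-intersect-C}, whereupon Theorem~\ref{thm:C23} produces an explicit $\alpha_T$ with
\[
\ghd{\partial M}(M_T, \alpha_T; M, \iota)
\leq
\Cxii \eps_2 + \Cxi \sqrt{\eps_2}
<
\eps
\]
for all $T \geq T_0(\hat\eps)$, completing the proof.

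The step I expect to require most care is the extraction of $P_T$ and its derived invariants from the unlabeled truncated point cloud $Q(S,T)$: one must verify that the separation argument behind Proposition~\ref{prop:separation} goes through when only time-restricted data are available, so that each fully-visible arrival graph really can be singled out and its source identified. Once this bookkeeping is in place, Theorems~\ref{thm:A}, \ref{thm:C23}, \ref{thm:reverse} and the reverse-density argument above assemble directly into the claimed labeled Gromov--Hausdorff limit, with no additional geometric input needed.
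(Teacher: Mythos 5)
Your proposal is correct and follows essentially the same approach as the paper: restrict the data to the strip $\partial M\times(0,T)$, use Proposition~\ref{prop:separation} to isolate those sources whose full arrival-time graph is visible (which by the diameter bound includes all $s$ with $0<\tau(s)<T-\Civ$), set $M_T=P_T$ with distances from Theorem~\ref{thm:A}, handle the trivial case $P_T=\emptyset$ with a one-point default, and then let $T\to\infty$ so that $P_T$ becomes $\hat\eps$-dense, invoking Theorem~\ref{thm:reverse} to verify the hypotheses of Theorem~\ref{thm:C23} and obtain the $\eps$-bound on the labeled Gromov--Hausdorff distance. The only additions beyond the paper's argument are a few sensible bookkeeping remarks (discreteness excluding infinitely many $\tau(s)=0$ sources, the detectability of full visibility from the truncated data), which are consistent with the paper's intent.
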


Finally, we point out that if the geometric assumptions of Theorem~\ref{thm:D} are satisfied, then the assumptions on the sources are almost certainly satisfied in a simple stochastic model.
That is, for almost any source pattern a finite amount of data determines an approximation and the approximation converges to the true solution with increasing data in a manner quantifiable from data itself.

We say that two measures are uniformly comparable when they are absolutely continuous with respect to each other and the Radon--Nikodym derivatives both ways are bounded.
Let~$\nu$ be the product of the volume measure of~$M$ and the Lebesgue measure of~$\R$, and let~$\mu$ be a measure on $M\times\R$ uniformly comparable with~$\nu$.
A homogeneous Poisson point process on the spacetime $M\times\R$ with intensity $\lambda>0$ is such a point process that the number of points in the set $A\subset M\times\R$ follows the Poisson distribution with parameter~$\lambda\bar\mu(A)$.

\begin{proposition}
\label{prop:poisson}
Let~$M$ be a simple Riemannian manifold with bounded geometry satisfying~\eqref{eq:diameter-curvature-bound}.
Let~$\mu$ be a measure on $M\times\R$ which is uniformly comparable with the natural product measure~$\nu$.
Suppose the set $S\subset M\times\R$ is given by a homogeneous Poisson point process with intensity $\lambda>0$ and the measure~$\mu$.

Then almost surely:
\begin{enumerate}
\item $S$ is countable and discrete,
\item the projection $\pi(S\cap(M\times[0,\infty)))$ is contained in~$\sisus(M)$ and dense in~$M$, and therefore
\item the conclusions of Theorem~\ref{thm:D} hold.
\end{enumerate}
\end{proposition}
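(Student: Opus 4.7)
The plan is to reduce each claim to standard properties of a Poisson point process with intensity measure $\lambda\mu$: a measurable set $A\subset M\times\R$ contains zero points almost surely iff $\mu(A)=0$, and contains infinitely many points almost surely iff $\mu(A)=\infty$; more generally the number of points in $A$ is finite almost surely iff $\mu(A)<\infty$. Uniform comparability of $\mu$ with $\nu$ (the product of the Riemannian volume on $M$ and Lebesgue measure on $\R$) means that $\mu$ and $\nu$ have the same null sets and the same sets of infinite measure, so throughout the argument I can work interchangeably with $\mu$ or $\nu$.

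First I would establish countability and discreteness. Since $M$ is compact and $\mu$ is comparable to $\nu$, each slab $M\times[-n,n]$ has finite $\mu$-measure, so $S\cap(M\times[-n,n])$ is almost surely finite. Taking a countable intersection over $n\in\N$, almost surely every such slab contains only finitely many points; this simultaneously yields that $S$ is at most countable and that $S$ has no accumulation point in $M\times\R$ (since any accumulation point would lie in some slab with infinitely many nearby points).

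Next I would show that almost surely $\pi(S\cap(M\times[0,\infty)))\subset\sisus(M)$ and is dense in $M$. For the interior inclusion, the boundary set $\partial M\times[0,\infty)$ has $\nu$-measure zero because $\partial M$ is a volume-zero subset of $M$; hence $\mu(\partial M\times[0,\infty))=0$ and the probability that $S$ has any point there vanishes. For density, fix a countable basis $\{U_k\}$ of open sets in $M$. Each set $U_k\times[0,\infty)$ has positive volume in each slab $U_k\times[0,n]$, so $\nu(U_k\times[0,\infty))=\infty$ and likewise $\mu(U_k\times[0,\infty))=\infty$. Consequently $S$ almost surely contains at least one point in each $U_k\times[0,\infty)$; intersecting over the countable collection of $U_k$ gives an almost sure event on which $\pi(S\cap(M\times[0,\infty)))$ meets every basic open set and is therefore dense.

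Finally, on the almost sure event that (1) and (2) hold, the hypotheses of Theorem~\ref{thm:D} are satisfied, and (3) follows immediately. The argument has no real obstacle; the only delicate point is to be careful that the intensity measure, although $\sigma$-finite and infinite on the full spacetime, is finite on each compact slab (which gives discreteness) while being infinite on each $U_k\times[0,\infty)$ (which gives spatial density of positive-time sources). Both facts are direct consequences of the uniform comparability of $\mu$ with the product measure $\nu$.
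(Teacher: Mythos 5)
Your proof is correct and follows essentially the same route as the paper's: use the Poisson probability formula together with the uniform comparability of $\mu$ and $\nu$ to get (a) $\partial M\times[0,\infty)$ has measure zero, (b) each time slab $M\times[-n,n]$ has finite measure (giving discreteness and countability), and (c) a countable family of spatial neighborhoods crossed with $[0,\infty)$ has infinite measure (giving density after a countable intersection). The only cosmetic difference is that the paper uses balls $B_M(x_i,j^{-1})\times[0,\infty)$ around a dense sequence where you use a countable basis $\{U_k\}$, which is the same argument.
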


That is, if the events $S\subset M\times\R$ are given by a Poisson process, then almost surely the reconstruction from data for finite but increasing time converges to the true solution as time increases.

\section{Discussion}
\label{sec:discussion}

\subsection{Geometric inverse problems and seismology}

An elastic body --- e.g. a planet --- can be modeled as a manifold, where distance is measured in travel time:
The distance between two
points is the shortest time it takes for a wave to go from one point
to the other.
If the material is isotropic or elliptically anisotropic, then this
elastic geometry is Riemannian.
However, this sets a very stringent
assumption on the stiffness tensor describing the elastic system, and
Riemannian geometry is therefore insufficient to describe the
propagation of seismic waves in the Earth.
If no structural
assumptions on the stiffness tensor apart from the physically
necessary symmetry and positivity properties are made, this leads
necessarily modeling the planet by a Finsler manifold as was explained in~\cite{dehoop2020determination}. 

An isotropically elastic medium carries pressure (\Pwave) and shear (\Swave) wave speeds that are conformally Euclidean metrics.
Of these two the \Pwave-waves are faster~\cite{cerveny2005seismic}.
In order to be true to the isotropic elasticity we should measure both \Pwave- and \Swave-wave arrivals.
As we have many sources going off in different locations and we do not know which arrival is related to which source we cannot \emph{a priori} catalog arrivals of different wave types.
We simplify this aspect of the problem by disregarding polarizations and considering only one type of wave.

In the two following subsections we review some seismologically relevant geometric inverse problems on Riemannian and Finsler manifolds.
We also draw relations between the mathematical data, considered in this paper, and some real world seismic measurements.
In the final subsection we discuss our contributions to field of inverse problems and possible future research directions suggested by the results of this paper.

\subsubsection{Geometric inverse problems on Riemannian manifolds}

The problem of determination of the isometry type of a compact Riemannian manifold from its \emph{boundary distance data}
\begin{equation}
\label{eq:BDD}
\{r_p\colon \p M \to (0,\infty);\: p \in \sisus(M)\}
\end{equation}
was introduced for the first time in~\cite{kurylev1997multidimensional}. The reconstruction of the smooth atlas on the manifold and the metric tensor in these coordinates was originally considered in~\cite{Katchalov2001}. In contrast to the paper at hand these earlier results do not need any extra assumption for the geometry, but have a complete data in the sense that the distance function~$r_p$ to the boundary is known for any interior source point $p\in \sisus(M)$.

The problem of boundary distance data is related to many other geometric inverse problems. For instance, it is a crucial step in proving uniqueness for Gel'fand's inverse boundary spectral problem~\cite{Katchalov2001}.
Gel'fand's problem concerns the question whether the data
\begin{equation}
(\p M, (\lambda_j, \p_\nu \phi_j|_{\p M})_{j=1}^\infty)
\end{equation}
determine $(M,g)$ up to isometry, when $(\lambda_j, \phi_j)$ are the Dirichlet eigenvalues and the corresponding $L^2$-orthonormal eigenfunctions of the Laplace--Beltrami operator.
Belishev and Kurylev provide an
affirmative answer to this problem in~\cite{belishev1992reconstruction}.

In~\cite{katsuda2007stability} the authors studied a question of approximating a Riemannian manifold under the assumption: For a finite set of receivers $R\subset \p M$ one can measure the travel times $\{r_p(z);\: z \in R\}$ for any  point $p \in \sisus(M)$.
In this problem arrival times (with known initial times) are measured from all interior sources at a finite number of boundary points, whereas in the present paper we measure arrival times (with unknown initial times) from a finite number of interior sources at all boundary points.
The authors construct an approximate finite metric space ~$M_\eps$, with the same cardinality as the receiver set~$R$, and show that if the Hausdorff distance of~$R$ and~$\p M$ is less than $\eps>0$ then the Gromov--Hausdorff distance of~$M$ and~$M_\eps$ is proportional to some positive power of~$\eps$.
In contrast to the present paper an independent travel time measurement was made for any interior source point.

We recall that for a source $s\in S$ we do not know the initial time~$\tau(s)$, but due to Proposition~\ref{prop:separation} we can recover the arrival times $a_s(z)=d(z,\pi(s))+\tau(s)$ for each $z\in \p M$.
Taking the difference of the arrival times one obtains a boundary distance difference function
\begin{equation}
D_{\pi(s)}(z_1,z_2):=d(\pi(s),z_1)-d(\pi(s),z_2)
\end{equation}
for all $z_1,z_2 \in \p M$,
and this is independent of the initial time.
In~\cite{LaSa}
it is shown that if $U\subset N$ is a compact subset of a closed
Riemannian manifold $(N,g)$ and $\sisus(U)\neq \emptyset$, then
\emph{distance difference data} 
\begin{equation}
((U,g|_{U}), \{D_x\colon U\times U
\to \R \:| \:x \in N\})
\end{equation}
determine $(N,g)$ up to an isometry.
This
result was recently generalized to complete Riemannian manifolds~\cite{ivanov2018distance} and for Riemannian manifolds with
boundary~\cite{de2018inverse, ivanov2020distance}.
These results require sources at all interior points, unlike the ones in the present paper.

If the sign in the definition of the distance difference functions is
changed, we arrive at the distance sum functions,
\begin{equation}
D^+_x(z_1,z_2)=d(z_1,x)+d(z_2,x)
\end{equation}
for all $x\in M$ and $z_1,z_2\in \p M$.
These functions give the lengths of the broken geodesics, that is, the
union of the shortest geodesics connecting~$z_1$ to~$x$ and the
shortest geodesics connecting~$x$ to~$z_2$.
Also, the gradients of $D^+_x(z_1,z_2)$ with respect to~$z_1$ and $z_2$ give the velocity
vectors of these geodesics.
The inverse problem of determining the
manifold $(M,g)$ from the \emph{broken geodesic data}, consisting of
the initial and the final points and directions, and the total length,
of the broken geodesics, has been considered in~\cite{kurylev2010rigidity}. In~\cite{kurylev2010rigidity} the authors
show that broken geodesic data determine the boundary distance data
and use then the results of \cite{Katchalov2001, kurylev1997multidimensional} to prove that the broken geodesic data determine the Riemannian manifold up to an isometry. 
A different variant of broken geodesic data was recently considered in~\cite{meyerson2020stitching}.

The Riemannian wave operator is a globally hyperbolic
linear partial differential operator of real principal
type.
Therefore, the Riemannian distance function and the propagation
of a singularity initiated by a point source in space time are related
to one another.
We let~$u$ be the solution of the Riemannian wave equation with a point
source $s \in S$.
In \cite{duistermaat1996fourier, greenleaf1993recovering} it is shown
that the image, $\Lambda$, of the wavefront set of~$u$, under the
musical isomorphism $T^\ast M \ni (x,\xi) \mapsto (x,g^{ij}(x)\xi_i) \in TM$, coincides
with the image of the unit sphere~$S_{\pi(s)}M$ at~$\pi(s)$ under the geodesic
flow of~$g$. Thus $\Lambda \cap \p(S M)$, where~$SM$ is the unit sphere bundle of $(M,g)$, coincides with the exit
directions of geodesics emitted from~$\pi(s)$.
In~\cite{lassas2018reconstruction} the authors show that if $(M,g)$ is a
compact smooth non-trapping Riemannian manifold with smooth strictly
convex boundary, then generically the \emph{scattering data of point 
sources} $(\p M, R_{\p M}(M))$ determine $(M,g)$ up to
isometry.
Here, $R_{\p M}(x) \in R_{\p M}(M)$ for $x \in M$ stands for
the collection of tangential components to boundary of exit directions
of geodesics from~$x$ to~$\p M$.

If~$M$ is an open subset of a complete Riemannian manifold $(N,g)$, then one more important data set, given by the wavefront set of~$u$, is related to a \emph{generalized spheres} of radius $r>0$, that is given by the formula
\begin{equation}
\begin{split}
S(p,r)
\coloneqq
\{\exp_p(v)\in N;\:
&v\in T_pM \text{, } \|v\|_g=r,
\\&
\text{and $\exp_p$ is not singular at~$v$}\}.
\end{split}
\end{equation}
In~\cite{deHoop1} the authors show that the \emph{spherical surface data}
\begin{equation}
\{S(q,t)\cap N \setminus M;\: q  \in M \text{ and } t>0 \}
\end{equation}
determine the universal cover space of~$N$. If a generalized sphere $S(p,r)$ is given the authors show that there exists a specific coordinate structure in a neighborhood of any maximal normal geodesic to $S(p,r)$ such that in these coordinates metric tensor~$g$ can can be determined. However this does not determine~$g$ globally. The authors provide an example of two different metric tensors which produce the same spherical surface data. 

A classical geometric inverse problem, that is closely related to the distance functions, asks:
Does the Dirichlet-to-Neumann mapping of a Riemannian wave operator determine a Riemannian manifold up to isometry?
For the full boundary data this problem was solved originally in~\cite{belishev1992reconstruction} using the Boundary control method. Partial boundary data questions have been studied for instance in \cite{lassas2014inverse, milne2019codomain}.
Recently~\cite{kurylev2018inverse} extended these results for connection Laplacians.
Lately also inverse problems related to non-linear hyperbolic equations have been studied extensively  \cite{kurylev2014inverse, lassas2018inverse, wang2016inverse}.
For a review of inverse boundary value problems for partial differential equations see \cite{LassasICM2018, uhlmann1998inverse}.

Maybe the most studied geometric inverse problem formulated with the distance functions is the Boundary rigidity problem. This problem asks:  Does the \emph{boundary distance function}, that gives a distance between any two boundary points, determine $(M,g)$ up to an isometry? In an affirmative case $(M,g)$ is said to be boundary rigid. For a general Riemannian manifold the problem is false: Suppose the manifold contains a domain with very slow wave speed, such that all the geodesics starting and ending at the boundary avoid this domain. Then in this domain one can perturb the metric in such a way that the boundary distance function does not change. It was conjectured in~\cite{michel1981rigidite} that for all compact simple Riemannian manifolds the answer is affirmative. In two dimensions it was solved in~\cite{pestov2005two}. For higher dimensional case the problem is still open, but different variations of it has been considered for instance in \cite{burago2010boundary, croke1991rigidity, stefanov2016boundary, stefanov2017local}. 

A general feature of geometric inverse problems related to seismology is that each point source is analyzed in isolation and there is a source in every point on the manifold.
We drop both of these assumptions to step towards a physically more accurate model.
The source set can be finite and the data does not \emph{a priori} make a distinction between different events.
This distinction is achieved by Proposition~\ref{prop:separation}.
Another novelty is that our discrete approximation is quantitatively stable in the sense of the labeled Gromov--Hausdorff distance.
All our estimates are given by the data and some geometric constants and do not depend on other \emph{a priori} knowledge of the manifold~$M$.

\subsubsection{Related geometric inverse problems on Finsler manifolds}

In~\cite{dehoop2020determination} we studied the recovery of a compact Finsler manifold from its boundary distance data. In contrast to earlier Riemannian results \cite{Katchalov2001, kurylev1997multidimensional} the data only determines the topological and smooth structures, but not the global geometry. The Finsler function $F\colon TM \to [0,\infty)$ can be however recovered in a closure of the set $G(M,F)\subset TM$, that consists of points $(p,v)\in TM$ such that the corresponding geodesic~$\gamma_{p,v}$ is distance minimizing to the terminal boundary point. We also showed that if the set $TM \setminus G(M,F)$ is non-empty then any small perturbation of~$F$ in this set leads to a Finsler metric whose boundary distance data agrees with the one of~$F$. If $G(M,F)=TM$, then the boundary distance data determines $(M,F)$ up to a Finsler isometry. For instance the isometry class of any simple Finsler manifold is determined by this data.
The same is not true if only the boundary distance function is known~\cite{ivanov2013local}.
Thus a simple Finsler manifold is never boundary rigid.
In~\cite{de2020foliated} we utilized the main result of~\cite{dehoop2020determination} and generalized the result of~\cite{kurylev2010rigidity}, about the broken geodesic data, on reversible Finsler manifolds, satisfying a convex foliation condition. 

Although simple Finsler manifolds are not boundary rigid there are results considering their rigidity questions for some special Finsler metrics.
For instance it was shown in~\cite{monkkonen2020boundary} that Randers metrics $F_i=G_i+\beta_i$ indexed with $i \in \{1,2\}$ with simple and boundary rigid Riemannian norm $G_i(x,v)=\sqrt{g_{ij}(x)v^iv^j}$ and closed one-form~$\beta_i$, have the same boundary distance function if and only if $G_1=\Psi^{\ast}G_2$ for some boundary fixing diffeomorphism $\Psi\colon M \to M$ and $\beta_1-\beta_2=\mathrm d \phi$ for some smooth function~$\phi$ vanishing on~$\p M$. It is worth of mentioning that analogous results have been presented earlier on a Riemannian manifold in the presence of a magnetic field \cite{dairbekov2007boundary, AsZh}.

The sphere data described in connection to the Riemannian results of~\cite{deHoop1} above has also been studied on Finsler manifolds~\cite{Finsler_Dix}.
Knowledge of the spheres uniquely determine the fundamental tensor and the curvature operator along any geodesic passing through the known domain, but in contrast to Riemannian geometry this information is insufficient for a full reconstruction of the universal cover.

\subsubsection{Geophysical literature}
In fact, the inverse problem considered here can be directly related to seismology. The sources correspond with microseismic events with unknown locations and origin times while the metric corresponds with the wave speed. Here, we consider only one wave speed associated with elastic \Pwave-waves, but incorporating a second wave speed associated with elastic \Swave-waves is quite straightforward. In the past decade there has been extensive research on the joint recovery of wave speed and event locations, with mining, geothermal and hydraulic-fracturing applications of induced seismicity but also in studies of the crustal structure. Here, we present a comprehensive analysis providing fundamental insight in the feasibility of succeeding in this, while focusing on recovering the wave speed. We use geometric data rather than wave fields; in applications, the extraction of these have been routine, for example, with template matching~\cite{FeenstraRoeckeretal-2016}.

Initial empirical studies \cite{JanskyPlickaEisner-2010, BliasGrechka-2013} assumed a simple wave speed model varying in one coordinate only and one or two strings of receivers (in boreholes) penetrating the manifold. Strategies have been broadly based on optimization, such as variations of gradient descent \cite{ShekarNath-2015, IgoninInnanen-2018}, with intertwining updating~\cite{MichelTsvankin-2017} and a neighborhood algorithm~\cite{Tanetal-2018} employing different optimization criteria \cite{WittenShragge-2017, Songetal-2019} or via intermediate (approximate) interior wave-field recovery~\cite{SongAlkhalifah-2019}. A basic statistical, Bayesian framework has been developed in parallel~\cite{ZhangRectorNava-2017}.

\subsection{Discussion of technical assumptions}

In this subsection we present a discussion of many technical assumptions we made in our theorems.
Our focus here is on the key phenomenology of having multiple sources with unknown times and we have chosen to simplify other aspects of the problem.

\subsubsection{Density of source points}

We assume in all our results that the sources are discrete in the spacetime $M\times\R$.
This is physically reasonable and also practical.
If the sources were dense or even accumulated somewhere, Proposition~\ref{prop:separation} would become far more complicated.

However, there is little hope of full uniqueness with finitely many source points.
The sources can be discrete in spacetime but dense in space, and this is indeed the setting of Theorems~\ref{thm:B} and~\ref{thm:D} in which the Riemannian manifold is determined uniquely either from full time data or asymptotically from increasing time data.

\subsubsection{Full data and convexity}

All our results concern full data in the sense that arrivals are recorded on the whole boundary~$\partial M$.
The full boundary is crucial for Proposition~\ref{prop:separation} which disentangles the data into a collection of graphs, and we make so heavy use of differential tools on the boundary throughout the paper that a discrete subset of the boundary is beyond current reach.

Let us construct an explicit example of a surface~$M$ and a subset $\Gamma\subset\partial M$ so that our results fail with data recorded only on~$\Gamma$.
Every pair of points on a smooth compact Riemannian manifold with boundary is connected by a $C^1$-smooth distance minimizing curve~\cite{alexander1981geodesics}.
We choose our a manifold to be the horseshoe-shaped domain of Figure~\ref{Fi:f_p}.

\begin{figure}[ht]
\begin{picture}(300,200)
  \put(0,0){\includegraphics[width=10cm]{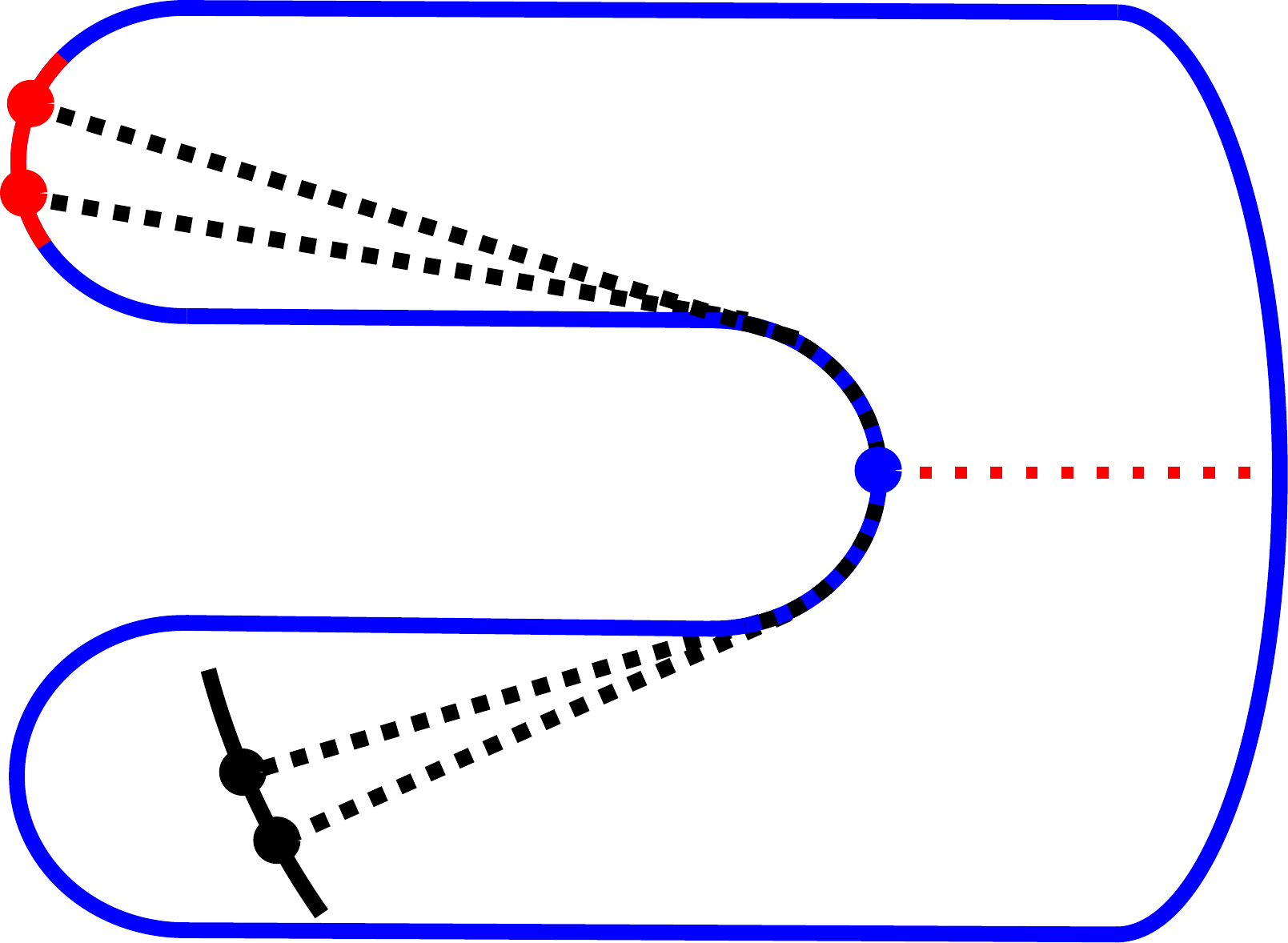}}
  \put(35,25){$P$}
   \put(175,15){$M_2$}
   \put(175,100){$x_0$}
    \put(-15,170){$\Gamma$}
    \put(175,175){$M_1$}
   \end{picture}
\caption{A domain where partial data is insufficient.
We split the domain~$M$ into two pieces~$M_1$ and~$M_2$ with respect to the line (red dotted line) that is normal to~$\p M$ at $x_0\in \p M$ (blue dot).
Then we choose a domain  $\Gamma \subset \p M_1$ (red arch) so that any minimizing curve joining a point on~$\Gamma$ and a point in~$M_2$ touches the boundary near~$x_0$.
The curve $P\subset M$ is any involute of the boundary, meaning that the distance from all points on~$P$ to~$x_0$ is the same.}
\label{Fi:f_p}
\end{figure}

Because~$P$ is an involute of the boundary as in the figure, $d(x,p)=d(x,q)$ for any $x\in\Gamma$ and $p,q\in P$.
Therefore from the point of view of our data, the set~$P$ appears to collapse to a point.

Worse, if~$\Gamma$ happens to be an involute as well, then all the distance functions to points $p\in M_2$ are constants in~$\Gamma$.
As the unknown origin times produce unknown constant offsets to the boundary distances, all points in~$M_2$ look alike when seen from an involutive~$\Gamma$ in this sense.

The problems of Figure~\ref{Fi:f_p} also illustrate the problems lack of convexity of the boundary may cause.

Similar problems may arise in higher dimensions if data is recorded on too small sets.
Let the manifold~$M$ be the closed unit ball in~$\R^n$ and consider partial data on $\Gamma=\partial M\cap H$, where $H\subset\R^n$ is a hyperplane through the origin.
Let~$p_1$ and~$p_2$ be two points in $\sisus(M)\setminus H$ situated symmetrically about~$H$.
Then the boundary distance functions of~$p_1$ and~$p_2$ agree on~$\Gamma$.
The sets~$M$ and $\Gamma\subset\partial M$ have a reflection symmetry which leaves the data invariant, making it impossible to distinguish the two sides.

\subsubsection{Infinitely many lentils}

The crucial condition for the estimates in Theorem~\ref{thm:C23} was that all of the lentils meet the spatial source set~$P$.
There is an infinite family of lentils, so we have a large number of conditions to check from our data.
The lentils are open and cover a certain compact subset of~$M$ (excluding a layer near the boundary), so in fact using a finite number will suffice.
However, it is not easy to identify a covering collection of lentils --- for which one would then easily check whether they contain source points --- from data.

\subsubsection{Conjugate points and boundary sources}
\label{sec:325}

If there is a source point on the boundary, the graph of the distance function is singular at that point.
The separation of the set~$Q(S)$ into graphs in Proposition~\ref{prop:separation} relies on smoothness.
If several corners happen to coincide, especially if $n=2$, it may be difficult to choose how to continue the graphs correctly.

This problem becomes far worse if there are conjugate points.
They also cause the boundary distance function to be non-smooth but can do so for several different points.
This makes both disentangling the data into graphs and the analysis of those graphs substantially more complicated.

The condition~\eqref{eq:diameter-curvature-bound} on the constants of Definition~\ref{def:bdd-geometry} is assumed in Theorems~\ref{thm:B}, \ref{thm:C23}, \ref{thm:reverse}, and~\ref{thm:D}.
This condition is used in Lemma~\ref{lma:lentil-diam} and Proposition~\ref{prop:transversal-radius} when the manifold is compared to model manifold with constant sectional curvature.
For the comparison to work, we need the model manifold of the same diameter to not have conjugate points, and this is exactly what the condition ensures.

The condition~\eqref{eq:diameter-curvature-bound} always holds in negative curvature.
If the manifold is known to have non-positive sectional curvature and an explicit bound on the diameter, one can simply choose the sectional curvature upper bound $\Cvi>0$ to be small enough to satisfy~\eqref{eq:diameter-curvature-bound}.
The condition also holds on all simple manifolds of constant sectional curvature.

The constants~$\Cxxviii$ of~\eqref{eq:c28},
$\Cxiii$ of~\eqref{eq:c13}, and
$\Cxxx$ of~\eqref{eq:c30},
as well as all constants derived from them,
become worse when~$\Civ\sqrt{\Cvi}$ gets close to~$\pi$.

\subsubsection{Constants of quantitative simplicity}

The constants of quantitative simplicity have to be the same for both manifolds~$M_i$ in Theorem~\ref{thm:C23}.
The set~$\Gamma_i$ and the approximate boundary inclusion map~$\alpha$ depend on these constants.
The distances between the source points are determined irrespective of the constants by Theorem~\ref{thm:A}, but the constants have an effect on the approximate boundary structure and the estimates on the quality of the reconstruction.

\subsubsection{Scaling of small quantities}

Consider the case when all the various epsilons are very small.
In the setting of Theorem~\ref{thm:C23} we have
$\delta\approx\eps_2\approx\eps^2$
and in Theorem~\ref{thm:reverse} we have
$\delta\approx\eps_2\lesssim\eps^2\approx\hat\eps$.
Ideally, we would expect to see no second powers so that all concepts of density --- the size of lentils, the true density, the observed quantity~$\eps_2$, the density estimated from data --- to be bi-Lipschitz equivalent to each other.
The difference in scaling is all due to the lentils being shaped so that
\begin{equation}
\text{outer radius}
\approx
\sqrt{\text{inner radius}}.
\end{equation}
This scaling is easy to verify in Euclidean geometry, and the relevant Riemannian aspects are covered in Lemmas~\ref{lma:lentil-diam} and~\ref{lma:trans-radius-and-ball}, and Proposition~\ref{prop:transversal-radius}.

\subsubsection{Generalization to simple Berwald manifolds}

As discussed above, our methods depend on the smoothness of the distance function. Thus these techniques are not applicable for general Riemannian metrics and relaxing the simplicity assumption would likely require different methods. There is however hope to apply these techniques to study analogous questions in the case of certain simple Finsler metrics.  

Any constant speed geodesic~$\gamma$ of a smooth connected and complete Finsler manifold $(N,F)$ is given by the solution of the geodesic equations, in local coordinates,
$
\ddot{\gamma}^k(t)+\Gamma^k_{ij}(\gamma,\dot{\gamma})\dot{\gamma}^i(t)\dot{\gamma}^j(t)=0.
$
Here $\Gamma^k_{ij}(x,v)$ are the coefficients of the \emph{Chern connection} on the tangent bundle~\cite{bao2012introduction}. Due to the dependence of the directional variable $v\in T_xN$ a reverse curve of a Finsler geodesic does not need to be a geodesic. If the Chern connection coefficients are directionally independent the Finsler metric~$F$ is called \emph{Berwald}. Thus Berwald manifolds have a well defined canonical covariant differential operator for tensor fields of any order. Therefore Berwald metrics carry a Gauss type of formula to connect boundary and interior Hessians for the distance function. This is not true for a general Finsler metrics and the lack of Gauss formula  would cause issues in several parts of our proofs. 
%
It is worth of mentioning that Berwald metrics are not just a theoretical curiosity but have a connection to linear elasticity. It has been observed that elastic Finsler metrics, arising from transverse isotropic medium in weak anisotropy, are actually Berwaldian~\cite{yajima2011finsler}. 

\section{Plan of proofs}
\label{sec:proof-plan}

Our data is given in the form of a set, a union of graphs, and we begin by disentangling it into the separate graphs.
We prove Proposition~\ref{prop:separation} to this effect in Section~\ref{sec:separation}.

In Section~\ref{sec:exact} we turn to finding distances between source points and other information that the data gives exactly.
With the help of Proposition~\ref{prop:separation}, we may start with the knowledge of the arrival time functions~$a_s$.
Each graph corresponds to a unique source point, giving us the first claim.
Computing suitable differences of arrival times, especially along the unique geodesic between two given source points, we cancel out unwanted contributions and find the distance between the two points.
Once we have these first two claims, the rest of Theorem~\ref{thm:A} follows straightforwardly.


In Section~\ref{sec:LGH} we depart from differential geometry to the geometry of metric spaces and study the labeled Gromov--Hausdorff distance in more detail, including the proof of Proposition~\ref{prop:LGH}.

The most substantial part of our proofs is in estimating the density of the sources, and we will do this in Section~\ref{sec:density} using the explicit bounds on geometric quantities found in Section~\ref{sec:bdd-geom} --- including the proof of Proposition~\ref{prop:bdd-geom}.

The first task is to estimate the density of sources near the boundary, which practically amounts to estimating how near the boundary a given source point is.
This is done in terms of the curvature of the graph of the arrival time function; when the point is very close to the boundary, the Hessian of the said function blows up.
We will prove that $d(p,x)\leq E(p,x)$ for any source point $p\in P$ and a critical point $x \in \p M$ of its arrival time function.
This gives an estimate in terms of data for how close to any given boundary point must there be a source point.
The initial data is first processed into the form of $E(p,x)$ and quantities derived from it, and we verify in Proposition~\ref{prop:E&L} that the necessary auxiliary quantities are indeed determined by our data.

To get a density estimate deep inside the manifold, a different set of tools must be employed.
A key tool is the lentil, an intersection of two metric balls.
Whether a source point belongs in a lentil defined by two other source points is entirely decided by the distances between the three points, and by Theorem~\ref{thm:A} this information is indeed determined by the data.
We will show that with a suitable choice of $\delta>0$ (as given in Theorem~\ref{thm:C23}) the lentils cover the deep interior of the manifold but are small.
This requires estimates in two directions for the lentils so that they are large enough to cover the relevant subset of~$M$ without gaps but are small enough so that they provide a useful density estimate.
If every point is contained in a lentil and that lentil contains a source point, then the density of sources is bounded by the diameters of the lentils.
The more complicated side of lentils is to ensure that they cover enough of the manifold, and this relies on a number of estimates based on bounded geometry.
These density estimates constitute a proof of Theorem~\ref{thm:C23}.

Density estimates turn out to be simpler in the reverse direction, showing that the methods used in the proof of Theorem~\ref{thm:C23} are fine enough to give a decent estimate of density.
These reverse estimates prove Theorem~\ref{thm:reverse}.

In Section~\ref{sec:convergence} we turn our attention to matters of convergence.
The proof of Theorem~\ref{thm:D} is mostly based on Theorem~\ref{thm:C23} and Theorem~\ref{thm:reverse}; as~$T$ increases, the source points become denser and denser both in reality and in terms of data-driven estimates, and thus the quality of the discrete approximation improves.

Theorem~\ref{thm:B} follows a similar path and can be regarded essentially as a corollary of Theorem~\ref{thm:D}, and thus it has a relatively simple proof now that all the tools are available.
The metric convergence results show that the two manifolds of Theorem~\ref{thm:B} are isometric as metric spaces, and by the Myers--Steenrod theorem the isometry is in fact smooth.
Proposition~\ref{prop:poisson} gets its proof, using similar ideas and basic properties of Poisson point processes, at the very end.

\section{Separation of graphs on a bundle}
\label{sec:separation}
Let us denote the scalar second fundamental form by~$h_2$ and the boundary metric (the first fundamental form) by~$h_1$. If $p \in \sisus(M)$, $\rho \colon M\to \R$ is the distance function $\rho(x)=d(p,x)$ at~$p$ then the corresponding boundary distance function $r\colon M \to \R$, is just the restriction of~$\rho$ on the boundary~$\p M$.
By the Gauss formula for hypersurfaces (see e.g. \cite[Theorem 8.13 a]{lee2018introduction}) and the definition of Hessians we have that
\begin{equation}
\label{eq:hess=hess+sff}
\Hess_r(x)
=
\Hess_\rho(x)
+
h_2(x).
\end{equation}
We will make use of this throughout the paper.

Our method of proving Proposition~\ref{prop:separation} is by means of lifting the data from the boundary $\partial M\times\R$ of the spacetime to a suitable bundle where the graphs do not intersect.
We start by showing that when $s\neq s'$, then the graphs of $a_s$ and $a_{s'}$ can only have up to first order tangency when they intersect.
Therefore $2$-jets will separate them.

\begin{lemma}
\label{lma:tangent-order}
Let~$M$ be a simple Riemannian manifold and $s,s'\in\sisus(M)\times\R$ be two distinct points in the spacetime.
Then if $a_s(x)=a_{s'}(x)$ at some point $x\in\partial M$, then the gradient or the Hessian of the two functions will differ at~$x$.
\end{lemma}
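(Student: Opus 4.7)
I would argue by contradiction: assume $s\neq s'$ yet $a_s$ and $a_{s'}$ share value, gradient, and Hessian at $x$. Since $a_s=\tau(s)+\rho_{\pi(s)}|_{\partial M}$ with $\rho_p:=d(p,\cdot)$, the boundary gradient and Hessian of $a_s$ coincide with those of $r_{\pi(s)}:=\rho_{\pi(s)}|_{\partial M}$. If $\pi(s)=\pi(s')$ then $a_s-a_{s'}=\tau(s)-\tau(s')$ is a nonzero constant, contradicting $a_s(x)=a_{s'}(x)$; so write $p:=\pi(s)\neq\pi(s')=:p'$. Next, I would promote boundary gradient agreement to ambient gradient agreement at $x$. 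The ambient gradient $\nabla\rho_p(x)\in T_xM$ is a unit vector whose tangential component is $\nabla r_p(x)$ and whose outward-normal component equals $+\sqrt{1-\abs{\nabla r_p(x)}^2}$, the positive sign being forced because $p\in\sisus(M)$ implies the minimizing geodesic from $p$ exits transversally outward at $x$. Thus $\nabla r_p(x)=\nabla r_{p'}(x)$ yields $\nabla\rho_p(x)=\nabla\rho_{p'}(x)=:\eta$, and in particular $\partial_\nu\rho_p(x)=\partial_\nu\rho_{p'}(x)$. Injectivity of $\exp_x$ (from simplicity) places $p$ and $p'$ on the common unit-speed geodesic $\gamma(t):=\exp_x(-t\eta)$ at distances $T:=d(p,x)$ and $T':=d(p',x)$ with $T\neq T'$.

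Next I would upgrade Hessian agreement. By~\eqref{eq:hess=hess+sff} applied to $\rho_p$ and $\rho_{p'}$, whose boundary-to-ambient correction terms share the common factor $\partial_\nu\rho_p(x)=\partial_\nu\rho_{p'}(x)$, the hypothesis $\Hess r_p(x)=\Hess r_{p'}(x)$ upgrades to $\Hess\rho_p(x)=\Hess\rho_{p'}(x)$ as symmetric bilinear forms on $T_x\partial M$. The eikonal identity $\abs{\nabla\rho_p}\equiv1$ implies $\Hess\rho_p(\nabla\rho_p,\cdot)\equiv0$, and similarly for $\rho_{p'}$; since both gradients equal $\eta$ at $x$, the difference $D:=\Hess\rho_p(x)-\Hess\rho_{p'}(x)$ annihilates both $T_x\partial M$ and $\eta$. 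As $\eta$ has nonzero normal component, $T_xM=T_x\partial M+\R\eta$, hence $D\equiv0$ on $T_xM$; in particular $D$ vanishes on $\eta^\perp=\dot\gamma(0)^\perp$.

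Finally I would obtain the contradiction from a matrix Riccati equation. Along $\gamma$, the shape operators $\mathcal H_q(t):=\Hess\rho_q(\gamma(t))\bigr|_{\dot\gamma(t)^\perp}$ for $q\in\{p,p'\}$ satisfy the same matrix Riccati equation $\mathcal H'(t)=\mathcal H(t)^2+R(t)$, where $R(t)$ is the tidal curvature operator along $\gamma$ and is independent of the source. Assume without loss of generality $T<T'$. The previous paragraph gives $\mathcal H_p(0)=\mathcal H_{p'}(0)$, so by uniqueness of ODE solutions $\mathcal H_p\equiv\mathcal H_{p'}$ on $[0,T)$. But $\mathcal H_p(t)\to\infty$ as $t\to T^-$ (the classical $1/t$-type blow-up of the Hessian of the distance function as one approaches its source, available here because simplicity excludes conjugate points), while $\mathcal H_{p'}$ remains smooth past $t=T$ up to $t=T'$. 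This is the contradiction. The main obstacle is the second paragraph: one must verify that the Gauss-formula correction terms cancel exactly, which rests precisely on the gradient agreement forcing $\partial_\nu\rho_p=\partial_\nu\rho_{p'}$ at $x$; the Riccati step is then a textbook no-conjugate-points argument.
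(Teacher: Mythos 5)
Your proof is correct and follows the same overall skeleton as the paper's: reduce to $\pi(s)\neq\pi(s')$, upgrade boundary gradient agreement to place $p$ and $p'$ on one geodesic through $x$, upgrade boundary Hessian agreement via the Gauss formula to interior Hessian agreement at $x$, and conclude from the absence of conjugate points. The only material difference is the closing step: the paper produces a single nonzero normal Jacobi field $J$ along $\gamma$ with $J(0)=w$ and $D_tJ(0)$ prescribed by the common Hessian, deduces $J(t_0)=0=J(t_0')$, and invokes no-conjugate-points directly; you instead reason with the matrix Riccati equation for $\Hess_\rho|_{\dot\gamma^\perp}$, using ODE uniqueness together with the $1/(T-t)$ blow-up at the nearer source to contradict smoothness of the other Hessian across $t=T$. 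These are standard dual viewpoints (the Riccati operator is the logarithmic derivative of the Jacobi matrix, and its blow-up is precisely the vanishing of a Jacobi field), so the arguments are equivalent though packaged differently. A small plus in your writeup: you explicitly keep the $\partial_\nu\rho$ factor in the Gauss formula and observe that it cancels because the gradients agree at $x$; the paper's equation~\eqref{eq:hess=hess+sff} suppresses that factor (it literally applies only when the geodesic exits normally), so in the merely transversal situation of this lemma yours is the more precise statement of why the interior Hessians must agree.
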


\begin{proof}
Take any two $s,s'\in S$ so that~$a_s$ and~$a_{s'}$, their differentials, and their Hessians agree at a point $x\in\p M$.
We aim to show that $s=s'$.

First, observe that the differential~$\der a_s(x)$ is the covector corresponding to the tangential component of the velocity of the unit speed geodesic from~$\pi(s)$ to~$x$.
Therefore $\der a_s(x)=\der a_{s'}(x)$ implies that $p\coloneqq\pi(s)$ and $p'\coloneqq\pi(s')$ are on the same geodesic starting from~$x$.

Let~$\gamma$ be this unit speed geodesic starting at~$x$.
We have $\gamma(t_0)=p$ and $\gamma(t_0')=p'$ for some $t_0,t_0'>0$.

Consider the Hessian $\Hess\coloneqq\Hess_{\rho_p}$ of the interior distance function $\rho_p\colon M\to\R$ defined by $\rho_s(x)=d(x,p)$ and its counterpart~$\Hess'$ corresponding to~$p'$.

If a normal Jacobi field~$J$ along~$\gamma$ satisfies $\Hess J(t)=-D_tJ(t)$ for some $t<t_0$, then $J(t_0)=0$, (see for instance \cite[Proposition 11.2]{lee2018introduction}).
Similarly, $\Hess'J(t)=-D_tJ(t)$ for any $t<t_0'$ implies $J(t_0')=0$.

Both Hessians satisfy $\Hess^{(\prime)}\dot\gamma(t)=-\dot\gamma(t)$ for $t<\min(t_0,t_0')$.
As the geodesic meets~$\p M$ transversally at~$x$ and the boundary Hessians of~$a_s$ and~$a_{s'}$ agree there, we have in fact the equality $\Hess(x)=\Hess'(x)$ for the Hessian operators on~$T_xM$.
This follows from~\eqref{eq:hess=hess+sff}.

Now take any nonzero $w\in T_xM$ normal to~$\dot\gamma(0)$ and let~$J$ be the Jacobi field along~$\gamma$ with $J(0)=w$ and $D_tJ(0)=\Hess w=\Hess'w$.
As observed above, this implies $J(t_0)=0$ and $J(t_0')=0$.
Due to the lack of conjugate points $p=\gamma(t_0)=\gamma(t_0')=p'$.

As the two source points are equal distance $t_0=t_0'$ from~$x$ and the two arrivals are at the same time, we also have $\tau(s)=\tau(s')$.
This concludes the proof of $s=s'$.
\end{proof}

\begin{proof}[Proof of Proposition~\ref{prop:separation}]
Let us abbreviate $Q\coloneqq Q(S)\subset\partial M\times\R$.
As $S\subset M\times\R$ is discrete and~$M$ is compact, there are only finitely many points of~$S$ in any interval $M\times[a,b]$.
By simplicity there is no geodesic longer than the diameter of the manifold.
Thus~$Q$ written as
\begin{equation}
Q
=
\bigcup_{s\in S}\Gr(a_s)
\end{equation}
is a locally finite union on $\partial M\times\R$.

Let~$\sigma(Q)$ be the set of ``smooth points of~$Q$'', where~$Q$ is given locally as a single graph.
By definition $\sigma(Q)\subset Q$ is open.
It is also dense, for otherwise there would be two graphs that coincide in an open set, contradicting Lemma~\ref{lma:tangent-order}.

Let~$E$ be the bundle over $\partial M\times\R$ with fibers
\begin{equation}
E_{(x,t)}
=
T_x\partial M\times T_x^{\otimes2}\partial M.
\end{equation}
This~$E$ is the product of the bundles~$T_xM$ and $T_xM\otimes T_xM$ pulled to $\partial M\times\R$ over the projection $\partial M\times\R\to\partial M$.

For any smooth function $h\colon U\to\R$ defined in an open set $U\subset\partial M$ we the lift its graph as $L\Gr(h)\subset E$ so that
\begin{equation}
L_{x,h(x)}\Gr(h)
=
(\nabla h(x),\nabla^2 h(x))
\in
T_x\partial M\times T_x^{\otimes 2}\partial M.
\end{equation}
This defines a smooth submanifold of~$E$.

In this way we lift all of~$\sigma(Q)$ into a submanifold~$L\sigma(Q)$ of~$E$.
Since $\sigma(Q)\subset Q$ is dense, we have
\begin{equation}
\label{eq:bundle-closure}
\overline{L\sigma(Q)}
=
\bigcup_{s\in S} L\Gr(a_s).
\end{equation}
There are no second order intersections by lemma~\ref{lma:tangent-order}, so the smooth submanifolds $L(\Gr(a_s))\subset E$ are pairwise disjoint.

Therefore~$Q$ determines the set
\begin{equation}
\{
L\Gr(a_s)\subset E
;\:
s\in S
\}
.
\end{equation}
Projecting from~$E$ down to $\partial M\times\R$ gives the graphs~$\Gr(a_s)$ and thus also the functions~$a_s$, proving the first part of the claim.

For the second claim we simply localize~\eqref{eq:bundle-closure} to be over $\Omega\subset\partial M\times\R$, and we get a disjoint union of the sets $L[\Gr(a_s)\cap \Omega]$.
A single graph~$\Gr(a_s)$ may be cut into several pieces by this procedure and we do not necessarily know which pieces correspond to the same source.
Therefore what we obtain is the collection of connected components of graphs restricted to~$\Omega$.
\end{proof}

It was proven in~\cite{kurylev1997multidimensional} that the boundary distance function $r_x\colon\partial M\to\R$ of $x\in\sisus(M)$ determines the point~$x$ uniquely.
The next lemma improves this slightly: the boundary distance function modulo constants is enough for this uniqueness.

\begin{lemma}
\label{lma:Slava+constant}
Let~$M$ be a simple Riemannian manifold and $x,y\in\sisus(M)$ any two distinct points.
Denote by~$r_x$ and~$r_y$ the boundary distance functions $\partial M\to\R$ from these points.
The difference function $r_x-r_y$ cannot be a constant function.
\end{lemma}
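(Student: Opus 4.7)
The plan is to reduce this statement directly to Lemma~\ref{lma:tangent-order} via an auxiliary construction in the spacetime. Suppose for contradiction that $r_x - r_y \equiv c$ on $\p M$ for some constant $c \in \R$. I would introduce the two spacetime events $s = (x, 0)$ and $s' = (y, c)$, both lying in $\sisus(M) \times \R$. Their arrival time functions are
\begin{equation}
a_s(z) = 0 + d(z, x) = r_x(z), \qquad a_{s'}(z) = c + d(z, y) = c + r_y(z),
\end{equation}
which by hypothesis agree identically on $\p M$.

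Since $x, y \in \sisus(M)$ and $M$ is simple, the interior distance functions $\rho_x, \rho_y$ are smooth away from $x, y$, hence $a_s$ and $a_{s'}$ are smooth on all of $\p M$. The identity $a_s \equiv a_{s'}$ on $\p M$ then implies that their values, intrinsic differentials, and intrinsic Hessians coincide at every point $z_0 \in \p M$. Lemma~\ref{lma:tangent-order} applied at any such $z_0$ forces $s = s'$, i.e., $(x, 0) = (y, c)$ in $M \times \R$. This yields both $x = y$ and $c = 0$, contradicting the hypothesis $x \neq y$.

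The real content of the argument is packaged inside Lemma~\ref{lma:tangent-order}, whose proof combines the Gauss formula~\eqref{eq:hess=hess+sff} relating boundary and interior Hessians with the simplicity of $M$ (absence of conjugate points, strict convexity of $\p M$ guaranteeing transversal arrival of minimizing geodesics) to pin down a single spacetime source from its 2-jet at one boundary point. The only subtle point in the reduction is that the two auxiliary sources have \emph{different} time coordinates; this is precisely what the lemma allows, since it is stated for arbitrary pairs of points in $\sisus(M) \times \R$, and the freedom in choosing $\tau(s')$ is exactly what lets the constant offset $c$ be absorbed into the spacetime comparison.
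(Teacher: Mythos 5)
Your proof is correct and takes a genuinely different route from the paper's. The paper argues more elementarily: it splits into the cases $c=0$ (where it invokes the uniqueness result of Kurylev~\cite{kurylev1997multidimensional} that the boundary distance function determines the interior point) and $c\neq 0$ (where it runs the geodesic $\gamma$ through $x$ and $y$ to its boundary endpoints $x',y'$, evaluates $r_x-r_y$ at $x'$ and at $y'$, and derives the two incompatible relations $d(x,y)+c=0$ and $d(x,y)-c=0$). Your argument instead packages the constant $c$ as a time offset, lifting $x$ and $y$ to the spacetime sources $(x,0)$ and $(y,c)$, so that $r_x-r_y\equiv c$ becomes $a_s\equiv a_{s'}$; then the contrapositive of Lemma~\ref{lma:tangent-order} (agreement of value, gradient, and Hessian at a single boundary point forces $s=s'$) does all the work at once, handling $c=0$ and $c\neq 0$ uniformly. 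The tradeoffs: your reduction is cleaner, self-contained within the paper, and avoids citing~\cite{kurylev1997multidimensional}, but it invokes the full Jacobi-field/Gauss-formula machinery of Lemma~\ref{lma:tangent-order}, whereas the paper's argument for $c\neq 0$ is purely metric (triangle-inequality-level). Both are valid since Lemma~\ref{lma:tangent-order} precedes this lemma and does not depend on it, so there is no circularity.
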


\begin{proof}
Suppose that the difference function $r_x-r_y$ takes the constant value $c\in\R$ despite $x\neq y$.
If $c=0$, the boundary distance functions coincide and thus by~\cite{kurylev1997multidimensional} $x=y$, yields a contradiction.
Therefore we may suppose that $c\neq0$.

Let~$\gamma$ be the maximal geodesic through the points~$x$ and~$y$ with endpoints $x',y'\in\partial M$ ordered so that~$x'$ is closer to~$x$ and~$y'$ to~$y$.
By assumption we have
\begin{equation}
d(x',x)
=
r_x(x')
=
r_y(x')+c
=
d(x',y)+c.
\end{equation}
Due to simplicity and the order of the points, we have $d(x',y)=d(x',x)+d(x,y)$.
Therefore $0=d(x,y)+c$.
The same calculation with the roles of~$x$ and~$y$ reversed shows that $0=d(x,y)-c$.
These two together imply that $d(x,y)=c=0$, which is a contradiction.
\end{proof}

\section{Exact observables}
\label{sec:exact}

\begin{proof}[Proof of Theorem~\ref{thm:A}]
Due to Proposition~\ref{prop:separation} the data determines the arrival time functions $a_s(x)=d(\pi(s),x)+\tau(s)$ at all points $x \in \p M$ for every $s\in S$.
It also follows from the same proposition that distinct sources have distinct arrival time functions.
Therefore now that the two manifolds~$M_1$ and~$M_2$ and their source sets~$S_1$ and~$S_2$ have the same data modulo identification by~$\phi$, we have a bijection between the source sets given by identifying the corresponding arrival time functions which coincide.
This proves claim~\ref{claim:i}.

We will prove all subsequent claims by describing how the arrival time functions determine the quantities in question.
As these functions coincide on the two manifolds, the reconstructed quantities coincide.
We therefore drop the subscripts and work with a single manifold.
We can think of the source points being indexed by their arrival time graphs.
To simplify notation, we denote the source points as $p_s=\pi(s)$ and source times as $t_s=\tau(s)$.

The arrival time functions now have the form $a_s(x)=d(p_s,x)+t_s$.
Two source points~$p_s$ and~$p_r$ coincide if and only if $a_s-a_r$ is a constant function --- this is a straightforward corollary of Lemma~\ref{lma:tangent-order} and Lemma~\ref{lma:Slava+constant}.
We pass to a subset of~$S$ so that each source point~$p_s$ is only present once; the claims extend easily to the duplicated source points.

For any $s,r\in S$ we define the functions $f_{rs}\colon\partial M\times\partial M\to\R$ by
\begin{equation}
f_{rs}(x,y)
=
a_r(x)-a_s(y).
\end{equation}
These functions are determined by the data. 
Taking $s=r$ proves claim~\ref{claim:v}.

From now on we suppose that $s\neq r$.
The differentials~$\der a_s$ and~$\der a_r$ agree at $x\in\partial M$ if and only if the geodesics joining~$x$ to~$p_s$ and~$p_r$ start in the same direction at~$x$.
Therefore the differentials agree at exactly two points, the endpoints of the unique maximal geodesic through the points~$p_s$ and~$p_r$.
Let these boundary points be~$x$ and~$y$.

Depending on how the four points ($p_s,p_r,x,y$) are ordered on the geodesic, we have
\begin{equation}
\label{eq:dist_between_sources}
\begin{split}
f_{rr}(x,y)-f_{ss}(x,y)
&=
d(p_r,x)-d(p_s,x)+d(p_s,y)-d(p_r,y)
\\&=
\pm 2d(p_r,p_s)
.
\end{split}
\end{equation}
Thus claim~\ref{claim:ii} is given by
\begin{equation}
d(p_i,p_j)
=
\frac{1}{2}
\abs{f_i(x,y)-f_j(x,y)}
.
\end{equation}
This determination of distances between source points will play a central role in the proofs of the other theorems.

By switching the points~$x$ and~$y$ if needed (depending on the sign in~\eqref{eq:dist_between_sources}), we can assume to have
$d(p_r,y)=d(p_r,p_s)+d(p_s,y)$.
Then for any $z\in\partial M$, we have
\begin{equation}
f_{rr}(z,y)-f_{ss}(z,y)+d(p_r,p_s)
=
d(p_r,z)-d(p_s,z).
\end{equation}
As everything on the left-hand side is determined by the data, so is then the right-hand side, and we have obtained the function $f^{rs}\colon\partial M\to\R$ given by
\begin{equation}
f^{rs}(z)
=
d(p_r,z)-d(p_s,z)
\end{equation}
for all $s,r\in S$.
This proves claim~\ref{claim:iv}.

Finally, we note that for any $z\in\partial M$ we have
\begin{equation}
t_r-t_s
=
f_{rs}(z,z)-f^{rs}(z)
\end{equation}
and claim~\ref{claim:iii} follows.
\end{proof}

As mentioned, the proof of Theorem~\ref{thm:B} will be postponed until we have approximation tools available.

\section{The labeled Gromov--Hausdorff distance}
\label{sec:LGH}

Our proof of the basic properties of the labeled Gromov--Hausdorff distance is similar to well known proofs of the usual Gromov--Hausdorff distance (see for instance  \cite[Chapter 3]{gromov2007metric} or \cite[Section 7.3]{burago2001course}) but with the labels interwoven into it.
For the sake of completeness we record our proof in its entirety, although the aspects unrelated to labels are indeed well known.

\begin{proof}[Proof of Proposition~\ref{prop:LGH}]
The symmetry of the labelled Gromov--Hausdorff distance is evident.
The triangle inequality can be verified as in \cite[Proposition 7.3.16]{burago2001course}.
Also, if there is~$h$ as described, then choosing $Z=Y$, $f=h$, and $g=\id$ shows that the distance is zero.
The only nontrivial claim is that if the distance is zero, then such an~$h$ exists.
Let us prove that. 

Take any compact metric space~$Z$ and isometric embeddings $f\colon X\to Z$ and $g\colon Y\to Z$.
We can define a semimetric (all properties of a metric but $d(x,y)=0$ need not imply $x=y$) $d$ on the disjoint union $W=X\sqcup Y$ by letting
\begin{equation}
d(a,b)
=
\begin{cases}
d_Z(f(a),f(b)),&\text{when }a\in X\text{ and }b\in X,\\
d_Z(f(a),g(b)),&\text{when }a\in X\text{ and }b\in Y,\\
d_Z(g(a),f(b)),&\text{when }a\in Y\text{ and }b\in X,\\
d_Z(g(a),g(b)),&\text{when }a\in Y\text{ and }b\in Y.
\end{cases}
\end{equation}
The embeddings are isometric, so $d_Z(f(a),f(b))=d_X(a,b)$ and similarly for~$Y$ and~$g$.

Take any $k\in\N$.
By assumption we have a metric space~$Z_k$ and thus a semimetric~$d_k$ on~$W$ so that $\hd{d_k}(X,Y)<\frac1k$ and $d_k(\alpha(\ell),\beta(\ell))<\frac1k$ for all $\ell\in L$.
These semimetrics agree with the metrics on~$X$ and~$Y$ when restricted to either set.

By compactness there is a finite set $A_k'\subset X$ so that the semiballs
\begin{equation}
\left\{
p\in X
;\:
d_k(x,p)<\frac1k
\right\}_{x\in A_k'}
\end{equation}
cover~$X$.
Similarly, there is a finite $A_k''\subset X$ so that the semiballs
\begin{equation}
\left\{
p\in Y
;\:
d_k(x,p)<\frac1k
\right\}_{x\in A_k''}
\end{equation}
cover~$Y$.
Similarly, there are finite sets~$B_k'$ and~$B_k''$ in~$Y$ so that the semiballs of radius~$\frac1k$ cover~$Y$ and~$X$. We define
$A_k=A_k'\cup A_k''$
and
$B_k=B_k'\cup B_k''$.
We can make all of these choices so that
$A_k\subset A_{k+1}$ and
$B_k\subset B_{k+1}$.

For any $k \in \N$ on the finite set $A_k\sqcup B_k$ the sequence of semimetrics $(d_j)_{j=k}^\infty$ is bounded pointwise, since $d_j|_{A_k \times A_k}=d_{j'}|_{A_k \times A_k}$ and $d_j|_{B_k \times B_k}=d_{j'}|_{B_k \times B_k}$ for $j,j' \geq k$. For $x\in A_k$ and $y \in B_k$ the assumption $\hd{d_k}(X,Y)<\frac1k$ yields
\begin{equation}
d_j(x,y)
\leq
\max(\diam(X),\diam(Y))+ \tfrac{2}{k}
\end{equation}
for all $j\geq k$.
Thus $(d_j)_{j=k}^\infty$ has a converging subsequence and the pointwise limit of semimetrics is a semimetric on $A_k\sqcup B_k$.
Constructing a diagonal sequence gives us a subsequence of~$(d_j)$ which converges pointwise on $W'=\bigcup_{k\in\N}(A_k\sqcup B_k)$ to a semimetric~$\delta$.
Because each~$d_k$ agrees with the original metrics on~$X$ and~$Y$, the only thing to inspect are the ``cross-distances'' between~$X$ and~$Y$.

We can extend~$\delta$ to all of~$W$ as follows.
When~$x$ and~$y$ are both in~$X$ or both in~$Y$, we use the metrics on these spaces.
When $x\in X$ and $y\in Y$, we pick for each $k\in\N$ points
$x_k\in B(x,\frac1k)\cap A_k\subset X$
and
$y_k\in B(y,\frac1k)\cap B_k\subset Y$.
We then let
\begin{equation}
\delta(x,y)
=
\lim_{k\to\infty}\delta(x_k,y_k).
\end{equation}
A simple argument shows that this limit is independent of the choice of the approximating sequences.
It is also straightforward to check that the extended~$\delta$ is indeed a semimetric on~$W$.

We want to show that for every $x\in X$ there is a unique $y\in Y$ so that $\delta(x,y)=0$.
The triangle inequality and~$\delta$ agreeing with the metric on~$Y$ shows that the point is unique.
For existence, there is a point in $x_k\in A_k$ that is $\frac1k$-close to~$x$, and there is $y_k\in B_k$ that is $\frac1k$-close to~$x_k$.
We can then set $y=\lim_{k\to\infty}y_k$.
Similarly, each $y\in Y$ has a unique $x\in X$ so that $\delta(x,y)=0$.

This gives rise to a bijection $h\colon X\to Y$ that satisfies $\delta(x,h(x))=0$.
This can be checked to be an isometry.

Take any $\ell\in L$ and $k\in\N$.
We have $d_k(\alpha(\ell),\beta(\ell))<\frac1k$ by construction of~$d_k$, so in the limit of the subsequence that we got we find $\delta(\alpha(\ell),\beta(\ell))=0$.
This means that $h(\alpha(\ell))=\beta(\ell)$.
Therefore $h\circ\alpha=\beta$.
\end{proof}

We record two additional propositions.
The proofs are immediate and we omit them.

\begin{proposition}
\label{prop:lgh-labels}
Let~$X$ and~$Y$ be compact metric spaces and~$L$ any set.
Let $\alpha\colon L\to X$ and $\beta\colon L\to Y$ be any two functions.
If $L'\subset L$, then
\begin{equation}
\ghd{L'}(X,\alpha|_{L'};Y,\beta|_{L'})
\leq
\ghd{L}(X,\alpha;Y,\beta).
\end{equation}
As $\ghd{\emptyset}(X,\emptyset;Y,\emptyset)$ is the usual Gromov--Hausdorff distance between~$X$ and~$Y$, the labeled kind of convergence implies the usual kind of convergence.
\end{proposition}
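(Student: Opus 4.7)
The plan is to derive the inequality directly from the definition of $\ghd{L}$ by a monotonicity argument for suprema. First I would fix an arbitrary compact metric space $Z$ together with isometric embeddings $f\colon X\to Z$ and $g\colon Y\to Z$, and note that these same data are admissible triples for both $\ghd{L}(X,\alpha;Y,\beta)$ and $\ghd{L'}(X,\alpha|_{L'};Y,\beta|_{L'})$. The Hausdorff term $\hd{Z}(f(X),g(Y))$ is unchanged by the restriction of the label set, so the difference between the two cost functionals lies entirely in the label-supremum term.

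Next I would use $\alpha|_{L'}(\ell) = \alpha(\ell)$ and $\beta|_{L'}(\ell) = \beta(\ell)$ for $\ell\in L'$ to write
\begin{equation}
\sup_{\ell\in L'} d_Z(f(\alpha(\ell)),g(\beta(\ell)))
\leq
\sup_{\ell\in L} d_Z(f(\alpha(\ell)),g(\beta(\ell))),
\end{equation}
with the convention $\sup\emptyset = 0$ covering the degenerate case $L' = \emptyset$. Adding $\hd{Z}(f(X),g(Y))$ to both sides and taking the infimum over all admissible triples $(Z,f,g)$ then delivers the claimed inequality between the two labeled Gromov--Hausdorff distances.

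For the second assertion, I would apply the inequality with $L' = \emptyset$ to bound $\ghd{\emptyset}(X,\emptyset;Y,\emptyset)$ from above by $\ghd{L}(X,\alpha;Y,\beta)$; since $\ghd{\emptyset}$ coincides with the usual Gromov--Hausdorff distance by the convention on empty suprema, labeled convergence forces convergence in the ordinary sense. No step is technically difficult: the entire argument is an instance of monotonicity of suprema under enlargement of the index set, which is exactly why the authors note that the proof is immediate. The only care required is the consistent use of the empty-sup convention, but this is built into the definition of $\ghd{L}$ itself.
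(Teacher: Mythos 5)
Your proof is correct and is exactly the immediate argument the authors have in mind; the paper explicitly omits the proof with the remark that it is ``immediate.'' The key observation --- that for any fixed admissible triple $(Z,f,g)$ the Hausdorff term is unchanged while the label-supremum can only decrease when restricting from $L$ to $L'$, so the infimum over triples can only decrease --- is exactly right, and the second assertion follows by taking $L'=\emptyset$ as you describe.
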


\begin{proposition}
\label{prop:lgh-density}
Let~$X$ be a compact metric space and $Y\subset X$ a subset.
Let $\alpha\colon L\to X$ and $\beta\colon L\to Y$ be any two functions on a set $L$.
If $Y\subset X$ is $\eps_1$-dense and $\sup_{\ell\in L}\abs{d(\alpha(\ell),\beta(\ell))}\leq\eps_2$,
then
\begin{equation}
\ghd{L}(X,\alpha;Y,\beta)
\leq
\eps_1+\eps_2.
\end{equation}
\end{proposition}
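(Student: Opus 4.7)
The plan is to use the ambient space $X$ itself as the common metric space in the definition of $\ghd{L}$. Concretely, I take $Z = X$, the map $f = \id_X \colon X \to Z$, and $g \colon Y \to Z$ the inclusion. Both are tautologically isometric embeddings, so this is an admissible choice in the infimum defining the labeled Gromov--Hausdorff distance.

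Next I estimate the two terms that appear in the definition. For the Hausdorff part, since $Y \subset X$ we have $f(X) = X$ and $g(Y) = Y$, and the $\eps_1$-density assumption on $Y$ means that every point of $X$ lies within distance $\eps_1$ of some point of $Y$. Trivially every point of $Y$ lies within distance $0$ of itself as a point of $X$, so
\begin{equation}
\hd{Z}(f(X), g(Y)) = \hd{X}(X, Y) \leq \eps_1.
\end{equation}
For the label part, for each $\ell \in L$ we have $f(\alpha(\ell)) = \alpha(\ell)$ and $g(\beta(\ell)) = \beta(\ell)$, both viewed inside $X$, so
\begin{equation}
d_Z(f(\alpha(\ell)), g(\beta(\ell))) = d_X(\alpha(\ell), \beta(\ell)),
\end{equation}
and taking the supremum over $\ell$ gives a bound by $\eps_2$ from the hypothesis.

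Summing these two bounds and using that the infimum in the definition of $\ghd{L}$ is at most the value achieved by this particular $(Z,f,g)$ yields the desired estimate $\ghd{L}(X,\alpha;Y,\beta) \leq \eps_1 + \eps_2$. There is no real obstacle here: once one sees that taking $Z = X$ and using the identity and inclusion is a legitimate competitor in the infimum, the bound drops out immediately from the definitions. The only care needed is to recognize that $\beta$ maps into $Y$ so that $g \circ \beta$ makes sense, which is built into the hypothesis.
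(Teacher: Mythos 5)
Your proof is correct, and it is exactly the ``immediate'' argument the paper has in mind (the authors omit the proof, noting that Propositions~\ref{prop:lgh-labels} and~\ref{prop:lgh-density} are immediate). Taking $Z=X$ with $f=\id_X$ and $g$ the inclusion is the canonical competitor in the infimum, and the two bounds follow directly from the density and label hypotheses.
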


\section{Bounded geometry}
\label{sec:bdd-geom}

We begin our study of bounded geometry by proving Proposition~\ref{prop:bdd-geom} with the help of two lemmas, and then we move on to finding further estimates based on these basic bounds.

\begin{lemma}
\label{lma:global-logarithm}
Let~$M$ be a simple manifold, let $U_x\subset T_xM$ be the maximal domain of definition of the exponential map, and let $U\subset TM$ be the subset with fibers~$U_x$.
The map ${\exp}\times\pi\colon U\to M\times M$ that maps
\begin{equation}
U
\ni
(x,v)
\mapsto
(\exp_x(v),x)
\in
M\times M
\end{equation}
is a diffeomorphism and so has a smooth inverse $\theta\colon M\times M\to U\subset TM$.
\end{lemma}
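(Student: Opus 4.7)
The plan is to verify the three defining properties of a diffeomorphism in turn: smoothness of $\exp \times \pi$, bijectivity, and smoothness of the inverse via the inverse function theorem applied pointwise on $U$. Smoothness of $\exp \times \pi$ is immediate from smoothness of the exponential map, which in turn follows from the smooth dependence of geodesics on initial data.

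For bijectivity, fix any $(y,x) \in M \times M$. Simplicity guarantees that $x$ and $y$ are joined by a unique geodesic $\gamma$; after reparameterizing $\gamma$ so that $\gamma(0)=x$ and $\gamma(1)=y$, the vector $v = \dot\gamma(0) \in T_xM$ lies in $U_x$ by definition of the maximal domain, and it is the unique element of $U_x$ with $\exp_x(v)=y$. Hence $(x,v)$ is the unique preimage of $(y,x)$ in $U$.

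To show that the inverse is smooth, I would apply the inverse function theorem at an arbitrary $(x_0,v_0) \in U$. Using the Levi--Civita connection to split $T_{(x_0,v_0)}(TM) \cong T_{x_0}M \oplus T_{x_0}M$ into horizontal and vertical parts $(\delta x, \delta v)$, the differential of $\pi$ returns $\delta x$, while the differential of $(x,v) \mapsto \exp_x(v)$ equals $J(1)$ where $J$ is the Jacobi field along the geodesic $\gamma(t) = \exp_{x_0}(tv_0)$ with initial conditions $J(0) = \delta x$ and $\nabla_t J(0) = \delta v$. Suppose the total differential annihilates $(\delta x, \delta v)$; then $\delta x = 0$ from the $\pi$-component, so $J(0)=0$, and $J(1) = 0$ from the $\exp$-component. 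Since $M$ is simple, $\gamma$ has no conjugate points on its maximal interval, so a Jacobi field vanishing at two points must vanish identically; this forces $\nabla_t J(0) = \delta v = 0$. The differential is therefore injective, and hence bijective between spaces of equal dimension, so the inverse function theorem produces a smooth local inverse at $(x_0,v_0)$. Combined with the already established global bijectivity, this promotes ${\exp} \times \pi$ to a global diffeomorphism.

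The main subtlety to watch out for is that $M$ has boundary and $U \subset TM$ is a manifold with corners, so the "diffeomorphism" statement has to be interpreted up to the boundary. This is covered by the part of the definition of simplicity that requires the exponential map to be a diffeomorphism on its \emph{maximal} domain, which packages exactly the behavior near boundary points of $U_x$ that we need; I do not foresee any obstacle beyond being careful about this boundary bookkeeping when invoking the inverse function theorem at boundary points of $U$.
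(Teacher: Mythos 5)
Your proposal is correct and follows essentially the same route as the paper: establish smoothness and bijectivity, then apply the inverse function theorem by showing the differential is invertible, which reduces (via the block structure coming from $\pi(x,v)=x$) to the invertibility of $\der\exp_x$. Your detour through Jacobi fields and absence of conjugate points to re-derive that $\der\exp_x$ is injective is sound but unnecessary: the definition of a simple manifold already states that $\exp_x$ is a diffeomorphism on its maximal domain, so the invertibility of its differential can simply be invoked directly, as the paper does.
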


\begin{proof}
As each $\exp_x\colon U_x\to M$ is a diffeomorphism on a simple manifold, the map ${\exp}\times\pi$ is clearly smooth and bijective.
What remains to check is the invertibility of the differential at every point $(x,v)\in U$.
The differential has a convenient block structure due to $\pi(x,v)=x$ being independent of~$v$, and so $\der({\exp}\times\pi)$ is invertible at $(x,v)$ if and only if $\der\exp_x$ is invertible at~$v$.
The invertibility of~$\der\exp_x$ is true by assumption.
\end{proof}

\begin{lemma}
\label{lma:Christoffel-bound}
Let $W^1,\dots,W^n$ be vector fields on a simple manifold~$M$ constituting a global orthonormal frame.
Let $\CS{i}{j}{k}(x,y)$ be the Christoffel symbol at $x\in M$ with the normal coordinates centered at $y\in M$ with $W^1(y),\dots,W^n(y)$ as the coordinate basis for~$T_yM$.
Then
\begin{equation}
\sup_{x,y\in M}
\abs{\CS{i}{j}{k}(x,y)}
<
\infty
\end{equation}
for all indices $i,j,k$.
\end{lemma}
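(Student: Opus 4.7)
The plan is to exhibit $\Gamma^i_{jk}(x,y)$ as a continuous (in fact smooth) function on the compact product $M\times M$ and then appeal to compactness. Since simple manifolds are compact, this gives uniform boundedness immediately; the whole point is packaging the various smooth dependencies correctly.

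First, I would invoke Lemma~\ref{lma:global-logarithm} to obtain a smooth logarithm map $\theta\colon M\times M\to TM$ satisfying $\exp_y\theta(x,y)=x$. Since the $W^i$ form a smooth global frame, the functions $\theta^i(x,y)\coloneqq\ip{\theta(x,y)}{W^i(y)}$ are smooth on $M\times M$, and by definition $(\theta^1(x,y),\dots,\theta^n(x,y))$ are precisely the normal coordinates of $x$ centered at $y$ in the prescribed basis $W^1(y),\dots,W^n(y)$.

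Next, I would verify that the metric coefficients in these normal coordinates depend smoothly on the center $y$ as well as on the coordinate variable. For this, consider the joint exponential $E\colon U\to M$, $E(y,v)=\exp_y v$, which is smooth on the set $U\subset TM$ of Lemma~\ref{lma:global-logarithm}, and parametrize $v\in U_y$ by its components with respect to $W^i(y)$. For fixed $y$, $v\mapsto E(y,v)$ is the normal coordinate chart at $y$, so the metric coefficients in that chart are
\begin{equation}
\tilde g_{ij}(v;y)=g_{E(y,v)}\bigl(dE(\partial_{v^i}),\,dE(\partial_{v^j})\bigr),
\end{equation}
which is smooth in $(v,y)$. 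The Christoffel symbols $\tilde\Gamma^k_{ij}(v;y)$ are then the standard polynomial/rational expressions in $\tilde g_{ij}$, $\tilde g^{ij}$ and $\partial_{v^l}\tilde g_{ij}$, hence smooth on all of $U$.

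Composing the two steps, $\Gamma^i_{jk}(x,y)=\tilde\Gamma^i_{jk}(\theta(x,y);y)$ is a smooth function on $M\times M$, which is compact because $M$ is. A continuous function on a compact set is bounded, giving the claim. The main --- and essentially only --- subtle point is the joint smooth dependence of the normal-coordinate metric components on both the point $v$ and the center $y$; once the smooth logarithm and joint exponential from Lemma~\ref{lma:global-logarithm} are in hand, this amounts to an unpacking of definitions of coordinates, pullback of the metric, and the Christoffel formula.
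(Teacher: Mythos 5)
Your proof is correct and follows essentially the same strategy as the paper: use the smooth global logarithm from Lemma~\ref{lma:global-logarithm} and the global frame to exhibit the normal coordinates and then $\CS{i}{j}{k}(x,y)$ as a smooth function on the compact $M\times M$, and conclude by compactness. The only (cosmetic) difference is that you reach the Christoffel symbols via the metric components and the classical Christoffel formula, whereas the paper writes them directly as $w^i(\nabla_{w_j}w_k)$ in the coordinate frame.
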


There is a global orthonormal frame on every simple manifold given by vector fields~$W_i$ as stated.
Such a frame can be produced by the Gram--Schmidt method from a general frame coming from the trivializability of the tangent bundle.

\begin{proof}[Proof of Lemma~\ref{lma:Christoffel-bound}]
The vectors $W_1(y),\dots,W_n(y)$ constitute an orthonormal basis for~$T_yM$.
With the help of Lemma~\ref{lma:global-logarithm} we see that the basis vectors for~$T_xM$ in the normal coordinates of~$y$ are given by 
\begin{equation}
w_i(x,y)
\coloneqq
\der^{[x]}\exp_y(\theta(x,y))W_i(y).
\end{equation}
Here and later in this proof we indicate the variable of differentiation by superscript in [square brackets] when needed for clarity.

The invariant coordinate map for the normal coordinates of~$y$ is $\theta_y\colon M\to T_yM$ given by $\theta_y(x)=\theta(x,y)$.
Using the basis given by the frame, we get a proper coordinate map $\overline{\theta_y}\colon M\to\R^n$ given by $\overline{\theta_y}^i=W_i(y)^\flat\circ\theta_y$, which means
\begin{equation}
\overline{\theta_y}(x)
=
(
\ip{W_1(y)}{\theta_y(x)}
,
\dots
,
\ip{W_n(y)}{\theta_y(x)}
)
.
\end{equation}
Now we have a concrete description of the normal coordinates in terms of the frame.

Let us denote $w^i(x,y)=\der^{[x]}\overline{\theta_y}^i(x)\in T_x^*M$.
This is the differential of a coordinate map and thus a basis covector for~$T_x^*M$ induced by the normal coordinates of~$y$.
The corresponding basis vectors on~$T_xM$ are $w_i(x,y)$.
In terms of the coordinate map $z=\overline{\theta_y}\colon M\to\R^n$ the Christoffel symbols are given by
\begin{equation}
\begin{split}
\CS{i}{j}{k}(x,y)
&=
\der z^i(
\nabla_{\partial/\partial z^j}\frac{\partial}{\partial z^k}
)
\\&=
w^i(x,y)(
\nabla_{w_j(x,y)}^{[x]}w_k(x,y)
)
.
\end{split}
\end{equation}
All the items in this formula depend smoothly on~$x$ and~$y$, up to the boundary.
The claim then follows from compactness of $M\times M$.
\end{proof}

\begin{proof}[Proof of Proposition~\ref{prop:bdd-geom}]
Parts~\ref{bound:4}, \ref{bound:5}, \ref{bound:2}, and~\ref{bound:7} of Definition~\ref{def:bdd-geometry} follow easily from compactness.
We will prove the other parts more carefully.

Part~\ref{bound:6}:
Let us denote $y_i=\exp_x(\eta_i)\in M$.
In terms of Lemma~\ref{lma:Christoffel-bound} we have $\theta(y_i,x)=\eta_i$.
Let us equip~$TM$ with the Sasaki metric and $M\times M$ with the product metric.
An important property of the Sasaki metric on~$TM$ is that it respects the natural distance on the fibers: the Sasaki distance between $v,w\in T_xM$ is simply $\abs{v-w}$ because the fibers are totally geodesic~\cite{sasaki1958}.
Because~$\theta$ is a diffeomorphism between compact manifolds with boundary, it is Lipschitz-continuous.
The Lipschitz constant is denoted by~$\Cvii$, that is the maximum value of the norm of the Jacobian of~$\theta$.

Part~\ref{bound:1}:
Let us decompose an arbitrary Jacobi field~$J$ starting with $J(0)=0$ into parallel and normal components relative to~$\dot\gamma$:
\begin{equation}
J(t)
=
J^\parallel(t)
+
J^\perp(t).
\end{equation}
As $\abs{J(t)}^2=\abs{J^\parallel(t)}^2+\abs{J^\perp(t)}^2$ and both components are Jacobi fields, it suffices to prove the estimate separately for parallel and normal Jacobi fields.

If~$J$ is tangential to~$\gamma$ then $J(t)=tc\dot\gamma(t)$ for some $c\in\R$ and $\abs{J(t)}^2=c^2t^2$.
Thus for the tangential Jacobi fields the claim holds for any $\Ci\geq 2$.

Now we assume that~$J$ is normal to~$\dot{\gamma}$.
Let $\rho(x)=d(x,\gamma(0))$.
Then
\begin{equation}
\partial_t\abs{J(t)}^2
=
2\ip{D_t J}{J}
\leq
2\abs{D_t J}\abs{J}
=
2\abs{\Hess_\rho J}\abs{J}
\leq
2\abs{\lambda_{\mathrm{max}} (t)}\abs{J}^2,
\end{equation}
where $\lambda_{\mathrm{max}}(t)$ is the eigenvalue of the Hessian~$\Hess_\rho$ of~$\rho$ at~$\gamma(t)$ with the largest absolute value.

Let us choose normal coordinates at~$\gamma(0)$ as in Lemma~\ref{lma:Christoffel-bound}.
In these coordinates we have $\rho(x)=\abs{x}_e$ for all $x\in M$, where the subscript~$e$ stands for the Euclidean norm.
Geodesics with initial unit speed $v\in S_{\gamma(0)}M$ are given by $\gamma(t)=tv$.

Therefore the Hessian of the distance function~$\rho$ is given in these coordinates by
\begin{equation}
\label{eq:Hess-rho}
\Hess_\rho(x)
=
\frac{1}{|x|_e}
A_{ij}
\der x^i\otimes \der x^j,
\end{equation}
where
\begin{equation}
\label{eq:A-Hess-rho}
A_{ij}(x,\gamma(0))
=
\delta_{ij}-\frac{x_ix_j}{\abs{x}_e^2}-\CS{k}{i}{j}(x,\gamma(0))x_k.
\end{equation}
The numbers $A_{ij}$ are all bounded uniformly for all $\gamma(0)\in M$ and $x\in M\setminus\{\gamma(0)\}$ due to Lemma~\ref{lma:Christoffel-bound}.
Thus the Hessians~$\Hess_\rho$, as quadratic forms, are uniformly bounded by some constant~$\frac12 \Ci$, and
$\abs{\lambda_{\mathrm{max}} (t)}\leq\frac12\Ci$.
The claim follows.

Part~\ref{bound:8}:
This is similar to part~\ref{bound:1} and we continue with the same proof setup.
The point where everything is evaluated is $x=tv$, so by equations~\eqref{eq:Hess-rho} and~\eqref{eq:A-Hess-rho} we have
for all $w \in \dot\gamma(t)^\perp$
\begin{equation}
\ip{w}{\Hess_\rho w}
=
t^{-1}
[
\delta_{ij}w^iw^j
-
\CS{k}{i}{j}(tv)tv_kw^iw^j
]
.
\end{equation}
The bound on the Christoffel symbols implies that $\abs{\CS{k}{i}{j}(tv)}\leq C$ for some absolute constant $C\geq0$ independent of~$t$, $v$, and~$\gamma(0)$.
In the Euclidean norm $\abs{v^\flat}_e=1$, so $\abs{v_k}\leq1$.
Thus we find
\begin{equation}
\ip{w}{\Hess_\rho w}
\geq
t^{-1}\abs{w}_e^2
-
C\abs{w}_e^2
\end{equation}
for all $w \in \dot\gamma(t)^\perp$.
By the diffeomorphic nature of ${\exp}\times\pi$ of the proof of Lemma~\ref{lma:Christoffel-bound} the Euclidean norm of~$w$ is bi-Lipschitz equivalent with the Riemannian norm of~$w$, with bi-Lipschitz constants independent of~$x$, and~$\gamma(0)$.
This ensures that the desired two constants exist.
\end{proof}

We will next establish a number of estimates based on the results of Proposition~\ref{prop:bdd-geom}. Most of them we will rewrite in Section~\ref{sec:lentil} in the language of lentils.
Before getting started, we recall three standard comparison results, stated in a form suitable for our use.

We will only use Lemma~\ref{lma:rauch} to compare to constant sectional curvature, and we state the next two lemmas only in this setting.
We denote by~$M_\kappa$ the complete Riemannian manifold with constant sectional curvature $\kappa\in\R$, either the hyperbolic space, the Euclidean space, or the sphere, depending on the sign of~$\kappa$.

\begin{lemma}[{Rauch's comparison theorem \cite[Chapter 10, Theorem 2.3]{do1992riemannian}}]
\label{lma:rauch}
Let~$M$ be a simple Riemannian manifold with sectional curvature $\geq\kappa$.
Let~$J$ and~$\tilde J$ be Jacobi fields on unit speed geodesics~$\gamma$ and~$\tilde\gamma$ on the manifolds~$M$ and~$M_\kappa$, with $J(0)=0=\tilde J(0)$, $\ip{D_tJ(0)}{ \dot{\gamma}(0)}=\ip{D_t \tilde J(0)}{\dot{\tilde\gamma}(0)}$ and $\abs{D_tJ(0)}=\abs{D_t\tilde J(0)}$.
Then
\begin{equation}
\abs{J(t)}
\leq
\abs{\tilde J(t)}
\end{equation}
for all $t>0$ for which both geodesics are defined.
\end{lemma}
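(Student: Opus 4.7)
The plan is the classical index-form proof of Rauch's theorem. First, I would decompose both fields into their tangential and normal components along $\dot\gamma$ and $\dot{\tilde\gamma}$. A Jacobi field with $J(0)=0$ has tangential part of the form $tc\dot\gamma(t)$, where the scalar $c$ is determined by $\ip{D_tJ(0)}{\dot\gamma(0)}$. The hypothesis that this inner product agrees on the two sides forces $\abs{J^\parallel(t)} = \abs{\tilde J^\parallel(t)}$ for all admissible~$t$. Together with $\abs{J}^2 = \abs{J^\parallel}^2 + \abs{J^\perp}^2$ this reduces the claim to proving $\abs{J^\perp(t)} \leq \abs{\tilde J^\perp(t)}$, now with the remaining hypothesis $\abs{D_tJ^\perp(0)} = \abs{D_t\tilde J^\perp(0)}$.

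Second, I would invoke the Morse index form
\[
I^T(V,W) = \int_0^T \left( \ip{D_sV}{D_sW} - \ip{R(V,\dot\gamma)\dot\gamma}{W} \right) \der s
\]
acting on normal vector fields along $\gamma|_{[0,T]}$, together with its analogue on $\tilde\gamma$. Since $M$ is simple, $\gamma$ carries no conjugate points, so for each admissible $T$ the normal Jacobi field $J^\perp$ minimizes $I^T_M(\cdot,\cdot)$ among normal fields vanishing at $0$ and equal to $J^\perp(T)$ at $T$, with minimum value $\ip{D_tJ^\perp(T)}{J^\perp(T)}$. I would then transfer $\tilde J^\perp$ to a competitor $V$ on $\gamma$ by choosing parallel orthonormal frames along both geodesics whose initial values correspond (identifying $\dot\gamma(0)$ with $\dot{\tilde\gamma}(0)$ and the normalized $D_tJ^\perp(0)$ with the normalized $D_t\tilde J^\perp(0)$) and declaring that $V(s)$ has the same frame coordinates as $\tilde J^\perp(s)$. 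Then $\abs{V(s)} = \abs{\tilde J^\perp(s)}$ and $\abs{D_sV(s)} = \abs{D_s\tilde J^\perp(s)}$, while the curvature bound gives $\ip{R^M(V,\dot\gamma)\dot\gamma}{V} \geq \kappa\abs{V}^2 = \ip{R^{M_\kappa}(\tilde J^\perp,\dot{\tilde\gamma})\dot{\tilde\gamma}}{\tilde J^\perp}$, yielding $I^T_M(V,V) \leq I^T_{M_\kappa}(\tilde J^\perp,\tilde J^\perp) = \ip{D_t\tilde J^\perp(T)}{\tilde J^\perp(T)}$.

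Finally, I would rescale $V$ by the scalar $\abs{J^\perp(T)}/\abs{\tilde J^\perp(T)}$ so that its endpoint matches $J^\perp(T)$; minimality of $J^\perp$ for $I^T_M$ then gives
\[
\frac{\ip{D_tJ^\perp(T)}{J^\perp(T)}}{\abs{J^\perp(T)}^2} \leq \frac{\ip{D_t\tilde J^\perp(T)}{\tilde J^\perp(T)}}{\abs{\tilde J^\perp(T)}^2}.
\]
The two sides equal $\tfrac12 \partial_T \log\abs{J^\perp(T)}^2$ and $\tfrac12\partial_T \log\abs{\tilde J^\perp(T)}^2$, so $\partial_T \log(\abs{J^\perp}/\abs{\tilde J^\perp}) \leq 0$. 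A L'H\^opital computation at $T\to 0^+$ shows this ratio tends to $1$, so it stays $\leq 1$ throughout, giving $\abs{J^\perp(T)} \leq \abs{\tilde J^\perp(T)}$. Combined with the tangential equality this reassembles into $\abs{J(T)} \leq \abs{\tilde J(T)}$. The main obstacle is the construction of $V$ in the middle step: one must engineer the frame identification so that $V$ remains normal and the curvature comparison lines up as stated, and one must ensure $\abs{\tilde J^\perp(T)} > 0$ throughout the admissible interval so the rescaling is legitimate --- this is automatic on $M_\kappa$ as long as both geodesics are defined, since conjugate distance on a space of constant curvature is at least as large as the shared parameter range on which everything is still defined.
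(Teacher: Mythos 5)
The paper does not prove this lemma: it is stated as a citation of do Carmo's version of Rauch's comparison theorem (\cite[Chapter~10, Theorem~2.3]{do1992riemannian}), so there is no in-paper argument to compare against. Your sketch reconstructs the classical index-form proof, which is the right approach, but there is one genuine gap in the competitor construction. You align the parallel orthonormal frames at $t=0$ (identifying the normalized $D_tJ^\perp(0)$ with the normalized $D_t\tilde J^\perp(0)$), which guarantees $\abs{V(T)}=\abs{\tilde J^\perp(T)}$ but leaves the \emph{direction} of $V(T)$ in the normal space at $\gamma(T)$ essentially arbitrary. Rescaling by the scalar $\abs{J^\perp(T)}/\abs{\tilde J^\perp(T)}$ then fixes the length of the endpoint but cannot make $V(T)$ equal the vector $J^\perp(T)$, so the index-minimizing property of $J^\perp$ (which compares against fields sharing the \emph{same} endpoint vector, not merely the same endpoint norm) does not apply. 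The standard fix is to align the frames at the terminal time $T$ instead, taking $e_1(T)=J^\perp(T)/\abs{J^\perp(T)}$ and $\tilde e_1(T)=\tilde J^\perp(T)/\abs{\tilde J^\perp(T)}$ and extending by parallel transport, so that $V(T)$ is a positive multiple of $J^\perp(T)$ by construction; since $T$ is the quantity you differentiate in, the frame must be rebuilt for each $T$, but this costs nothing. With that correction the curvature comparison, the resulting monotonicity of $\log\bigl(\abs{J^\perp}/\abs{\tilde J^\perp}\bigr)$, and the L'H\^opital (or Taylor) computation at $T\to0^+$ using $\abs{D_tJ^\perp(0)}=\abs{D_t\tilde J^\perp(0)}$ all go through as you describe, and the tangential equality reassembles the full estimate.
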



Consider a simple Riemannian manifold~$M$ with sectional curvature bounded from above by~$\Cvi$.
For any three points $x,y,z\in M$ we want to pick three points $\tilde x,\tilde y,\tilde z\in M_{\Cvi}$ so that
\begin{equation}
d(x,y)=d(\tilde x,\tilde y), \quad
d(x,z)=d(\tilde x,\tilde z), \quad
d(y,z)=d(\tilde y,\tilde z)
\end{equation}
or
\begin{equation}
d(x,y)=d(\tilde x,\tilde y), \quad
d(x,z)=d(\tilde x,\tilde z), \quad
\angle yxz=\angle \tilde y\tilde x\tilde z.
\end{equation}
We denote distances on both manifolds by~$d$, as there should be no confusion as to which space each point belongs.
As long as~$M$ satisfies the estimate~\eqref{eq:diameter-curvature-bound}, such points on~$M_{\Cvi}$ exist.

The following two lemmas compare triangles on~$M$ with corresponding triangles on~$M_\kappa$.
They both follow from Rauch's comparison theorem, although not exactly the same version as Lemma~\ref{lma:rauch} above.
We recall from Definition~\ref{def:bdd-geometry} that the sectional curvature is bounded from above by the constant $\Cvi>0$.

\begin{lemma}[{\cite[Chapter 10, Proposition 2.5]{do1992riemannian}}]
\label{lma:curvature-comparison2}
Let~$M$ be a simple manifold whose constants (of Definition~\ref{def:bdd-geometry} and Proposition~\ref{prop:bdd-geom}) satisfy the constraint~\eqref{eq:diameter-curvature-bound}.
Take any three points $x,y,z\in M$.
Then there are points $\tilde x,\tilde y,\tilde z\in M_{\Cvi}$ so that $d(x,y)=d(\tilde x,\tilde y)$,
$d(x,z)=d(\tilde x,\tilde z)$,
and the angle is the same at~$x$ and~$\tilde x$.
The remaining distances satisfy $d(y,z)\leq d(\tilde y,\tilde z)$.
\end{lemma}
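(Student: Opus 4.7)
My plan is to apply Rauch's comparison theorem (Lemma~\ref{lma:rauch}) to a variation of geodesics transferred between $M$ and $M_{\Cvi}$, in the spirit of the proof in the cited proposition of do Carmo.

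First I would establish existence of the comparison triangle. The constraint $\Civ\sqrt{\Cvi}<\pi$ forces $d(x,y)$ and $d(x,z)$, both bounded by $\Civ$, to lie strictly below the diameter $\pi/\sqrt{\Cvi}$ of the model sphere $M_{\Cvi}$, and hence below the conjugate radius at any point of $M_{\Cvi}$. Picking any base point $\tilde x\in M_{\Cvi}$ and any two unit tangent vectors at $\tilde x$ making the angle $\angle yxz$, I exponentiate to lengths $d(x,y)$ and $d(x,z)$ to obtain the vertices $\tilde y$ and $\tilde z$. The resulting triangle sits inside a geodesic ball of $M_{\Cvi}$ on which $\exp_{\tilde x}$ is a diffeomorphism, so that inverse exponentials based at $\tilde x$ are defined unambiguously along every radial segment of the triangle.

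Next I would construct a curve from $y$ to $z$ in $M$ by lifting the geodesic segment from $\tilde y$ to $\tilde z$. Let $\tilde\sigma\colon[0,d(\tilde y,\tilde z)]\to M_{\Cvi}$ be the minimizing unit-speed geodesic and write $\tilde\sigma(s)=\exp_{\tilde x}(\tilde u(s))$ with $\tilde u(s)\in T_{\tilde x}M_{\Cvi}$. Let $\phi\colon T_{\tilde x}M_{\Cvi}\to T_xM$ be a linear isometry sending the initial velocities of the radial geodesics from $\tilde x$ to $\tilde y$ and $\tilde z$ onto the initial velocities of $\gamma_{xy}$ and $\gamma_{xz}$. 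Define $\sigma(s)=\exp_x(\phi(\tilde u(s)))$; this is well defined because $\exp_x$ is a diffeomorphism on its maximal domain on the simple manifold $M$, and it connects $y$ to $z$, so $d(y,z)\leq\mathrm{length}(\sigma)$. At each $s$ I would decompose $\dot{\tilde\sigma}(s)$ into its radial and transverse parts at $\tilde\sigma(s)$ and note that under the lift the radial part is preserved isometrically, while the transverse part is realized as the value at radius $|\tilde u(s)|$ of a normal Jacobi field on a radial geodesic. The corresponding object on $M$ is the Jacobi field with matching initial conditions transferred by $\phi$, and Rauch's theorem controls one in terms of the other. The resulting pointwise speed inequality integrates to $\mathrm{length}(\sigma)\leq d(\tilde y,\tilde z)$, which gives the claim.

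The main obstacle is the correct deployment of Rauch's comparison against the available one-sided curvature bound. Since Lemma~\ref{lma:rauch} is phrased with a lower sectional curvature bound but the hypothesis here is an upper bound on $\sec_M$, one must carefully identify which of $M$ and $M_{\Cvi}$ plays the role of the lower-curvature manifold in the Rauch pairing so that the Jacobi field comparison points in the direction needed for the length inequality. A secondary subtlety is to keep $\tilde u(s)$ inside the injectivity region of $\exp_{\tilde x}$ for every $s$; this is where the bound~\eqref{eq:diameter-curvature-bound} is essential, both to realize the comparison triangle in the first place and to rule out conjugate points on the radial geodesics over which Rauch is applied.
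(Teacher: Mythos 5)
Your plan --- lifting a geodesic between comparison triangles and integrating Rauch's Jacobi-field bound over the radial/transverse decomposition --- is indeed the structure of do Carmo's Proposition~2.5, and your handling of the existence of the comparison triangle, the choice of the linear isometry~$\phi$, and the injectivity of the relevant exponential maps under~\eqref{eq:diameter-curvature-bound} is all fine. The obstacle you flag at the end, however, is not a matter of careful bookkeeping; it is a genuine gap that breaks the argument.

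With $\sec_M\leq\Cvi=\sec_{M_{\Cvi}}$, the manifold~$M$ is the one of \emph{lower} curvature. Rauch then makes Jacobi fields vanishing at the hinge vertex grow at least as fast on~$M$ as on~$M_{\Cvi}$, so your lift~$\sigma$ of the geodesic~$\tilde\sigma$ from~$M_{\Cvi}$ to~$M$ has pointwise speed at least $\abs{\dot{\tilde\sigma}(s)}$, giving $\mathrm{length}(\sigma)\geq d(\tilde y,\tilde z)$ rather than $\leq$. Combined with the trivial $d(y,z)\leq\mathrm{length}(\sigma)$ this yields no comparison at all. To compare third sides one must lift the other way: push the minimizing geodesic from~$y$ to~$z$ on~$M$ forward to a curve joining~$\tilde y$ to~$\tilde z$ in~$M_{\Cvi}$; by Rauch that pushforward is \emph{shorter}, which gives $d(\tilde y,\tilde z)\leq d(y,z)$ --- the reverse of the inequality in the statement. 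A concrete check confirms the direction: a right-angled hinge with legs of length $\pi/4$ in the flat plane (so $\sec_M=0\leq 1$) has hypotenuse $\pi\sqrt{2}/4\approx 1.11$, while the same hinge on the round sphere $M_1$ has hypotenuse $\arccos(1/2)=\pi/3\approx 1.05$. So no deployment of Rauch against the \emph{upper} curvature bound can deliver $d(y,z)\leq d(\tilde y,\tilde z)$ for a comparison on~$M_{\Cvi}$; the inequality as stated would instead require comparing against the lower-curvature model~$M_{-\Cv}$ using the lower sectional curvature bound, or else the sign in the lemma needs to be reversed. In short: the proof cannot be completed along the lines you propose, and the step where you assert $\mathrm{length}(\sigma)\leq d(\tilde y,\tilde z)$ is exactly where it fails.
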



Finally we give a comparison result for the angles of geodesic triangles. 

\begin{lemma}[{\cite[Theorem 2.7.6]{klingenberg}}]
\label{lma:curvature-comparison1}
Let~$M$ be a simple manifold whose constants (of Definition~\ref{def:bdd-geometry} and Proposition~\ref{prop:bdd-geom}) satisfy the constraint~\eqref{eq:diameter-curvature-bound}.
Take any three points $x,y,z\in M$.
Then there are points $\tilde x,\tilde y,\tilde z\in M_{\Cvi}$ so that
$d(x,y)=d(\tilde x,\tilde y)$,
$d(x,z)=d(\tilde x,\tilde z)$,
and
$d(y,z)=d(\tilde y,\tilde z)$,
and the angle at~$x$ is at most that at~$\tilde x$.
\end{lemma}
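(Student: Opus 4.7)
The plan is to reduce Lemma~\ref{lma:curvature-comparison1} to the hinge-type comparison already recorded in Lemma~\ref{lma:curvature-comparison2} by means of a monotonicity argument in the model space~$M_{\Cvi}$. Since the statement is quoted from~\cite{klingenberg}, I will outline the standard Toponogov-style argument rather than reconstruct every detail.

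First, I would build the comparison triangle. Given $x,y,z\in M$, set $a=d(y,z)$, $b=d(x,z)$, $c=d(x,y)$. I need to check that a triangle with these three side lengths can be realized in~$M_{\Cvi}$. By the triangle inequality on~$M$, the three lengths satisfy the usual triangle inequalities, and by the diameter bound~\ref{bound:4} of Definition~\ref{def:bdd-geometry} each length is at most~$\Civ$. The assumption $\Civ\sqrt{\Cvi}<\pi$ from~\eqref{eq:diameter-curvature-bound} ensures that the perimeter is strictly less than $2\pi/\sqrt{\Cvi}$, which is the threshold below which a spherical triangle with prescribed side lengths exists and is unique up to congruence. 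Pick $\tilde x,\tilde y,\tilde z\in M_{\Cvi}$ realizing these three distances.

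Second, I would apply the hinge comparison. Let $\alpha=\angle yxz$ be the angle at~$x$ on~$M$ and $\tilde\alpha=\angle\tilde y\tilde x\tilde z$ be the angle at~$\tilde x$ on~$M_{\Cvi}$. By Lemma~\ref{lma:curvature-comparison2}, if I instead erect on~$M_{\Cvi}$ a triangle with vertex~$\tilde x'$ having the two adjacent sides of lengths $b,c$ and the enclosed angle equal to the genuine~$\alpha$, then the opposite side in this new triangle has length at least $a=d(y,z)$. Thus in the model space~$M_{\Cvi}$ there are two triangles with the same two side lengths $b,c$ at a common vertex: one with enclosed angle~$\alpha$ and opposite side of length $\geq a$, and one (the original comparison triangle) with enclosed angle~$\tilde\alpha$ and opposite side of length exactly~$a$.

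Third, I would invoke monotonicity of the model-space law of cosines. In~$M_{\Cvi}$, with two sides of fixed length $b,c$ at a common vertex and both sides together with the opposite side of length $\leq \Civ$ so that the whole configuration stays below the conjugate-point threshold guaranteed by~\eqref{eq:diameter-curvature-bound}, the length of the opposite side is a strictly increasing function of the enclosed angle on~$(0,\pi)$. Comparing the two model triangles above, the one with the larger opposite side must have the larger enclosed angle, so $\alpha\leq\tilde\alpha$, which is the desired inequality. The main delicate point to check is that one stays in the regime of uniqueness and monotonicity for the model law of cosines, and this is exactly where the hypothesis~\eqref{eq:diameter-curvature-bound} is used; everything else is a direct consequence of Lemma~\ref{lma:curvature-comparison2} applied in the right direction.
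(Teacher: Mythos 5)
The paper does not actually prove Lemma~\ref{lma:curvature-comparison1}; it is quoted verbatim from Klingenberg's book. So you are supplying a derivation where the paper supplies only a citation. Your outline correctly identifies the standard route — realize the comparison triangle in $M_{\Cvi}$ (existence guaranteed by~\eqref{eq:diameter-curvature-bound}), apply the hinge comparison of Lemma~\ref{lma:curvature-comparison2}, and close with the monotonicity of the opposite side as a function of the enclosed angle in the model space. That is indeed how the angle comparison follows from the hinge comparison, and your check that the perimeter stays below $2\pi/\sqrt{\Cvi}$ is exactly the point at which~\eqref{eq:diameter-curvature-bound} enters.

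However, the last sentence of your argument contradicts the sentences before it. You state that the hinge triangle in $M_{\Cvi}$ (two sides $b,c$, enclosed angle equal to the true $\alpha$) has opposite side of length \emph{at least} $a$, while the comparison triangle (enclosed angle $\tilde\alpha$) has opposite side exactly $a$. Monotonicity — larger opposite side means larger enclosed angle — then forces $\alpha \ge \tilde\alpha$, the opposite of what you assert. The root cause is the direction of the hinge inequality. Since $M$ has sectional curvature bounded \emph{above} by $\Cvi$, it is flatter than $M_{\Cvi}$, and a flatter space has \emph{longer} opposite sides for a fixed hinge (compare a right triangle with unit legs in $\R^2$, hypotenuse $\sqrt2$, with the same hinge on the unit sphere, hypotenuse $\arccos(\cos^2 1)\approx 1.27$). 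So the correct hinge comparison in this setting is $d(y,z) \ge d(\tilde y,\tilde z)$ — i.e., your hinge triangle has opposite side \emph{at most} $a$ — and then monotonicity does give $\alpha \le \tilde\alpha$ as desired. (You appear to have inherited the reversed sign from the statement of Lemma~\ref{lma:curvature-comparison2}, which as printed is inconsistent with Lemma~\ref{lma:curvature-comparison1} and with the flat-versus-sphere sanity check above.) The overall structure of your reduction is sound; fix the direction of the hinge inequality so the final deduction actually follows from the stated premises.
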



We will then proceed to estimates specific to our setting.

\begin{lemma}
\label{lma:JF-growth}
Let~$M$ be a simple Riemannian manifold of bounded geometry.
Let~$J$ be a Jacobi field along any constant (not necessarily unit)
speed geodesic with $J(0)=0$.
Then
\begin{equation}
\abs{J(t)}
\leq
\Cxiii
\abs{D_tJ(0)}
t
\end{equation}
for all $t>0$, where the constant is given by~\eqref{eq:c13}.
\end{lemma}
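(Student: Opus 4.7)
The plan is to reduce to the unit-speed case and then apply Rauch's comparison theorem (Lemma~\ref{lma:rauch}) with the lower sectional curvature bound $-\Cv$ coming from Definition~\ref{def:bdd-geometry}. Writing $c = |\dot\gamma|$ and $\tilde\gamma(s) = \gamma(s/c)$, the reparameterized Jacobi field $\tilde J(s) = J(s/c)$ satisfies $\tilde J(0) = 0$ and $D_s\tilde J(0) = c^{-1} D_t J(0)$. Any linear estimate $|\tilde J(s)| \leq C s |D_s\tilde J(0)|$ translates directly into $|J(t)| \leq C t |D_tJ(0)|$ because the factors of $c$ cancel, so it suffices to work along $\tilde\gamma$.

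I would then split $\tilde J = \tilde J^\parallel + \tilde J^\perp$ into components parallel and normal to $\dot{\tilde\gamma}$; both pieces are themselves Jacobi fields vanishing at $s=0$. The parallel part is explicit: the curvature term in the Jacobi equation vanishes for $\tilde J^\parallel$, so $D_s^2 \tilde J^\parallel = 0$ and therefore $|\tilde J^\parallel(s)| = s\, |D_s \tilde J^\parallel(0)|$. The normal part is handled by Rauch's theorem applied with $\kappa = -\Cv$: choosing a Jacobi field on the hyperbolic model $M_{-\Cv}$ with the same (normal, thus automatically tangentially matching) initial derivative as $\tilde J^\perp$ gives the explicit comparison length $|D_s\tilde J^\perp(0)| \sinh(\sqrt{\Cv}\, s)/\sqrt{\Cv}$, so
\begin{equation}
|\tilde J^\perp(s)| \leq |D_s\tilde J^\perp(0)| \frac{\sinh(\sqrt{\Cv}\, s)}{\sqrt{\Cv}}.
\end{equation}

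The key nontrivial ingredient is that $s$ ranges over a bounded interval: on a simple manifold every geodesic is distance-minimizing between its endpoints, so its length is at most the diameter $\Civ$ and thus $s \leq \Civ$. Monotonicity of $x \mapsto \sinh(x)/x$ then upgrades the $\sinh$ estimate to a linear one, $\sinh(\sqrt{\Cv}\,s)/\sqrt{\Cv} \leq s\, \sinh(\sqrt{\Cv}\,\Civ)/(\sqrt{\Cv}\,\Civ)$. Combining the parallel and normal bounds via the orthogonal decomposition $|D_s \tilde J(0)|^2 = |D_s \tilde J^\parallel(0)|^2 + |D_s \tilde J^\perp(0)|^2$ yields $|\tilde J(s)| \leq \Cxiii s |D_s\tilde J(0)|$, with $\Cxiii$ the factor arising from the estimates above (essentially $\sinh(\sqrt{\Cv}\,\Civ)/(\sqrt{\Cv}\,\Civ)$, which is at least $1$ and dominates the linear coefficient from the tangential component). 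Pulling back through the reparameterization then gives the claim for arbitrary constant-speed $\gamma$. The proof has no substantial obstacle; the only item requiring care is verifying that Rauch's hypotheses hold for the normal part (matching tangential components of the initial derivatives and matching magnitudes), which is automatic once one restricts to $\tilde J^\perp$ and picks the comparison field to be normal as well.
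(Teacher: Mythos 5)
Your proof is correct and follows essentially the same route as the paper: Rauch's comparison against the constant-curvature model $M_{-\Cv}$ to get a $\sinh$ bound, the diameter bound $s\leq\Civ$ plus monotonicity of $x\mapsto\sinh(x)/x$ to upgrade that to a linear bound with constant $\Cxiii$, and a reparameterization argument for non-unit speeds. The one small organizational difference is that you explicitly decompose $\tilde J=\tilde J^\parallel+\tilde J^\perp$ before applying Rauch and handle the tangential part separately, whereas the paper applies its Rauch statement directly to the full Jacobi field; your decomposition is in fact the slightly more careful treatment, since the closed form $A(-k)^{-1/2}\sinh(t\sqrt{-k})$ only describes the normal part of a model-space Jacobi field with $J(0)=0$.
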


\begin{proof}
Let us first prove the claim for unit speed geodesics.

In constant sectional curvature $k=-\Cv<0$ every Jacobi field~$J$ with $J(0)=0$ and $\abs{D_tJ(0)}=1$ is of the form $J(t)=A(-k)^{-1/2}\sinh(t\sqrt{-k})$ with a parallel unit vector~$A$; see e.g. \cite[Proposition 10.12]{lee2018introduction}.

If~$M$ has sectional curvature $\geq k$, then by Lemma~\ref{lma:rauch} every Jacobi field~$J$ with $J(0)=0$ and $\abs{D_tJ(0)}=1$ satisfies $\abs{J(t)}\leq(-k)^{-1/2}\sinh(t\sqrt{-k})$ before a conjugate point.
As~$M$ is simple, there are no conjugate points.

The function $x\mapsto x^{-1}\sinh(x)$ is increasing for $x \in [0,\infty)$ and $\diam(M)\leq\Civ$.
Thus we get the desired estimate with the constant
\begin{equation}
\sup_{t\in (0,\Civ)}
t^{-1}(-k)^{-1/2}\sinh(t\sqrt{-k})
\leq 
\frac{\sinh(\Civ\sqrt{\Cv})}{\Civ\sqrt{\Cv}}
=
\Cxiii
\end{equation}
as claimed.

Consider then a geodesic~$\hat\gamma$ with non-unit constant speed $\lambda=\abs{\dot{\hat\gamma}(0)}>0$.
It can be written as $\hat\gamma(t)=\gamma(\lambda t)$ for a unit speed geodesic~$\gamma$.

Let $\Gamma(t,s)$ be a family of geodesics so that $\Gamma(t,0)=\gamma(t)$ and $J(t)=\partial_s\Gamma(t,s)|_{s=0}$ for a Jacobi field~$J$ along~$\gamma$ satisfying the assumptions of the claim.
Consider then the new Jacobi field
$\hat J(t')=\partial_{s'}\Gamma(\lambda t',\lambda^{-1}s')|_{s'=0}$ along the geodesic~$\hat\gamma$.
The scaling of the two parameters was chosen so that~$\hat J$ is a vector field along~$\hat\gamma$ and
$
D_{t'}\hat J(0)
=
D_tJ(0)
$,
so that both covariant derivatives have unit norm at $\gamma(0)=\hat\gamma(0)$.
As we have $\hat J(t')=\lambda^{-1}J(\lambda t')$, the already proven claim for~$J$ along~$\gamma$ gives
\begin{equation}
\begin{split}
\abs{\hat J(t')}
&=
\lambda^{-1}
\abs{\hat J(\lambda t')}
\\&\leq
\lambda^{-1}
\Cxiii
\lambda t'
\\&=
\Cxiii t'.
\end{split}
\end{equation}
This is the claimed estimate for~$\hat J$.
\end{proof}

\begin{lemma}
\label{lma:geodesic-diversion}
Let $\gamma_1,\gamma_2\colon[0,1]\to M$ be any two constant (not necessarily unit) speed geodesics with $\gamma_1(0)=\gamma_2(0)$.
Then
\begin{equation}
d(\gamma_1(t),\gamma_2(t))
\leq
\Cxiv
t
d(\gamma_1(1),\gamma_2(1))
\end{equation}
for all $t\in(0,1)$, where the constant is given by~\eqref{eq:c14}.
\end{lemma}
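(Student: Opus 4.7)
The plan is to bound $d(\gamma_1(t),\gamma_2(t))$ by the length of a canonical curve joining $\gamma_1(t)$ to $\gamma_2(t)$, realized as a transverse variation of a geodesic family through $p := \gamma_1(0) = \gamma_2(0)$, and then to control that length with Lemma~\ref{lma:JF-growth} together with part~\ref{bound:6} of Definition~\ref{def:bdd-geometry}.

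Concretely, write $v_i = \dot\gamma_i(0)\in T_pM$, so $\gamma_i(t)=\exp_p(tv_i)$. I would set $v(s)=(1-s)v_1+sv_2$ and define the variation
\begin{equation}
\Gamma(s,t)=\exp_p(tv(s)),
\end{equation}
which is well-defined on $[0,1]\times[0,1]$ provided the segment $\{v(s):s\in[0,1]\}$ lies in the maximal domain of $\exp_p$; since $M$ is simple, this can be arranged by viewing $M$ as a subset of a slightly larger simple extension, and the bounds below are intrinsic comparison estimates unaffected by this extension. For each fixed $s$ the curve $t\mapsto\Gamma(s,t)$ is a constant-speed geodesic starting at $p$, so the transverse vector field $J_s(t)=\partial_s\Gamma(s,t)$ is a Jacobi field along it with $J_s(0)=0$ and $D_tJ_s(0)=v'(s)=v_2-v_1$.

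Lemma~\ref{lma:JF-growth} then yields $\abs{J_s(t)}\leq\Cxiii\,\abs{v_2-v_1}\,t$ uniformly in $s\in[0,1]$. For each $t$ the curve $s\mapsto\Gamma(s,t)$ joins $\gamma_1(t)$ to $\gamma_2(t)$, so
\begin{equation}
d(\gamma_1(t),\gamma_2(t))
\leq
\int_0^1\abs{J_s(t)}\,\der s
\leq
\Cxiii\,\abs{v_2-v_1}\,t.
\end{equation}
Finally, part~\ref{bound:6} of Definition~\ref{def:bdd-geometry} applied at $x=p$ with $\eta_i=v_i$ gives
\begin{equation}
\abs{v_2-v_1}
\leq
\Cvii\,d(\exp_p v_1,\exp_p v_2)
=
\Cvii\,d(\gamma_1(1),\gamma_2(1)),
\end{equation}
and combining the two estimates produces the claim with $\Cxiv=\Cxiii\Cvii$.

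The main obstacle I anticipate is the technical point that the straight-line segment $v(s)$ in $T_pM$ might exit the maximal domain of $\exp_p$, since on a general simple manifold this domain is only star-shaped, not necessarily convex. The resolution is to work in a slightly larger simple ambient manifold $\tilde M\supset M$ without conjugate points; the comparison estimate in Lemma~\ref{lma:JF-growth} only uses an upper bound on sectional curvature via Rauch, and part~\ref{bound:6} is a statement about the differential of $\exp_p$, both of which persist under such an extension. Once this is set up, the rest is a clean three-line computation.
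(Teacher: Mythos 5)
Your proposal is correct and takes essentially the same route as the paper: linearly interpolate between the initial velocities in $T_pM$, view the resulting transverse variation field as a Jacobi field vanishing at $t=0$, bound its norm by Lemma~\ref{lma:JF-growth}, integrate in $s$ to estimate $d(\gamma_1(t),\gamma_2(t))$, and finally convert $\abs{v_2-v_1}$ to $d(\gamma_1(1),\gamma_2(1))$ via part~\ref{bound:6} of Definition~\ref{def:bdd-geometry}, yielding $\Cxiv=\Cvii\Cxiii$ exactly as in~\eqref{eq:c14}.

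The one thing you add is the explicit acknowledgment that the straight segment $v(s)$ in $T_pM$ may leave the (merely star-shaped) maximal domain of $\exp_p$; the paper writes down the variation without comment. Your fix — pass to a slightly larger simple extension $\tilde M\supset M$, where the variation is defined, and note that for interior points the strictly convex boundary forces $d_{\tilde M}=d_M$ so the resulting estimate descends to $M$ — is the standard way to patch this, and is worth making explicit given that the constants depend only on geometric bounds that vary continuously with the extension.
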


\begin{proof}
To do so, we write each geodesic as $\gamma_i(t)=\exp_x(t\eta_i)$.
Let~$\eta_s$ with $s\in[1,2]$ interpolate linearly between~$\eta_1$ and~$\eta_2$.

Then we may estimate the distance between the two required points by taking the $s$-curve with~$t$ constant.
First, the Jacobi field growth estimate of Lemma~\ref{lma:JF-growth} gives
\begin{equation}
\abs{\partial_s\exp_x(t\eta_s)}
\leq
\Cxiii
t
\abs{\eta_2-\eta_1}
.
\end{equation}
Therefore
\begin{equation}
\label{eq:vv68}
d(\gamma_1(t),\gamma_2(t))
\leq
\int_1^2\abs{\partial_s\exp_x(t\eta_s)}\der s
\leq
t\Cxiii\abs{\eta_2-\eta_1}
.
\end{equation}
By the uniform invertibility of the exponential maps due to bounded geometry we have
\begin{equation}
\abs{\eta_2-\eta_1}
\leq
\Cvii
d(\exp_x\eta_1,\exp_x\eta_2)
\end{equation}
and thus~\eqref{eq:vv68} turns into
\begin{equation}
d(\gamma_1(t),\gamma_2(t))
\leq
t\Cxiii\Cvii d(\gamma_1(1),\gamma_2(1)),
\end{equation}
as claimed.
\end{proof}

\section{Density estimates in bounded geometry}
\label{sec:density}

\subsection{Lentils}
\label{sec:lentil}

We remind the reader that a lentil is defined simply as
\begin{equation}
L^{x,y}_{r,s}
=
B(x,r)\cap B(y,s).
\end{equation}
We will always assume that $r,s<d(x,y)$.

We say that the thickness $\delta^{x,y}_{r,s}$ of this lentil is the length of the geodesic segment $\gamma_{x,y}\cap L^{x,y}_{r,s}$, where~$\gamma_{x,y}$ is the unit speed geodesic from~$x$ to~$y$.
Under the assumptions $r,s<d(x,y)$ this leads to
\begin{equation}
\label{eq:thickness}
\delta^{x,y}_{r,s}
=
r+s-d(x,y)
.
\end{equation}
The assumptions $r,s<d(x,y)$ imply that $\delta^{x,y}_{r,s}<r,s$.

The midpoint of the lentil is
\begin{equation}
m^{x,y}_{r,s}
=
\gamma_{x,y}(r-\tfrac12\delta).
\end{equation}
The transversal radius of the lentil~$L^{x,y}_{r,s}$ with midpoint $m=m^{x,y}_{r,s}$ is
\begin{equation}
R^{x,y}_{r,s}
=
\sup\{
R>0;\:
\exp_m(\eta)\in L
\text{ whenever }
\eta\in T_mM
\text{, }
\eta\perp\dot\gamma
\text{ and }
\abs{\eta}<R
\}.
\end{equation}
That is, (the closure of) the lentil contains a geodesic disc --- a ball of dimension $n-1$ --- of this radius normal to~$\gamma_{x,y}$.

\begin{lemma}
\label{lma:lentil-diam}
Consider a simple manifold of bounded geometry with the constants satisfying
the condition~\eqref{eq:diameter-curvature-bound}.
If a lentil $L^{x,y}_{r,s}$ has thickness $\delta>0$, then its diameter satisfies
\begin{equation}
\diam(L^{x,y}_{r,s})
\leq
\delta
+
\Cxv\sqrt{\delta},
\end{equation}
where the constant is given by~\eqref{eq:c15}.
\end{lemma}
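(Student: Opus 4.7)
Since $\diam(L^{x,y}_{r,s})\leq 2\sup_{z\in L^{x,y}_{r,s}} d(z,m)$ for $m=m^{x,y}_{r,s}$ by the triangle inequality, it suffices to bound $d(z,m)$ by $\tfrac12\delta+\tfrac12\Cxv\sqrt\delta$ for each $z$ in the lentil. My plan is to transfer this distance to the constant-curvature model $M_{\Cvi}$, where it can be estimated by an explicit trigonometric computation.

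Fix $z\in L^{x,y}_{r,s}$ and consider the geodesic triangle $(x,y,z)$ in $M$ with side lengths $a=d(x,z)\leq r$, $b=d(y,z)\leq s$, $c=d(x,y)=r+s-\delta$. The condition $\Civ\sqrt{\Cvi}<\pi$ guarantees that Lemma~\ref{lma:curvature-comparison1} applies and produces a comparison triangle $(\tilde x,\tilde y,\tilde z)$ in $M_{\Cvi}$ with the same side lengths, whose angle $\tilde\alpha$ at $\tilde x$ dominates the corresponding angle $\alpha$ at $x$. Let $\tilde m$ sit on $\overline{\tilde x\tilde y}$ at distance $r-\delta/2$ from $\tilde x$. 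To bound $d(m,z)$ itself, I would apply Lemma~\ref{lma:curvature-comparison2} to the subtriangle $(x,m,z)$ with known sides $r-\delta/2$, $a$ and included angle $\alpha$; this bounds $d(m,z)$ from above by the third side of the corresponding model triangle with the same two sides and angle. Monotonicity of this third side in the included angle, together with $\alpha\leq\tilde\alpha$, then promotes the estimate to $d(m,z)\leq d(\tilde m,\tilde z)$.

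The problem now reduces to estimating $d(\tilde m,\tilde z)$ inside the model lentil $B(\tilde x,r)\cap B(\tilde y,s)\subset M_{\Cvi}$ of thickness $\delta$. This model lentil is rotationally symmetric about the axis $\overline{\tilde x\tilde y}$; projecting $\tilde z$ onto the axis gives a projection that lies in the axial segment of length $\delta$ and so at most $\delta/2$ from $\tilde m$, while the perpendicular distance from $\tilde z$ to the axis is at most the transversal radius of the lentil. A leading-order expansion of the spherical (or Euclidean, when $\Cvi=0$) law of cosines shows that this transversal radius is $O(\sqrt\delta)$, with constant depending only on $\Civ$ and $\Cvi$. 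The main technical obstacle is the explicit bookkeeping in this last step: one must verify the bound uniformly across all admissible $(r,s,\delta)$ and extract the explicit value of $\Cxv$ as in~\eqref{eq:c15}, relying on the hypothesis $\Civ\sqrt{\Cvi}<\pi$ to stay strictly below the first conjugate distance in $M_{\Cvi}$, where the expansion would degenerate.
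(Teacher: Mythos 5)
The student takes a genuinely different route. The paper works with two arbitrary points $z_1,z_2\in L$, bounds the angle $\beta_i$ at $x$ via the model space, then projects each $z_i$ onto $\gamma_{x,y}$ \emph{along metric spheres} (taking $\hat z_i$ with $d(x,\hat z_i)=d(x,z_i)$), and bounds $d(z_i,\hat z_i)$ by a Jacobi-field growth estimate (Lemma~\ref{lma:JF-growth}) in the actual manifold. You instead fix a single $z\in L$, transfer $d(z,m)$ entirely to the model $M_{\Cvi}$ by combining Lemma~\ref{lma:curvature-comparison1}, Lemma~\ref{lma:curvature-comparison2} on the subtriangle $(x,m,z)$, and monotonicity of the spherical law of cosines --- a correct Alexandrov-type transfer --- and then propose to estimate $d(\tilde m,\tilde z)$ trigonometrically. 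If carried through, your constant would depend only on $\Civ$ and $\Cvi$, whereas the paper's $\Cxv$ of~\eqref{eq:c15} contains the factor $\Cxiii$ from the lower curvature bound $\Cv$; so the approaches do not produce the same constant, and you rightly flag the bookkeeping.

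There is, however, a genuine gap in the model-space estimate. The assertion that ``projecting $\tilde z$ onto the axis gives a projection that lies in the axial segment of length $\delta$'' is not justified and can fail in $M_{\Cvi}$. The spherical right-triangle identity
\begin{equation}
\cos\bigl(d(\tilde x,\tilde z)\sqrt{\Cvi}\bigr)
=
\cos\bigl(d(\tilde x,\tilde z')\sqrt{\Cvi}\bigr)\,
\cos\bigl(d(\tilde z,\tilde z')\sqrt{\Cvi}\bigr)
\end{equation}
yields $d(\tilde x,\tilde z')\leq d(\tilde x,\tilde z)<r$ only when $d(\tilde x,\tilde z)\sqrt{\Cvi}<\pi/2$, but the standing hypothesis~\eqref{eq:diameter-curvature-bound} allows $d(\tilde x,\tilde z)\sqrt{\Cvi}$ to lie anywhere in $(\pi/2,\pi)$. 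Past the equator of $\tilde x$ the foot of the perpendicular actually moves \emph{away} from $\tilde x$, so $\tilde z'$ need not lie in the axial segment and $d(\tilde m,\tilde z')$ is not bounded by $\delta/2$. The paper's metric-sphere projection avoids this entirely, since $d(x,\hat z)=d(x,z)\in(r-\delta,r)$ lies in the axial interval by the triangle inequality alone. You would either need to switch to that projection inside the model, or else quantify the overshoot of the orthogonal foot --- which can be done (it is $O(\delta)$ when $\delta$ is small), but it is an additional estimate your sketch does not supply.
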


The condition~\eqref{eq:diameter-curvature-bound} ensures that $\sin(\Civ\sqrt{\Cvi})>0$ and thus~$\Cxv$ is a well defined and positive constant.

\begin{proof}[Proof of Lemma~\ref{lma:lentil-diam}]
The proof is composed of two parts.
In the first part we show that if~$\delta$ is small, then the opening angle of the lentil as seen from~$x$ is small.
In the second part we use this to show that the lentil itself is small.

Let us denote $L\coloneqq L^{x,y}_{r,s}$.
Let $\gamma=\gamma_{x,y}$ be the geodesic between~$x$ and~$y$ and let $\ell\coloneqq d(x,y)$.
Take any point $z\in L$.

The three points $x,y,z$ form a triangle with sides $\ell$, $d(x,z)\eqqcolon a$, and $d(y,z)\eqqcolon b$.
Let us call the angle at~$x$ by the name $\beta\in[0,\pi]$.

We will estimate this angle by comparing to a constant curvature reference manifold.
As earlier, let~$M_{\Cvi}$ be the sphere of the same dimension as~$M$ with constant sectional curvature $\Cvi>0$.
In the corresponding geodesic triangle with the same side lengths $\ell,a,b$ the corresponding angle is~$\tilde\beta$.
By Lemma~\ref{lma:curvature-comparison1} we have $\beta\leq\tilde\beta$.

By the spherical law of cosines we have
\begin{equation}
\cos(\tilde\beta)
=
\frac{\cos(\tilde b)-\cos(\tilde\ell)\cos(\tilde a)}{\sin(\tilde\ell)\sin(\tilde a)},
\end{equation}
where we denoted
$\tilde\ell=\ell\sqrt{\Cvi}$, 
$\tilde a=a\sqrt{\Cvi}$, and
$\tilde b=b\sqrt{\Cvi}$.
We will also use scaled versions of the two radii and the thickness:
$\tilde r=r\sqrt{\Cvi}$,
$\tilde s=s\sqrt{\Cvi}$, and
$\tilde\delta=\delta\sqrt{\Cvi}$.

By the assumption that $z\in L$ we have $a<r$ and $b<s$, and the reverse triangle inequality gives $a>\ell-s$.
These same inequalities hold when scaled with~$\sqrt{\Cvi}$, whence
\begin{equation}
\cos(\tilde\beta)
\geq
\frac{\cos(\tilde s)-\cos(\tilde\ell)\cos(\tilde\ell-\tilde s)}{\sin(\tilde\ell)\sin(\tilde r)}.
\end{equation}
Using
\begin{equation}
\cos(\tilde s)
=
\cos(\tilde\ell)\cos(\tilde\ell-\tilde s)
+
\sin(\tilde\ell)\sin(\tilde\ell-\tilde s)
\end{equation}
and $\ell-s=r-\delta$,
the estimate simplifies to
\begin{equation}
\cos(\tilde\beta)
\geq
\frac{\sin(\tilde r-\tilde\delta)}{\sin(\tilde r)}.
\end{equation}
We will simplify this further using
$\sin(\tilde r-\tilde\delta)\geq\sin(\tilde r)-\tilde\delta$.

Finally, we make use of the fact that $\sin(t)\geq T^{-1}\sin(T)t$ for all $t\in[0,T]$ when $T\in(0,\pi)$.
All the distances on the manifold are bounded by the diameter bound~$\Civ$, so in our setting $T=\Civ\sqrt{\Cvi}$, which was indeed assumed to be below~$\pi$.
Combining these, we find
\begin{equation}
\cos(\tilde\beta)
\geq
1
-
\Cxxii\frac{\delta}{r}
\end{equation}
with the constant given by~\eqref{eq:c22}.
(In the limit $\Cvi\to0$ we have $\Cxxii=1$, and one can indeed verify the estimate with this constant using Euclidean comparison geometry when the sectional curvature is non-positive.)

Using $\cos(\beta)\leq1-\frac15\beta^2$, which is valid for all $\beta\in[0,\pi]$, we get
\begin{equation}
1-\tfrac15\beta^2
\geq
\cos(\beta)
\geq
1-\Cxxii\frac{\delta}{r},
\end{equation}
which leads to
\begin{equation}
\label{eq:beta^2<d/r}
\beta^2
\leq
5\Cxxii
\frac{\delta}{r}.
\end{equation}
This explicit estimate for~$\beta$ in terms of~$\delta$ and~$r$ concludes the first part of the proof.

Now consider any two points $z_1,z_2\in L$.
Estimating their distance will amount to estimating the diameter of the lentil.
Let~$\beta_i$ be the angle at~$x$ between~$\gamma_{x,y}$ and~$\gamma_{x,z_i}$.

Let~$\hat z_1$ be the unique point on~$\gamma$ with $d(x,z_1)=d(x,\hat z_1)$.
The point~$\hat z_1$ is the closest one to~$y$ on the metric sphere $S(x,d(x,z_1))$ because~$\gamma$ meets the sphere orthogonally.
Thus $d(y,\hat z_1)\leq d(y,z_1)<s$.
As also $d(x,\hat z_1)=d(x,z_1)<r$, we have $\hat z_1\in L$.
Let $\eta,\hat\eta\in T_xM$ be the vectors for which
$\exp_x(\eta)=z_1$
and
$\exp_x(\hat\eta)=\hat z_1$.
Let $\tilde\sigma\colon[0,\beta_1]\to T_xM$ be the unit speed circular arc with constant norm and endpoints $\tilde\sigma(0)=\hat\eta$ and $\tilde\sigma(\beta_1)=\hat\eta_1$.
Then $\sigma={\exp_x}\circ\tilde\sigma$ is a curve joining~$\hat z_1$ to~$z_1$.

If sectional curvature and length of geodesics is bounded uniformly, then
\begin{equation}
\abs{\dot\sigma(t)}
\leq
\Cxiii d(x,z_1)
<
\Cxiii r
\end{equation}
by lemma~\ref{lma:JF-growth}.
Estimating the distance between the two points by the length of a curve joining them, we thus have $d(z_1,\hat z_1)\leq \Cxiii r\beta_1$.

By~\eqref{eq:beta^2<d/r} we have $\beta_i\leq\sqrt{5\Cxxii\delta/r}$, and clearly $d(\hat z_1,\hat z_2)<\delta$ when~$\hat z_2$ is defined analogously to~$\hat z_1$ above.
The triangle inequality gives
\begin{equation}
d(z_1,z_2)
\leq
\Cxiii r\beta_1
+
\delta
+
\Cxiii r\beta_2
\leq
\delta
+
2\sqrt{5\Cxxii}\Cxiii\sqrt{r\delta}.
\end{equation}
Thus the diameter of the lentil can be estimated by
\begin{equation}
\diam(L)
\leq
\delta
+
2\sqrt{5\Cxxii}\Cxiii\sqrt{\diam(M)}\sqrt{\delta}
\end{equation}
as claimed.
\end{proof}

\begin{lemma}
\label{lma:trans-radius-and-ball}
Consider a simple manifold~$M$ of bounded geometry with the constants satisfying the condition~\eqref{eq:diameter-curvature-bound}.
Take any $x,y\in M$ and consider the lentil $L^{x,y}_{r,s}$ with $r,s\in(0,d(x,y))$.
If~$m$ is the midpoint of~$L^{x,y}_{r,s}$, then $B(m,\frac12\delta)\subset L^{x,y}_{r,s}$ and thus $R^{x,y}_{r,s}\geq\frac12\delta$.
\end{lemma}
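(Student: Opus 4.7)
The plan is to prove both inclusions by a direct triangle inequality argument, with no appeal to curvature comparison. The condition~\eqref{eq:diameter-curvature-bound} is not needed here; the statement is a purely metric consequence of the definitions.

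First I would compute the distances from the midpoint $m = m^{x,y}_{r,s}$ to the two focal points. Since $M$ is simple, the unit speed geodesic $\gamma_{x,y}$ realizes the distance between $x$ and $y$, so $d(x,y) = \ell$ where $\ell = r + s - \delta$ by~\eqref{eq:thickness}. Because $m = \gamma_{x,y}(r - \tfrac12\delta)$ lies on this minimizing geodesic, we have
\begin{equation}
d(x,m) = r - \tfrac{1}{2}\delta
\quad\text{and}\quad
d(y,m) = \ell - (r - \tfrac{1}{2}\delta) = s - \tfrac{1}{2}\delta.
\end{equation}
Both quantities are positive because $\delta < r$ and $\delta < s$, as noted right after~\eqref{eq:thickness}.

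Next, for any $z \in B(m, \tfrac12\delta)$ I would apply the triangle inequality twice:
\begin{equation}
d(x,z) \leq d(x,m) + d(m,z) < \bigl(r - \tfrac{1}{2}\delta\bigr) + \tfrac{1}{2}\delta = r,
\end{equation}
and similarly $d(y,z) < s$. Therefore $z \in B(x,r) \cap B(y,s) = L^{x,y}_{r,s}$, proving $B(m, \tfrac12\delta) \subset L^{x,y}_{r,s}$.

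Finally, for the transversal radius bound, I would observe that for any $\eta \in T_mM$ with $\eta \perp \dot\gamma_{x,y}$ and $|\eta| < \tfrac12\delta$, the point $\exp_m(\eta)$ satisfies $d(m, \exp_m \eta) \leq |\eta| < \tfrac12\delta$, so it lies in $B(m, \tfrac12\delta) \subset L^{x,y}_{r,s}$. By the definition of $R^{x,y}_{r,s}$ this gives $R^{x,y}_{r,s} \geq \tfrac12\delta$. There is no genuine obstacle in this proof; the only subtlety is making sure one uses that $m$ lies on a \emph{minimizing} geodesic (ensured by simplicity) so that the distances from $m$ to $x$ and $y$ are exactly the geodesic parameters, rather than mere upper bounds.
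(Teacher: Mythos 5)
Your proof is correct and follows the same route as the paper: compute $d(x,m)=r-\tfrac12\delta$ and $d(y,m)=s-\tfrac12\delta$ from the fact that $m$ lies on the minimizing geodesic, then apply the triangle inequality to place $B(m,\tfrac12\delta)$ inside both balls, and observe that the transversal-radius bound follows immediately. Your remark that the hypothesis~\eqref{eq:diameter-curvature-bound} is superfluous here is also accurate --- the paper's proof likewise uses no curvature comparison --- and the condition appears in the statement only because the lemma is applied in contexts where it is already assumed.
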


\begin{proof}
Recall that $L^{x,y}_{r,s}=B(x,r)\cap B(y,s)$.
As the distances from~$m$ to~$x$ and~$y$ are $r-\frac12\delta$ and $s-\frac12\delta$, respectively, it follows from the triangle inequality that $B(m,\frac12\delta)$ is contained in both balls defining the lentil.
The estimate for the transversal radius follows immediately.
\end{proof}

Lemma~\ref{lma:trans-radius-and-ball} is sufficient for our needs, but we point out that the exponent of~$\delta$ can be improved.
We provide this observation as Proposition~\ref{prop:transversal-radius} in Appendix~\ref{app:transversal}.

The endpoints~$x$ and~$y$ of the relevant geodesics will be source points in $\Gamma\subset B(\partial M,\eps_1)$.
As the source set is countable in practice and can well be finite, the geodesics will not cover the deep interior of the manifold.
But if the geodesics are fattened suitably, with the suitable level given by lemma~\ref{lma:dense-geodesics}, they do cover the desired set without gaps.
Such a thickening is most naturally realized in our setting as a union of lentils with~$x$, $y$, and $\delta>0$ fixed.
Lemma~\ref{lma:trans-radius-and-ball} and Proposition~\ref{prop:transversal-radius} gives a lower bound on the fatness of this ``sausage''.

\subsection{Boundary density estimates}

We will now describe how the quantities defined in equations~\eqref{eq:E1}, \eqref{eq:E2}, and~\eqref{eq:E3} estimate the distance of source points to the boundary.

Consider a source $s\in S$ located at the point $p=\pi(s)$.
The boundary distance function $r_p\colon\partial M\to\R$ is given by $r_p(x)=d(p,x)$ and the arrival time function~$a_s$ is this function shifted by a constant.
Proximity of a source point $p\in\sisus(M)$ to the boundary causes the Hessians of both these functions to blow up.
However, some of the curvature of the graph can be caused by the curvature of the boundary itself rather than the proximity of the source point.
Bounded geometry gives a concrete bound on this effect and allows us to ensure with concrete estimates that the source point really is close to the boundary.

Recall that $c(p)\subset\partial M$ is the set of critical points of the boundary distance function~$r_p$.

\begin{lemma}
\label{lma:d<E}
Let~$M$ be a simple Riemannian manifold with bounded geometry and $\emptyset\neq P\subset\sisus(M)$.
Let $\eps_1>0$.
The quantities defined in equations~\eqref{eq:E1}, \eqref{eq:E2}, and~\eqref{eq:E3} satisfy the following estimates:
\begin{enumerate}
\item
\label{item:E1}
Take any $p\in P$.
If $x\in c(p)$, then $d(p,x)\leq E(p,x)$.
\item
\label{item:E2}
For any $x\in\p M$ there is $p\in P$ with $d(p,x)\leq E(x)+\eps_1$.
\item
\label{item:E3}
For any $x\in\partial M$ there is $p\in P$ so that $d(p,x)\leq E+\eps_1$.
\end{enumerate}
\end{lemma}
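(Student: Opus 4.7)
The plan is to prove Part~\ref{item:E1} by comparing two expressions for $\Hess_{r_p}$ at a critical point: an upper bound from the interior Jacobi field estimate~\ref{bound:1} of Definition~\ref{def:bdd-geometry}, and the lower bound $\lambda(p,x)$ transferred to the interior via the Gauss formula~\eqref{eq:hess=hess+sff} plus the second fundamental form bound~\ref{bound:2}. Parts~\ref{item:E2} and~\ref{item:E3} then follow readily from Part~\ref{item:E1}, the triangle inequality, and the definitions~\eqref{eq:E2} and~\eqref{eq:E3}.

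For Part~\ref{item:E1}, fix $p\in P$ and $x\in c(p)$, and set $t=d(p,x)$. We may assume $\lambda(p,x)>\Cii$, since otherwise $E(p,x)=\infty$ and the claim is vacuous. Let $\gamma\colon[0,t]\to M$ be the unit speed geodesic from $p$ to $x$. The critical point condition $x\in c(p)$ says that $\der r_p$ vanishes on $T_x\partial M$; since $\der r_p=(\der\rho_p)|_{T\partial M}$ and $\nabla\rho_p(x)=\dot\gamma(t)$, this is equivalent to $\dot\gamma(t)$ being normal to $\partial M$, so every $w\in T_x\partial M$ is perpendicular to $\dot\gamma(t)$. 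Pick a unit eigenvector $w\in T_x\partial M$ of $\Hess_{r_p}(x)$ for its smallest eigenvalue $\lambda(p,x)$. Since a simple manifold has no conjugate points, there is a unique Jacobi field $J$ along $\gamma$ with $J(0)=0$ and $J(t)=w$, and this Jacobi field satisfies the standard identity $D_sJ(s)=\Hess_{\rho_p}(\gamma(s))J(s)$ for $s\in(0,t]$. Consequently $\tfrac12\partial_s\abs{J(s)}^2=\Hess_{\rho_p}(J,J)$, and the bound~\ref{bound:1} evaluated at $s=t$ yields $\Hess_{\rho_p}(w,w)\leq\Ci/t$. Combined with the Gauss formula~\eqref{eq:hess=hess+sff} and the estimate $h_2(w,w)\leq\Cii\abs{w}^2=\Cii$ coming from~\ref{bound:2}, this gives
\begin{equation*}
\lambda(p,x)=\Hess_{r_p}(w,w)=\Hess_{\rho_p}(w,w)+h_2(w,w)\leq\frac{\Ci}{t}+\Cii,
\end{equation*}
which rearranges to $t\leq\Ci/(\lambda(p,x)-\Cii)=E(p,x)$.

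For Part~\ref{item:E2}, given $x\in\partial M$, the definition of $E(x)$ as an infimum produces some $p\in P$ and $y\in c(p)$ with $E(p,y)+d_{\partial M}(x,y)<E(x)+\eps_1$. Applying Part~\ref{item:E1} to the pair $(p,y)$, the triangle inequality, and the trivial bound $d_M(y,x)\leq d_{\partial M}(y,x)$ from~\ref{bound:7}, we obtain
\begin{equation*}
d(p,x)\leq d(p,y)+d(y,x)\leq E(p,y)+d_{\partial M}(x,y)<E(x)+\eps_1.
\end{equation*}
Part~\ref{item:E3} is then immediate from $E(x)\leq E$.

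The only genuinely conceptual step is the Gauss-formula comparison in Part~\ref{item:E1}, and so the ``main obstacle'' is really just a sanity check: we must verify that the Jacobi field identity $D_sJ=\Hess_{\rho_p}J$ is valid up to $s=t$. This is standard on a simple manifold because $\rho_p$ is smooth throughout $M\setminus\{p\}$ (no cut locus), and because $w$ is perpendicular to $\dot\gamma(t)$ the vector $\Hess_{\rho_p}(w)$ is meaningful as the shape operator of the geodesic sphere through $x$ centred at $p$ applied to $w$.
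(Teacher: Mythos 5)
Your proof is correct and follows essentially the same route as the paper: the Jacobi-field identity $D_sJ=\Hess_{\rho_p}J$ along the radial geodesic, the bound from Definition~\ref{def:bdd-geometry}\,\ref{bound:1} giving $\Hess_{\rho_p}\leq\Ci t^{-1}$ on $\dot\gamma(t)^\perp$, the Gauss formula~\eqref{eq:hess=hess+sff} with $h_2\leq\Cii h_1$, and then parts~\ref{item:E2} and~\ref{item:E3} from the definition of $E(x)$, the triangle inequality and $d_M\leq d_{\partial M}$. The only cosmetic difference is that you evaluate the quadratic form on the specific minimizing eigenvector $w$, whereas the paper phrases the same bound as the operator inequality $\Hess_{\rho_p}\leq\Ci t^{-1}g$ on $\dot\gamma^\perp$ before taking the smallest eigenvalue.
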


\begin{proof}
Part~\ref{item:E1}:
Take any $p\in\sisus(M)$.
Let $\rho\colon M\to\R$ be the distance function $\rho(x)=d(p,x)$ and $r=\rho|_{\partial M}$.
The Hessians~$\Hess_\rho$ and~$\Hess_r$ of these functions are symmetric quadratic forms on~$TM$ and~$T\p M$, respectively.

Let~$\gamma$ be any unit speed geodesic with $\gamma(0)=p$ and~$J$ a Jacobi field along~$\gamma$ so that $J(0)=0$.
Suppose~$\gamma$ meets the boundary orthogonally, which is equivalent with the exit point being a critical point of~$r$.
Then the Hessian of the distance function has the property (cf. proof of Lemma~\ref{lma:tangent-order})
\begin{equation}
D_tJ(t)
=
\Hess_\rho J(t)
\end{equation}
for all $t>0$ and thus by part~\ref{bound:1} of Definition~\ref{def:bdd-geometry} we also have
\begin{equation}
\ip{J(t)}{\Hess_\rho J(t)}
\leq
\Ci t^{-1}\abs{J(t)}^2.
\end{equation}
As there are no conjugate points, this amounts to
\begin{equation}
\Hess_\rho
\leq
\Ci t^{-1}g
\end{equation}
in the sense of quadratic forms on $\dot\gamma^\perp\subset T_{\gamma(t)}M$.

If~$\lambda(x)$ denotes the smallest eigenvalue of~$\Hess_r(x)$ at a boundary point $x\in\partial M$, then~\eqref{eq:hess=hess+sff} yields
\begin{equation}
\label{eq:Hess-estimate-chain}
\lambda(x)
h_1(x)
\leq
\Hess_r(x)
=
\Hess_\rho(x)
+
h_2(x)
\leq
[\Ci r(x)^{-1}+\Cii]
h_1(x)
\end{equation}
as quadratic forms on the boundary.
If $\lambda(x)>\Cii$, this estimate gives
\begin{equation}
r(x)
\leq
\frac{\Ci}{\lambda(x)-\Cii}
=:
E(p,x).
\end{equation}
If $\lambda(x)\leq \Cii$, then $E(p,x)=\infty$.
The claimed estimate thus holds in both cases.

Part~\ref{item:E2}:
Let $x \in \p M$.
By the triangle inequality,
the estimate $d(y,x)\leq d_{\p M}(y,x)$ and Part~\ref{item:E1} we have
\begin{equation}
d(p,x)
\leq
d(p,y)+d_{\p M}(y,x)
\leq
E(p,y)+d_{\p M}(y,x)
\end{equation}
for all $p \in P$ and $y \in c(p)$.
Thus by the definition of $E(x)$ in~\eqref{eq:E2} there is a point $p\in P$ with a distance to~$x$ less than $E(x)+\eps_1$ to~$x$.

Part~\ref{item:E3}:
Follows immediately from the previous one.
\end{proof}

With the aid of Lemma~\ref{lma:d<E} we can now prove Proposition~\ref{prop:E&L}.

\begin{proof}[Proof of Proposition~\ref{prop:E&L}]
Proposition~\ref{prop:separation} implies that the data determines the arrival time functions~$a_s$ for all $s \in S$, although we do not have a description of the index set~$S$ yet.
As these functions differ from the boundary distance functions $r_s\colon\p M\to\R$, $r_s(x)=d(x,\pi(s))$, only by a constant, we thus know the differential and the Hessian of each~$r_s$ on all of~$\p M$.
Therefore the data determines the critical points of these functions and the function $E(p,y)$, for $p=\pi(s)$ and $y \in c(p)$, of~\eqref{eq:E1}.
From this one can easily compute~$E(x)$ and~$E$ from~\eqref{eq:E2} and~\eqref{eq:E3}.

The set~$\Gamma$ is defined in terms of these quantities, so it is uniquely determined by the data as well.

Theorem~\ref{thm:A} indicates that the data determines the pointwise spatial distances of the source points, and the last part of the claim follows.
\end{proof}

\subsection{Interior density estimates}

For density of sources deep in the manifold~$M$, we use two estimates.
The first one concerns the density of the set of geodesics connecting near-boundary source points in~$\Gamma$.
The second one ensures that the lentils cover enough of the manifold.

\begin{lemma}
\label{lma:dense-geodesics}
Let~$M$ be a simple manifold of bounded geometry and let $\eps_2>0$.
Let $\Gamma\subset M$ be such that $\partial M\subset B(\Gamma,\eps_2)$.
Then for every $z\in\sisus(M)$ there are points $x,y\in\Gamma$ so that
\begin{equation}
\dist(z,\gamma_{x,y}([0,d(x,y)]))
\leq
\Cxiv\eps_2.
\end{equation}
\end{lemma}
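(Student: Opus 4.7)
The plan is to realise $z$ on a geodesic between two boundary points, approximate those boundary points in $\Gamma$, and then show that the approximating geodesic passes close to $z$ by interpolating through a single intermediate geodesic. First, by simplicity the maximal unit-speed geodesic through $z\in\sisus(M)$ has two distinct endpoints $x',y'\in\partial M$, and $z=\gamma_{x',y'}(t^*)$ for some $t^*\in(0,L)$ with $L=d(x',y')$. Second, the density hypothesis $\partial M\subset B(\Gamma,\eps_2)$ supplies $x,y\in\Gamma$ with $d(x,x')<\eps_2$ and $d(y,y')<\eps_2$.

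The main step is a two-stage comparison via the intermediate geodesic $\gamma_{x,y'}$. Reparametrise $\gamma_{x',y'}$ and $\gamma_{x,y'}$ on $[0,1]$ at constant speed, both starting from the shared endpoint $y'$, as curves $\sigma_1,\sigma_2$. Writing $z=\sigma_1(t_1)$ with $t_1=d(z,y')/L$, Lemma~\ref{lma:geodesic-diversion} gives
\begin{equation*}
d(z,\sigma_2(t_1))\leq \Cxiv\, t_1\, d(x',x)\leq \Cxiv\, t_1\,\eps_2.
\end{equation*}
Set $p_1=\sigma_2(t_1)$ and now reparametrise $\gamma_{x,y'}$ and $\gamma_{x,y}$ on $[0,1]$ at constant speed, both starting from the shared endpoint $x$, as curves $\tau_1,\tau_2$. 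A short computation using the constant speeds identifies $p_1=\tau_1(1-t_1)$, so a second application of Lemma~\ref{lma:geodesic-diversion} yields
\begin{equation*}
d(p_1,\tau_2(1-t_1))\leq \Cxiv(1-t_1)\,d(y',y)\leq \Cxiv(1-t_1)\,\eps_2.
\end{equation*}
The triangle inequality now delivers $d(z,\tau_2(1-t_1))\leq \Cxiv t_1\eps_2+\Cxiv(1-t_1)\eps_2=\Cxiv\eps_2$, and since $\tau_2(1-t_1)$ lies on the image $\gamma_{x,y}([0,d(x,y)])$, the claim follows.

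The key design choice making the constant come out to exactly $\Cxiv$ rather than $2\Cxiv$ is routing the interpolation through $\gamma_{x,y'}$, so that the two parameter factors $t_1$ and $1-t_1$ arising from Lemma~\ref{lma:geodesic-diversion} are complementary and sum to $1$. The main technical nuisance is the potential degeneracy $x=y$, which forces $L\leq 2\eps_2$ and makes the intermediate geodesic $\gamma_{x,y'}$ collapse; in that regime, however, $z$ itself lies at distance at most $L+\eps_2$ from a point of $\Gamma$ by a direct triangle inequality, so the bound is already controlled and only a cosmetic separate argument is required to close the proof.
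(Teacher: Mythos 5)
Your proof is correct but takes a genuinely different route from the paper. The paper fixes a single point $x\in\Gamma$ first, shoots the geodesic from $x$ through $z$ out to the boundary, reaching some $\hat y\in\partial M$, approximates only $\hat y$ by a point $y\in\Gamma$, and applies Lemma~\ref{lma:geodesic-diversion} exactly once, between $\gamma_{x,\hat y}$ and $\gamma_{x,y}$. Since the first endpoint is already in $\Gamma$, there is only one approximation error and the constant $\Cxiv$ falls out immediately, with no degenerate cases. Your version instead realises $z$ on the maximal geodesic with boundary endpoints $x',y'$, approximates both endpoints, and chains two applications of the diversion lemma through the intermediate geodesic $\gamma_{x,y'}$, using the observation that the two parameter factors $t_1$ and $1-t_1$ are complementary. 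That observation is the clever part of your argument and it does work; the paper's one-sided construction simply sidesteps the need for it.

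Your handling of the degeneracy $x=y$, however, is off. The fallback you suggest, namely $d(z,\Gamma)\le L+\eps_2\le 3\eps_2$, does not close the gap: $\Cxiv=\Cvii\Cxiii$ can be arbitrarily close to $1$ (on a nearly Euclidean manifold both factors tend to $1$), so $3\eps_2$ may exceed the target bound $\Cxiv\eps_2$. Even sharpening to $d(z,x)<2\eps_2$ via $\min(d(z,x'),d(z,y'))\le L/2$ leaves the same problem. Fortunately no separate argument is needed at all: Lemma~\ref{lma:geodesic-diversion} is valid when one of the geodesics has zero constant speed (its proof allows $\eta_i=0$, and the statement then reduces to $t|\eta_1|\le\Cxiv t|\eta_1|$), so your two-stage comparison runs through unchanged with $\gamma_{x,y}$ a constant curve and still yields $d(z,x)<\Cxiv\eps_2$. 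You should delete the proposed workaround and instead simply note that the diversion lemma covers the degenerate geodesic.
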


\begin{proof}
Fix any $x\in\Gamma$ and take any $z\in\sisus(M)$.
Let $\hat\gamma=\gamma_{x,z}$ be a constant speed geodesic with $\hat\gamma(0)=x$ and $\hat\gamma(t)=z$ for some $t>0$.
We extend this geodesic beyond~$z$ so that it meets~$\partial M$ at some time $t'>t$.
We scale the constant speed so that $t'=1$ and we denote  $\hat y\coloneqq\hat\gamma(1)\in\p M$.

There is $y\in\Gamma$ so that $d(y,\hat y)<\eps_2$.
Let $\gamma\colon[0,1]\to M$ be the constant speed geodesic for which $\gamma(0)=x$ and $\gamma(1)=y$.

By Lemma~\ref{lma:geodesic-diversion} we have
\begin{equation}
d(\hat\gamma(t),\gamma(t))
\leq
\Cxiv td(\hat y,y).
\end{equation}
As $t<1$, we have thus
\begin{equation}
\dist(z,\gamma([0,1]))
\leq
d(\hat\gamma(t),\gamma(t))
<
\Cxiv\eps_2
\end{equation}
as claimed.
\end{proof}

\begin{lemma}
\label{lma:lentils-cover}
Take any $\eps_1>0$ and $\eps_2>0$,
and 
let~$M$ be a simple manifold with bounded geometry and $\Gamma\subset B(\partial M,\eps_1)$
so that $\partial M\subset B(\Gamma,\eps_2)$.
Let $z \in M$. Whenever $d(z,\partial M)\geq\eps_1+\Cxvii\eps_2$,
then there is a lentil~$L^{x,y}_{r,s}$ with $x,y \in \Gamma$ and thickness $\delta=\Cix\eps_2<\min(r,s)$ containing~$z$.
The constants are given by~\eqref{eq:c9} and~\eqref{eq:c17}.
\end{lemma}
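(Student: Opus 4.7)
The plan is to use Lemma~\ref{lma:dense-geodesics} to locate source points $x,y\in\Gamma$ whose connecting geodesic passes close to~$z$, and then build a lentil of thickness $\delta=\Cix\eps_2$ around the nearest point on that geodesic, so that~$z$ is swallowed by the lentil via Lemma~\ref{lma:trans-radius-and-ball}. The geometric core is negligible; essentially all the work is bookkeeping of constants.

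First I would apply Lemma~\ref{lma:dense-geodesics} to~$z$ (noting $\p M\subset B(\Gamma,\eps_2)$ is assumed) to obtain $x,y\in\Gamma$ and some $t_0\in[0,d(x,y)]$ such that $m^\ast:=\gamma_{x,y}(t_0)$ satisfies $d(z,m^\ast)\leq\Cxiv\eps_2$. Then I would set
\begin{equation}
r=t_0+\tfrac{1}{2}\delta,\qquad s=d(x,y)-t_0+\tfrac{1}{2}\delta,
\end{equation}
so that $L:=L^{x,y}_{r,s}$ has thickness $r+s-d(x,y)=\delta$ and midpoint $\gamma_{x,y}(r-\tfrac12\delta)=m^\ast$. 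By Lemma~\ref{lma:trans-radius-and-ball} we then have $B(m^\ast,\tfrac12\delta)\subset L$, so if $\Cix$ is chosen (as in~\eqref{eq:c9}) strictly larger than $2\Cxiv$, the estimate $d(z,m^\ast)\leq\Cxiv\eps_2<\tfrac12\delta$ yields $z\in L$.

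It remains to check the thickness condition $\delta<\min(r,s)$, which reduces to $d(m^\ast,x)=t_0>\tfrac12\delta$ and $d(m^\ast,y)=d(x,y)-t_0>\tfrac12\delta$. For the first, the hypotheses $x\in\Gamma\subset B(\p M,\eps_1)$ and $d(z,\p M)\geq\eps_1+\Cxvii\eps_2$ combined with the triangle inequality give
\begin{equation}
d(m^\ast,x)\geq d(z,x)-d(z,m^\ast)\geq \bigl(d(z,\p M)-d(x,\p M)\bigr)-\Cxiv\eps_2\geq(\Cxvii-\Cxiv)\eps_2,
\end{equation}
and symmetrically for~$y$. Choosing $\Cxvii$ of~\eqref{eq:c17} so that $\Cxvii-\Cxiv>\tfrac12\Cix$ makes both of these quantities exceed $\tfrac12\delta$, which closes the argument.

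The main (and only) obstacle is to verify that the constants $\Cix$ from~\eqref{eq:c9} and $\Cxvii$ from~\eqref{eq:c17} are compatible with the two inequalities above, namely $\Cix>2\Cxiv$ and $\Cxvii>\tfrac12\Cix+\Cxiv$. Both are purely arithmetic and can be arranged by defining these constants suitably in the appendix; no further geometric input is needed beyond Lemmas~\ref{lma:dense-geodesics} and~\ref{lma:trans-radius-and-ball}.
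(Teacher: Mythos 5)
Your proof follows the same route as the paper's: apply Lemma~\ref{lma:dense-geodesics} to place a point $m^\ast$ of $\gamma_{x,y}$ within $\Cxiv\eps_2$ of $z$, center a lentil of thickness $\delta=\Cix\eps_2$ at $m^\ast$, and use Lemma~\ref{lma:trans-radius-and-ball} plus a reverse triangle inequality to check both that $z$ lands in the lentil and that $m^\ast$ is far enough from $x,y$ for the thickness condition. The only difference is cosmetic: you plug in the depth hypothesis directly where the paper first introduces an auxiliary parameter $h$ and solves for it afterward.

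One small slip worth flagging: you write that $\Cix$ of~\eqref{eq:c9} is ``strictly larger than $2\Cxiv$,'' but in fact $\Cix=2\Cvii\Cxiii=2\Cxiv$ exactly, so the chain $d(z,m^\ast)\leq\Cxiv\eps_2<\tfrac12\delta$ has equality rather than strict inequality in the last step. The argument still closes because the bound in Lemma~\ref{lma:dense-geodesics} is in fact strict (its proof produces $<\Cxiv\eps_2$ even though the statement records $\leq$), so $d(z,m^\ast)<\tfrac12\delta\leq R^{x,y}_{r,s}$ holds after all; the paper sidesteps this by phrasing its condition as $\Cxiv\eps_2\leq\tfrac12\delta$. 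Your inequality $\Cxvii-\Cxiv>\tfrac12\Cix$, on the other hand, does hold strictly with $\Cxvii=3\Cxiv$, and your bound $d(z,\p M)-d(x,\p M)>\Cxvii\eps_2$ is genuinely strict since $\Gamma\subset B(\p M,\eps_1)$ is an open ball, so that part is fine.
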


\begin{proof}
We showed in Lemma~\ref{lma:dense-geodesics} above that for any $z\in\sisus(M)$ there are $x,y\in\Gamma$ so that the distance from~$z$ to the trace of the geodesic~$\gamma_{x,y}$ is at most $\Cxiv\eps_2$.
We will choose $h>0$ later (see~\eqref{eq:h-choice} for an explicit expression) and require that $d(x,z)\geq h$ and $d(y,z)\geq h$.
These additional requirements
may not hold for the pair of points $x,y\in\Gamma$ without further assumptions for~$z$.
As both~$x$ and~$y$ are $\eps_1$-close to the boundary, the additional requirements $d(x,z)\geq h$ and $d(y,z)\geq h$ are certainly satisfied if $d(z,\partial M)\geq \eps_1+h$.
This is why we placed a boundary distance assumption on~$z$ in the statement of this lemma.

Let $z'\in\gamma_{x,y}$ be the nearest point on this geodesic to the point~$z$. After choosing~$h$ large enough we obtain by the reverse triangle inequality
\begin{equation}
\label{eq:rev-triangle-lentil-cover}
d(x,z'), d(y,z')
\geq
h
-
\Cxiv\eps_2
>
0
.
\end{equation}
This implies that~$z'$ is an interior point of~$\gamma_{x,y}$, and the geodesics~$\gamma_{x,y}$ and~$\gamma_{z,z'}$ meet orthogonally at~$z'$.
To make sure that~$z$ is contained in a lentil~$L^{x,y}_{r,s}$, with thickness~$\delta$ and midpoint~$z'$, we require that the transversal radius of that lentil is more than $d(z,z')$.

By Lemma~\ref{lma:trans-radius-and-ball} the aforementioned condition of the transversal radius is satisfied when
\begin{equation}
\label{eq:cond-1}
\Cxiv\eps_2
\leq
\tfrac12\delta
.
\end{equation}
The condition
for~$z'$ to be a midpoint of a lentil of thickness~$\delta$ is that $d(x,z')>\frac12\delta$ and $d(y,z')>\frac12\delta$, which by equation~\eqref{eq:rev-triangle-lentil-cover} are satisfied when
\begin{equation}
\label{eq:cond-2}
h-\Cxiv\eps_2
>
\tfrac12\delta
.
\end{equation}
Combining the two conditions~\eqref{eq:cond-1} and~\eqref{eq:cond-2} gives
\begin{equation}
\Cxiv\eps_2
<
h-\Cxiv\eps_2
\end{equation}
and therefore
\begin{equation}
h
>
2\Cxiv
\eps_2.
\end{equation}
We choose
\begin{equation}
\label{eq:h-choice}
h
=
3\Cxiv
\eps_2.
\end{equation}
The conditions~\eqref{eq:cond-1} and~\eqref{eq:cond-2} then become
\begin{equation}
2\Cxiv\eps_2
\leq
\delta
<
4\Cxiv\eps_2,
\end{equation}
so we may choose
\begin{equation}
\label{eq:c14c16_inv}
\delta
=
2\Cxiv
\eps_2.
\end{equation}
This is why we chose~$\Cix$ in~\eqref{eq:c9} as we did.

Finally we note that by the choice of $z'\in \gamma_{x,y}$ and due to~\eqref{eq:rev-triangle-lentil-cover}, \eqref{eq:h-choice}, and~\eqref{eq:c14c16_inv} we have
$
d(x,y)>
\delta.
$
If we set $r=d(x,z')+\tfrac12\delta$ and $s=\delta+d(x,y)-r=d(z',y)+\tfrac12\delta$,
then the lentil~$L_{r,s}^{x,y}$ is of width~$\delta$ and has~$z'$ as a midpoint. Moreover, by the previous argument this lentil contains the point~$z$, if $d(z,\partial M)\geq\eps_1+\Cxvii\eps_2$, as required in the claim of this lemma. 
\end{proof}

The choices of~$h$ and~$\delta$ in the proof above are optimal up to a constant and the explicit choice simplifies our estimates.

\subsection{Global density estimates}
\label{sec:thm-C-pf}

With all the estimates we have collected, we are ready to prove both parts of Theorem~\ref{thm:C23}.
Claim~\ref{claim:2} states that if all lentils of thickness~$\delta$ meet a source point (which is verifiable from data), then the source set $P_1\subset M_1$ has a concrete density estimate.
The thickness~$\delta$ is chosen carefully depending on~$\eps_2$.
Claim~\ref{claim:3} states that under the assumptions of the previous claim the measurements define a discrete metric space which is a quantitatively good approximation for the true space.

\begin{proof}[Proof of Theorem~\ref{thm:C23}] 
For claim~\ref{claim:2}, take any $z\in M$.
We wish to show that there is $p\in P$ so that $d(z,p)<\eps$ with~$\eps$ chosen as in~\eqref{eq:eps<eps2+reps2}.
We split the proof in two cases: near the boundary and deep within the manifold, and use different tools in either of these cases to verify the validity of the density claim. 

The boundary case:
If $\dist(z,\p M)<\eps_1+\Cxvii\eps_2$,
then there is $x\in\p M$ so that $d(z,x)<\eps_1+\Cxvii\eps_2$.
By claim~\ref{item:E3} of Lemma~\ref{lma:d<E}, there is $p\in P$ so that $d(x,p)<E+\eps_1$.
Recall that $\eps_2=E+\eps_1>\eps_1$.
By the triangle inequality we have
\begin{equation}
\label{eq:z-bdy-d-P}
\dist(z,P)
\leq
d(z,p)
\leq
d(z,x)
+
d(x,p)
<
(\Cxvii+2)\eps_2.
\end{equation}
This is a sufficient estimate near the boundary.

The deep interior case:
By definition of the set~$\Gamma$ in~\eqref{eq:Gamma} and Lemma~\ref{lma:d<E} we have $\Gamma\subset B(\p M, \eps_1)$ and $\partial M\subset B(\Gamma,\eps_2)$.
If $d(z,\p M)\geq\eps_1+\Cxvii\eps_2$, 
then Lemma~\ref{lma:lentils-cover} shows that there is a lentil $L=L^{x,y}_{r,s}$ of the correct thickness $\delta=\Cix\eps_2<r,s$ so that $z\in L$. 
By the crucial assumption of the theorem, there is $p\in P\cap L$.
Lemma~\ref{lma:lentil-diam} then gives us
\begin{equation}
\label{eq:z-int-d-P}
d(z,P)
\leq
d(z,p)
\leq
\delta
+
\Cxv\sqrt{\delta}
=
\Cix\eps_2
+
\Cxv\sqrt{\Cix\eps_2}.
\end{equation}
This is a sufficient estimate for deep interior points.

Now it remains to combine the estimates for points in the deep interior and near the boundary.
Combining~\eqref{eq:z-bdy-d-P} and~\eqref{eq:z-int-d-P} gives
\begin{equation}
\label{eq:pf-density-estimate}
\begin{split}
d(z,P)
&\leq
\max\left(
(\Cxvii+2)\eps_2
,
\Cix\eps_2
+
\Cxv\sqrt{\Cix\eps_2}
\right)
\\&\leq
(\Cxvii+2+\Cix)\eps_2  
+
\Cxv\sqrt{\Cix\eps_2}
\end{split}
\end{equation}
for all $z\in M$. 
This proves that $P\subset M$ is $\eps$-dense with the choice of~\eqref{eq:eps<eps2+reps2} and concludes the proof of claim~\ref{claim:2}.

The labeled Gromov--Hausdorff distance is now straightforward to estimate for claim~\ref{claim:3}.
By the definition of~$E(x)$ in~\eqref{eq:E2} we can choose a function $\alpha \colon \p M \to P$ that satisfies~\eqref{eq:alpha-choice}.
This function can be actually identified from the data~$Q(S)$ as we know the first fundamental form of~$\p M$ and the value of~$E(p,y)$ for all $p \in \pi(S)$ and $y \in c(p)$, given by~\eqref{eq:E1}, is obtained by the computing the gradient and the Hessian of the corresponding arrival time function.
Then by Lemma~\ref{lma:d<E} it follows that for each $x \in \p M$ there exists $y\in c(\alpha(x))$ that satisfies
\begin{equation}
\eps_2\geq E(x)+\eps_1
>
E(\alpha(x),y)+d_{\p M}(x,y)
\geq
d(\alpha(x),y)+d(x,y)
\geq
d(\alpha(x),x)
.
\end{equation}
Since $P\subset M$ is $\eps$-dense, Proposition~\ref{prop:lgh-density} finally gives
\begin{equation}
\ghd{\p M}(P,\alpha;M,\iota)
\leq
\eps
+
\eps_2.
\end{equation}
By the choice of~$\eps$ in~\eqref{eq:eps<eps2+reps2} this proves estimate~\eqref{eq:ghd<eps2+reps2} and completes the proof.
\end{proof}

\subsection{Reverse density estimates}

We do not need any more preparations before the next proof.

\begin{proof}[Proof of Theorem~\ref{thm:reverse}]
Consider a unit speed geodesic~$\gamma$ starting at a source point $p=\gamma(0)\in P$.
Take any $t>0$ and any $w\in T_{\gamma(t)}M$ orthogonal to~$\dot\gamma(t)$.
By bounded geometry we have
\begin{equation}
\ip{w}{\Hess_\rho w}
\geq
\abs{w}^2
(\Cxx t^{-1}-\Cxxi).
\end{equation}
Now suppose that $y=\gamma(t)$ is on~$\p M$ and~$\dot\gamma(t)$ is normal to the boundary.

The second fundamental form~$h_2$ is positive definite by simplicity and by~\eqref{eq:hess=hess+sff} we deduce
\begin{equation}
\Hess_r(y)
=
\Hess_\rho(y)+h_2(y)
\geq
(\Cxx t^{-1}-\Cxxi)h_1(y),
\end{equation}
where~$h_1$ is the first fundamental form.
Therefore the smallest eigenvalue~$\lambda(p,y)$ of~$\Hess_r(y)$ satisfies
\begin{equation}
\label{eq:lambda>1/d}
\lambda(p,y)
\geq
\Cxx d(p,y)^{-1}-\Cxxi.
\end{equation}
This enables us to estimate~$E(p,y)$.

To utilize the assumption that $P\subset M$ is $\hat\eps$-dense with~$\hat\eps$ chosen by~\eqref{eq:hateps}, suppose that $d(\partial M,p)<\hat\eps$ and let $z_p\in\p M$ be the nearest boundary point to it.
Estimate~\eqref{eq:lambda>1/d} then implies that
\begin{equation}
\label{eq:E<eps-obsolete}
E(p,z_p)
\leq
\frac{\Ci}{\Cxx\hat\eps^{-1}-\Cxxi-\Cii}
\end{equation}
when $\hat\eps<\Cxx/(\Cxxi+\Cii)$.
Due to our choice of~$\hat\eps$, we have
\begin{equation}
\label{eq:rev-cond-1}
\hat\eps
\leq
\frac{\Cxx}{2(\Cxxi+\Cii)}
,
\end{equation}
and so
\begin{equation}
\label{eq:E<eps}
E(p,z_p)
\leq
2\Ci\Cxx^{-1}\hat\eps.
\end{equation}

Consider then an arbitrary point $x\in\p M$.
There is a source point $q\in P$ with $d(x,q)<\hat\eps$. 
Let $z_q\in\partial M$ be the closest boundary point to~$q$.
We have
$
d(z_q,q)
<
\hat \eps,
$
and by bounded geometry (part~\ref{bound:7} of Definition~\ref{def:bdd-geometry})
\begin{equation}
d_{\partial M}(x,z_q)
\leq
\Cviii d_{M}(x,z_q)
\leq
\Cviii(
d_M(x,q)
+
d_M(z_q,q)
)
<
2\Cviii\hat\eps.
\end{equation}
By~\eqref{eq:E<eps} we get
\begin{equation}
E(x)
\leq
E(q,z_q)
+
d_{\partial M}(x,z_q)
\leq
2\Ci\Cxx^{-1}\hat\eps
+
2\Cviii\hat\eps.
\end{equation}
As this bound it is independent of $x\in\p M$, we have the same bound for~$E$ from~\eqref{eq:E3}:
\begin{equation}
\label{eq:E-bound}
E
\leq
2(\Ci\Cxx^{-1}+\Cviii)\hat\eps,
\end{equation}
under the assumption~\eqref{eq:rev-cond-1}.

Combining our choice 
$
\eps_1
=
\Cxix \hat \eps
$
with~\eqref{eq:E-bound}, we get
\begin{equation}
\label{eq:eps2<hateps}
\eps_2
=
\eps_1+E
\leq
(\Cxix+2\Ci\Cxx^{-1}+2\Cviii)
\hat\eps.
\end{equation}
All other estimates for~$\eps_2$ follow from this one.

%
%
%
To satisfy~\eqref{eq:eps2<eps}, we require that
$
\Cxii\eps_2
<
\frac12\eps
$
and
$
\Cxi\sqrt{\eps_2}
<
\frac12\eps
$.
By~\eqref{eq:eps2<hateps} both follow from requiring
\begin{equation}
\Cxii(\Cxix+2\Ci\Cxx^{-1}+2\Cviii)\hat\eps
<
\tfrac12\eps
\end{equation}
and
\begin{equation}
\label{eq:rev-cond-5}
\Cxi\sqrt{(\Cxix+2\Ci\Cxx^{-1}+2\Cviii)\hat\eps}
<
\tfrac12\eps,
\end{equation}
or equivalently
\begin{equation}
\label{eq:rev-cond-4}
\hat\eps
<
\min\left(
\frac{\eps}{2\Cxii(\Cxix+2\Ci\Cxx^{-1}+2\Cviii)}
,
\frac{\eps^2}{4\Cxi^2(\Cxix+2\Ci\Cxx^{-1}+2\Cviii)}
\right).
\end{equation}
Thus~\eqref{eq:rev-cond-1} and~\eqref{eq:rev-cond-4} are satisfies due to~\eqref{eq:hateps}, and by the previous remark the estimate~\eqref{eq:eps2<eps} follows.

It remains to show that each lentil $L=L^{x,y}_{r,d(x,y)-r+\delta}$ with $x,y\in P$ contain source points as claimed.
The thickness of the lentil~$L$ is $\delta=\Cix\eps_2$, so by Lemma~\ref{lma:trans-radius-and-ball} $B(m,\tfrac12\delta)\subset L$, where~$m$ is the midpoint of the lentil~$L$.
Therefore each lentil of thickness~$\delta$ contains a ball of radius $\tfrac12\delta$. 
It follows from $\hat\eps$-density of $P\subset M$ that that each such lentil contains a source point because
\begin{equation}
\label{eq:rev-cond-3}
\hat\eps
=
\Cxix^{-1} \eps_1
=
\tfrac12\Cix\eps_1
\leq
\tfrac12\Cix\eps_2
=
\tfrac12\delta
.
\end{equation}
This is why we chose~$\Cxix$ so that $\Cxix^{-1}=\tfrac12\Cix$.
\end{proof}

\section{Convergence of discrete approximations}
\label{sec:convergence}

\subsection{Deterministic convergence}

With Proposition~\ref{prop:E&L}, Theorem~\ref{thm:C23}, and Theorem~\ref{thm:reverse} the proof of Theorem~\ref{thm:D} is straightforward.

\begin{proof}[Proof of Theorem~\ref{thm:D}]
We begin with Proposition~\ref{prop:separation} with $\Omega=\p M\times(0,T)$.
Out of the parts of graphs we only choose the ones that are full graphs of a function $a_s\colon\p M\to\R$.
It follows from the diameter bound that
\begin{equation}
\tau(s)
\leq
a_s(x)
\leq
\tau(s)+\Civ 
\end{equation}
for all $x\in\p M$. Therefore the set $Q(S,T)$ of~\eqref{eq:Q(S,T)} contains the full graph of~$a_s$ for all the sources $s\in S$ with $0<\tau(s)<T-\Civ$.
Let us denote the set of source points with their complete graphs contained in $\p M\times(0,T)$ by $P_T\subset\pi(S)$.

If~$T$ is too small, we may have $P_T=\emptyset$.
In this case we choose the metric approximation~$M_T$ to be a set of one point and $\alpha_T\colon\p M\to M_T$ the constant map.
When there are sources, we set $M_T=P_T$.

Take any $\eps>0$ and let~$\hat\eps$ be given by~\eqref{eq:hateps}.
As $\bigcup_{T>0}P_T$ is dense in~$M$ by assumption, by compactness there is $T(\eps)>0$ so that~$P_T$ is $\hat\eps$-dense for all $T\geq T(\eps)$.
With the choices $\eps_1=\Cxix\hat\eps$ and $\eps_2=\eps_1+E(T)$, where~$E(T)$ is defined as in~\eqref{eq:E3}, and $\delta=\Cix\eps_2$  we have
\begin{equation}
\label{eq:eps>eps2+sqrt_eps2}
\Cxii\eps_2
+
\Cxi\sqrt{\eps_2}
<
\eps,
\end{equation}
by Theorem~\ref{thm:reverse}.
By the last claim of Theorem~\ref{thm:reverse} the assumption of Theorem~\ref{thm:C23} is satisfied.
Thus the set~$P_T$ is also 
$\eps$-dense
as required in claim (\ref{claim:3}) of Theorem~\ref{thm:C23}. 
We choose a map $\alpha_T\colon\p M\to P_T$ so that~\eqref{eq:alpha-choice} is satisfied when $\alpha=\alpha_T$ and $P=P_T$.
Finally the estimate~\eqref{eq:ghd<eps2+reps2} with~\eqref{eq:eps>eps2+sqrt_eps2} yields
\begin{equation}
\label{eq:ghd<bar-eps1}
\ghd{\partial M}(P_T,\alpha_T;M,\iota)
<
\eps.
\end{equation}
This concludes the the proof.
\end{proof}

\subsection{Dense sources}

Now we are finally able to prove Theorem~\ref{thm:B}.
We will make use of a version of the Myers--Steenrod Theorem~\cite{myers-steenrod,palais-myers-steenrod} which states that a metric isometry between smooth Riemannian manifolds is necessarily smooth.
In the case of simple manifolds it is straightforward to prove and the boundary causes no technical trouble. We record the proof here for the sake of completeness.

\begin{lemma}[Myers--Steenrod Theorem]
\label{lma:myers-steenrod}
If $\Phi\colon M_1\to M_2$ is a metric isometry between simple Riemannian manifolds, it is smooth up to the boundary and an isometry also in the sense that $\Phi^*g_2=g_1$.
\end{lemma}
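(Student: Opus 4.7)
The plan is to recover the smooth structure of~$\Phi$ through the geodesic structure of the two simple manifolds, essentially by showing that~$\Phi$ is conjugated to its pointwise differential by the exponential maps. Throughout I exploit that on a simple manifold every pair of points is joined by a unique minimizing geodesic depending smoothly on the endpoints, and that at every point $p\in M_i$ the exponential map $\exp_p$ is a diffeomorphism from its maximal domain $U_p\subset T_pM_i$ onto~$M_i$ (Lemma~\ref{lma:global-logarithm}).

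First I would verify that~$\Phi$ preserves geodesics with their parametrization. A metric isometry sends distance-minimizing curves to distance-minimizing curves, and on a simple manifold a distance-minimizing curve between two points is the unique geodesic between them. Hence for any unit-speed geodesic $\gamma\colon[0,\ell]\to M_1$, the curve $\Phi\circ\gamma$ is the unique unit-speed geodesic from $\Phi(\gamma(0))$ to $\Phi(\gamma(\ell))$ in~$M_2$. This lets me define a map $T\Phi_p\colon T_pM_1\to T_{\Phi(p)}M_2$ by declaring $T\Phi_p(v)$ to be the initial velocity of $\Phi\circ\gamma_v$, where $\gamma_v$ is the geodesic in~$M_1$ with $\dot\gamma_v(0)=v$. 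By the geodesic preservation property one has the conjugation identity
\begin{equation}
\label{eq:conjugation-identity}
\Phi\circ\exp_p = \exp_{\Phi(p)}\circ\, T\Phi_p
\end{equation}
on $U_p\cap(-U_p')$, where $U_p'=(T\Phi_p)^{-1}U_{\Phi(p)}$. (If $p\in\partial M_1$ the set $U_p$ is a closed half-space intersected with a bounded region, but the identity still holds on that domain.)

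Next I would show that $T\Phi_p$ is a linear isometry from $(T_pM_1,g_1(p))$ to $(T_{\Phi(p)}M_2,g_2(\Phi(p)))$. Positive homogeneity $T\Phi_p(\lambda v)=\lambda T\Phi_p(v)$ for $\lambda\geq0$ is immediate from reparametrization of geodesics. Norm preservation $|T\Phi_p(v)|_{g_2}=|v|_{g_1}$ follows because the $g_1$-length of a geodesic segment equals the distance between its endpoints, which~$\Phi$ preserves. To see that $T\Phi_p$ preserves the inner product (and hence is linear), I would compute the distance asymptotics
\begin{equation}
d_i(\exp_q(tv),\exp_q(tw))
=
t|v-w|_{g_i}+\Order(t^2)
\quad\text{as } t\to0^+,
\end{equation}
valid on any simple manifold with uniform Jacobi-field control (as quantified by Lemma~\ref{lma:JF-growth} and part~\ref{bound:6} of Definition~\ref{def:bdd-geometry}). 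Applying this on both sides of~\eqref{eq:conjugation-identity} yields $|T\Phi_p(v)-T\Phi_p(w)|_{g_2}=|v-w|_{g_1}$, which together with norm preservation and polarization forces $T\Phi_p$ to be linear and to preserve the inner product.

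Once $T\Phi_p$ is known to be a linear isometry, smoothness of~$\Phi$ up to the boundary follows from~\eqref{eq:conjugation-identity}: on a neighborhood of any point $q\in M_1$ we can write
\begin{equation}
\Phi = \exp_{\Phi(p)}\circ\, T\Phi_p\circ\exp_p^{-1},
\end{equation}
for any $p$ chosen so that~$q$ lies in the image of the smooth exponential chart (for interior~$q$ one can take $p=q$ and for boundary~$q$ one chooses~$p$ in the interior so that~$q$ is in the image of the open half-domain). Each factor is smooth up to the boundary by Lemma~\ref{lma:global-logarithm}, so~$\Phi$ is as well. The identity $\Phi^*g_2=g_1$ then follows by differentiating: the differential of~$\Phi$ at any $p$ equals~$T\Phi_p$, which is a linear isometry, so $(\Phi^*g_2)(p)=g_1(p)$. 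The main subtlety is the boundary case, namely ensuring the conjugation~\eqref{eq:conjugation-identity} yields a smooth expression at boundary points; this is handled by covering a neighborhood of $\partial M_1$ by exponential charts based at nearby interior points, which is allowed because the maximal exponential domains on a simple manifold reach up to and across the boundary in any inward geodesic direction.
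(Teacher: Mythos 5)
Your proposal is correct and follows essentially the same route as the paper: establish the conjugation $\Phi\circ\exp_p=\exp_{\Phi(p)}\circ\Phi_*$ via preservation of minimizing geodesics, prove the tangent map is a linear isometry by a small-time distance expansion, and then read smoothness off the exponential-chart factorization. The only cosmetic difference is that the paper uses the expansion $d(\exp_p(tv),\exp_p(tw))^2=t^2(\lvert v\rvert^2+\lvert w\rvert^2-2\langle v,w\rangle)+\Order(t^4)$ from~\cite{Katchalov2001} while you use its square root, and the paper sidesteps your boundary discussion entirely by fixing a single interior basepoint $x_1$, since on a simple manifold $\exp_{x_1}$ already covers $M_1$ up to and including $\partial M_1$.
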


\begin{proof}
It suffices to show that~$\Phi$ is smooth.
Then preserving distances quickly implies $\Phi^*g_2=g_1$ via differentiating distance functions.

Take any $x_1\in\sisus(M_1)$ and let $x_2=\Phi(x_1)$.
If~$\gamma$ is any constant speed geodesic with $\gamma(0)=x_1$, then $\Phi\circ\gamma$ is a constant speed geodesic starting at~$x_2$ with the same speed.
Let $\Phi_*\colon T_{x_1}M_1\to T_{x_2}M_2$ be the map that maps the velocity vectors of these geodesics to each other.
That is, if $\partial_t\gamma(t)|_{t=0}=v$, then $\partial_t\Phi(\gamma(t))|_{t=0}=\Phi_*(v)$.
We have $\Phi\circ\exp_{x_1}={\exp_{x_2}}\circ\Phi_*$.

The map~$\Phi_\ast$ is clearly homogeneous in positive scalings, maps unit vectors to unit vectors, and is bijective.
By considering reversed geodesics we find $\Phi_*(-v)=-\Phi_*(v)$ for all $v\in T_{x_1}M_1$.

For any two $v,w\in T_{x_1}M_1$ we have~\cite[Equation (2.15)]{Katchalov2001}
\begin{equation}
d(\exp_{x_1}(tv),\exp_{x_1}(tw))^2
=
t^2(
\abs{v}^2+\abs{w}^2-2\ip{v}{w}
)
+
\Order(t^4)
\end{equation}
for small $t>0$.
Due to~$\Phi_*$ preserving the norm, the intertwining property $\Phi\circ\exp_{x_1}={\exp_{x_2}}\circ\Phi_*$, and the isometric nature of~$\Phi$, this implies that
\begin{equation}
\ip{v}{w}
=
\ip{\Phi_*v}{\Phi_*w}
\end{equation}
for all $v,w\in T_{x_1}M_1$.

The map~$\Phi_*$ preserves norms and inner products, and therefore it preserves the (squared) distance between any pair of vectors in~$T_{x_1}M_1$.
Thus~$\Phi_\ast$ is an isometry between finite-dimensional inner product spaces, and as a Euclidean isometry fixing the origin it is linear and thus smooth.
As the exponential maps are diffeomorphisms on their maximal domain of definition, the map $\Phi={{\exp_{x_2}}\circ\Phi_*}\circ\exp_{x_1}^{-1}$ is smooth.
\end{proof}

\begin{proof}[Proof of Theorem~\ref{thm:B}]
%
Let $\iota_1\colon\p M_1\to M_1$ be the usual inclusion map and let $\iota_2=\phi\colon\p M_1\to M_2$ (with extended codomain here for convenience)
so that we have boundary inclusions $\iota_i\colon\p M_1\to M_i$ with the same domain.

For any $T>0$ consider data on the bounded set $\p M\times(-T,T)$.
This time the time interval extends into both the past and the future.
As in the proof of Theorem~\ref{thm:D} above, we get a finite metric space~$M_T$ and a map $\alpha_T\colon\p M_1\to M_T$ so that
\begin{equation}
\ghd{\p M_1}(M_T,\alpha_T;M_i,\iota_i)\to0
\end{equation}
as $T\to\infty$ for both $i=1,2$.
By Proposition~\ref{prop:E&L} the approximating metric space~$M_T$ constructed from data is the same for the two manifolds~$M_1$ and~$M_2$.

We then apply Proposition~\ref{prop:LGH}.
By the triangle inequality
\begin{equation}
\ghd{\p M_1}(M_1,\iota_1;M_2,\iota_2)\to0
\end{equation}
as $T\to\infty$ and so $\ghd{\p M_1}(M_1,\iota_1;M_2,\iota_2)=0$.
Now Proposition~\ref{prop:LGH} gives an isometry $\Phi\colon M_1\to M_2$ so that $\Phi\circ \iota_1=\iota_2$.
The condition $\Phi\circ \iota_1=\iota_2$ simply means that $\Phi|_{\p M_1}=\phi$.
By Lemma~\ref{lma:myers-steenrod} the isometry~$\Phi$ is actually smooth.

It remains to check that the sources $s=(p,t)\in S_1\subset\sisus(M_1)\times\R$ are mapped correctly.
By Theorem~\ref{thm:A} we know that there is a bijection, but we have to verify that it corresponds to the isometry~$\Phi$ as claimed.
In light of Proposition~\ref{prop:separation}, the data can be seen as a collection of graphs.
Let us denote $A_i=\{a_s ; \: s\in S_i\}$.
The two manifolds having equivalent data means that the map $\phi^*\colon A_2\to A_1$ that takes $a\mapsto\phi^*a=a\circ\phi$ is bijective.
We will show that the maps $b_i\colon S_i\to A_i$ with $b_i(s)=a_{s}$ are bijective, and therefore the natural bijection between the sources $S_1\to S_2$ is $\xi=b_2^{-1}\circ(\phi^*)^{-1}\circ b_1$.
We need this map to satisfy~\eqref{eq:xi-Phi}, which now amounts to
\begin{equation}
a_{(p,t)}
=
a_{(\Phi(p),t)}\circ\phi
\end{equation}
as functions $\partial M_1\to\R$ for all $(p,t)\in S_1$.
This follows straightforwardly from the definitions and~$\Phi$ being an isometry.

Let us then show that~$b_i$ is bijective for both~$i$.
To this end, take any two distinct sources $s,\hat s\in S_i$.
The two arrival time functions are
$a_s=r_s+\tau(s)$
and
$a_{\hat s}=r_{\hat s}+\tau(\hat s)$.

If the spatial source points are different, $\pi(s)\neq\pi(\hat s)$, then $r_s-r_{\hat s}$ cannot be a constant function on~$\partial M$ due to Lemma~\ref{lma:Slava+constant}. Therefore~$a_{s}$ and~$a_{\hat s}$ do not coincide.
If the spatial points are the same, then $r_s-r_{\hat s}$ is a constant function~$0$.
As $s\neq\hat s$ but $\pi(s)=\pi(\hat s)$, we must have $\tau(s)\neq\tau(\hat s)$ and thus $a_{s}-a_{\hat s}$ is a non-zero constant function.

We have thus proven that $s\neq\hat s$ implies $b_i(s)\neq b_i(\hat s)$.
This concludes the proof that~$b_i$ is injective and thus the theorem is proven.
\end{proof}

\subsection{Stochastic convergence}

The proof of the stochastic result only requires checking that the point process almost surely produces the correct kind of source set.

\begin{proof}[Proof of Proposition~\ref{prop:poisson}]
Countability of the set $S\subset M\times\R$ given by the homogeneous Poisson point process is certain.
We need to prove that the following properties hold almost surely:
\begin{enumerate}
\item\label{property:1}
$\pi(S)\cap\partial M=\emptyset$.
\item\label{property:2}
$S\subset M\times\R$ is discrete.
\item\label{property:3}
$\pi(S\cap(M\times[0,\infty)))\subset M$ is dense.
\end{enumerate}
The probability that a measurable set $A\subset M\times\R$ contains $k\in\N$ points of the source set~$S$ is
\begin{equation}
\label{eq:Poisson}
P[\#(A\cap S)=k]
=
e^{-\lambda\mu(A)}
\frac{(\lambda\mu(A))^k}{k!}.
\end{equation}
For this and other basic properties of homogeneous Poisson point processes, see e.g.~\cite{Lieshout:point-process,Isham:point-process}.

The first property follows simply from $\mu(\partial M\times\R)=0$.
The second property follows from $A\cap S$ being almost certainly finite when $\mu(A)<\infty$.

For the third property, let $(x_i)_{i=1}^\infty$ be a dense sequence in~$M$ and define for any $i,j\geq1$ the set
\begin{equation}
A_{i,j}
=
B_M(x_i,j^{-1})\times[0,\infty)
\subset
M\times\R.
\end{equation}
Comparability with the natural product measure gives $\mu(A_{i,j})=\infty$.
Therefore it follows from~\eqref{eq:Poisson} that $P[\#(A_{i,j}\cap S)=0]=0$.

The projection $\pi(S\cap(M\times[0,\infty)))$ is dense in~$M$ if $\#(A_{i,j}\cap S)>0$ for all~$i$ and~$j$.
As this event for individual indices has probability~$1$, the event for density is a countable intersection of events of full probability.
Therefore density is almost certain.

The conditions of Theorem~\ref{thm:D} are thus met almost surely.
\end{proof}

\appendix

\section{An improved estimate on transversal radius}
\label{app:transversal}

As mentioned in connection to Lemma~\ref{lma:trans-radius-and-ball}, the estimate can be improved although it is not necessary for the proof of our theorems.
The estimate $R\gtrsim\delta$ of Lemma~\ref{lma:trans-radius-and-ball} can be improved to $R\gtrsim\sqrt{\delta}$ as follows.

\begin{proposition}
\label{prop:transversal-radius}
Consider a simple manifold of bounded geometry with the constants satisfying the condition~\eqref{eq:diameter-curvature-bound}.
Take any $x,y\in M$ and consider the lentil~$L^{x,y}_{r,s}$ with $r,s\in(0,d(x,y))$.
Suppose $\delta^{x,y}_{r,s}=r+s-d(x,y)$ satisfies $\delta^{x,y}_{r,s}\leq\min(r,s)$.
The transversal radius of any lentil satisfies
\begin{equation}
R^{x,y}_{r,s}
>
\min\left(
\Cxvi\sqrt{\delta^{x,y}_{r,s}\min(r,s)},
\tfrac{1}{2}\Civ
\right),
\end{equation}
where the constant is given by~\eqref{eq:c16}.
\end{proposition}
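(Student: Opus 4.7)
The plan is to verify the transversal radius estimate by a direct comparison argument. Unpacking the definition, one must show that for each unit vector $\hat\eta \in T_m M$ orthogonal to $\dot\gamma_{x,y}$ at the lentil midpoint $m$ and each $R'$ strictly smaller than $\min(\Cxvi\sqrt{\delta\min(r,s)}, \tfrac12\Civ)$, the point $z := \exp_m(R'\hat\eta)$ lies in both $B(x,r)$ and $B(y,s)$. Swapping $x \leftrightarrow y$ and $r \leftrightarrow s$ is a symmetry of the setup, so it is enough to prove $d(x,z) < r$ whenever $R' < \min(\Cxvi\sqrt{\delta r}, \tfrac12\Civ)$.

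To that end, note that the geodesic triangle $xmz$ has legs $a := d(x,m) = r - \delta/2$ and $R'$ meeting at a right angle at $m$, since $\hat\eta \perp \dot\gamma_{x,y}$ and $x$ lies on $\gamma_{x,y}$. Lemma~\ref{lma:curvature-comparison2} provides a comparison triangle $\tilde x\tilde m\tilde z$ on $M_{\Cvi}$ with the same two legs, the same right angle at $\tilde m$, and $d(x,z) \leq d(\tilde x,\tilde z)$. The spherical law of cosines for a right triangle then gives
\begin{equation*}
\cos\!\bigl(d(\tilde x,\tilde z)\sqrt{\Cvi}\bigr)
=
\cos(a\sqrt{\Cvi})\cos(R'\sqrt{\Cvi}),
\end{equation*}
and under condition~\eqref{eq:diameter-curvature-bound} the bound $d(\tilde x,\tilde z) < r$ is equivalent to $\cos(a\sqrt{\Cvi})\cos(R'\sqrt{\Cvi}) > \cos((a+\delta/2)\sqrt{\Cvi})$. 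The main step is a case split on $\cos(a\sqrt{\Cvi})$. When $a\sqrt{\Cvi} \geq \pi/2$ the cosines on both sides are nonpositive; a short manipulation shows that any $R' < \pi/(2\sqrt{\Cvi})$ suffices, and the bound $R' < \tfrac12\Civ$ is enough thanks to~\eqref{eq:diameter-curvature-bound}. When $a\sqrt{\Cvi} < \pi/2$, dividing by $\cos(a\sqrt{\Cvi})$ and expanding via the cosine addition formula reduces the inequality to
\begin{equation*}
\cos(R'\sqrt{\Cvi})
>
\cos(\delta\sqrt{\Cvi}/2)
-
\tan(a\sqrt{\Cvi})\sin(\delta\sqrt{\Cvi}/2).
\end{equation*}
Applying $1 - \cos(x) \leq x^2/2$ on the left, $\tan(x) \geq x$ on $[0,\pi/2)$, and the linear lower bound $\sin(x) \geq T^{-1}\sin(T)\,x$ for $x \in [0,T]$ with $T = \Civ\sqrt{\Cvi}/2$ (mirroring a technique already used inside the proof of Lemma~\ref{lma:lentil-diam}) produces an inequality of the form $R'^2 \lesssim a\delta$ with an explicit constant depending only on the data of Definition~\ref{def:bdd-geometry}. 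The hypothesis $\delta \leq r$ turns $a = r - \delta/2 \geq r/2$, and one reads off the required $\Cxvi$ matching~\eqref{eq:c16}.

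The main obstacle is the behavior of the estimate as $a\sqrt{\Cvi}$ approaches $\pi/2$, where $\tan(a\sqrt{\Cvi})$ blows up and the clean square-root scaling $R' \lesssim \sqrt{r\delta}$ degenerates. This is precisely what forces the two-case analysis and the auxiliary alternative $\tfrac12\Civ$ inside the minimum in the statement: in the large-$a$ regime we do not improve on the bulk diameter scale, while in the small-$a$ regime the Euclidean identity $R^2 = r\delta - \delta^2/4$ is essentially preserved up to constants by the curvature comparison, yielding the advertised $\sqrt{\delta\min(r,s)}$ behavior.
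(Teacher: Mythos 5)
Your argument is correct in substance and takes the same comparison geometry route as the paper (Lemma~\ref{lma:curvature-comparison2} applied to the right triangle $x m z$, spherical law of cosines on $M_{\Cvi}$), but you process the resulting inequality differently. The paper bounds the comparison hypotenuse $\tilde d$ directly by writing it as $\Cvi^{-1/2}\arccos[\cos(a\sqrt{\Cvi})\cos(R'\sqrt{\Cvi})]$ and using the Lipschitz modulus of $t\mapsto\arccos(1-t^2)$ together with Taylor bounds to reach $\tilde d\leq a+\Cxxiii a^{-1}R'^2$, then demands this be $<r$. You instead keep the inequality $\cos(a\sqrt{\Cvi})\cos(R'\sqrt{\Cvi})>\cos(r\sqrt{\Cvi})$ intact, split on the sign of $\cos(a\sqrt{\Cvi})$, and in the nontrivial case unwind it with the cosine addition formula plus the elementary bounds $\cos(x)\geq1-x^2/2$, $\tan(x)\geq x$, and a linear lower bound for $\sin$. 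That is cleaner and avoids the $\arccos$-Lipschitz bookkeeping; the case split you introduce plays essentially the role the blanket assumption $R'\leq\tfrac12\Civ$ plays in the paper's argument.

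Two small corrections. First, after reducing to $\cos(R'\sqrt{\Cvi})>\cos(\delta\sqrt{\Cvi}/2)-\tan(a\sqrt{\Cvi})\sin(\delta\sqrt{\Cvi}/2)$ and applying your three elementary bounds, you still need to discard the $\cos(\delta\sqrt{\Cvi}/2)$ term on the right; the crude bound $\cos\leq1$ does the job, but it should be stated, as it is a genuine step (and it is where your proof deviates from the Euclidean exact identity $R^2=r\delta-\delta^2/4$). Second, the constant you actually obtain from $R'^2< a\,\delta\,\operatorname{sinc}(\Civ\sqrt{\Cvi}/2)$ together with $a\geq r/2$ is $\sqrt{\operatorname{sinc}(\Civ\sqrt{\Cvi}/2)/2}$, which is \emph{not} $\Cxvi=2^{-3/2}\Cxxx^{1/4}=2^{-5/4}\cos(\Civ\sqrt{\Cvi}/2)^{1/2}$: your constant is strictly larger for all admissible parameters (the ratio is $2^{-3/4}\sqrt{T/\tan T}<1$ with $T=\Civ\sqrt{\Cvi}/2$), and unlike $\Cxvi$ it stays bounded away from zero as $\Civ\sqrt{\Cvi}\to\pi$. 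Since a larger lower-bound constant only strengthens the proposition, this is harmless and in fact an improvement, but the claim that you ``read off the required $\Cxvi$ matching~\eqref{eq:c16}'' should be replaced by the observation that you get a constant at least as large.
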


Most of our constants are used for estimates from above, but~$\Cxvi$ is used for estimating from below.
Therefore, unlike most of our constants, it can be made smaller but not larger if needed.
It is also natural that the transversal radius cannot exceed half of the diameter.
Lemma~\ref{lma:trans-radius-and-ball} is more convenient to use and we do not benefit significantly from the improved exponent, so we will not employ Proposition~\ref{prop:transversal-radius} in the proofs of our main results.

\begin{proof}[Proof of Proposition~\ref{prop:transversal-radius}]
Consider a lentil $L=L^{x,y}_{r,s}$ with thickness $\delta=\delta^{x,y}_{r,s}$ and any point $z\in M$ for which the closest point on~$\gamma_{x,y}$ is the midpoint $m=m^{x,y}_{r,s}$.
We want to find conditions on the distance $w\coloneqq d(z,m)$ which ensure that $z\in L$.
We will do so by making explicit estimates on a comparison manifold of constant sectional curvature and then translating the resulting estimate to the actual manifold.

We assume that $w\leq\tfrac{1}{2}\Civ$.
Due to~\eqref{eq:diameter-curvature-bound} this implies
$
w^2\Cvi
<
\pi^2/4
$,
and so $1-\frac12w^2\Cvi\in(-1,1]$.
This will ensure that the argument of $\arccos\colon[-1,1]\to[0,\pi]$ stays within the domain in the following treatment.
We may freely assume $w>0$ when convenient, as the case $w=0$ can be given a trivial separate treatment.

Let us denote $R\coloneqq d(x,m)$ and $d\coloneqq d(x,z)$.
The points~$x$, $z$, and~$m$ form a triangle with a right angle at~$m$.
Consider the corresponding triangle on the~$M_{\Cvi}$ with constant sectional curvature~$\Cvi$ with with the right angle at~$\tilde m$ with the same lengths~$R$ and~$w$ of the catheti.
The length of the hypotenuse is
\begin{equation}
\tilde d
=
\Cvi^{-1/2}
\arccos[\cos(R\sqrt{\Cvi})\cos(w\sqrt{\Cvi})]
. 
\end{equation}
The cosine satisfies $\cos(x)\geq1-\frac12x^2$, so
\begin{equation}
\tilde d
\leq
\Cvi^{-1/2}
\arccos[\cos(R\sqrt{\Cvi})[1-\tfrac12w^2\Cvi]].
\end{equation}
The function $t\mapsto\arccos(1-t^2)$ is Lipschitz-continuous on $[0,T]$
with the Lipschitz constant
\begin{equation}
\frac{2}{\sqrt{2-T^2}}
\end{equation}
whenever $T<\sqrt{2}$.
In our setting
$T=\sqrt{\Cxxx}$
(with the constant of~\eqref{eq:c30})
and the two values of~$t$ where we compare $\arccos(1-t^2)$ are
\begin{equation}
\sqrt{1-\cos(R\sqrt{\Cvi})[1-\tfrac12w^2\Cvi]}
\end{equation}
and
\begin{equation}
\sqrt{1-\cos(R\sqrt{\Cvi})}.
\end{equation}
The condition~\eqref{eq:diameter-curvature-bound} implies that indeed $T< \sqrt{2}$, so the Lipschitz constant is~$\Cxxiii$ as given in~\eqref{eq:c23}.
This Lipschitz-continuity now gives
\begin{equation}
\tilde d
\leq
R
+
\Cvi^{-1/2}\Cxxiii
\left[
\sqrt{1-\cos(R\sqrt{\Cvi})[1-\tfrac12w^2\Cvi]}
-
\sqrt{1-\cos(R\sqrt{\Cvi})}
\right]
.
\end{equation}
Using the estimate $\sqrt{1-a+ab}\leq\sqrt{1-a}+\frac12\abs{ab}\sqrt{1-a}$ (which is valid for $a<1$ and $b\in\R$) leads to
\begin{equation}
\tilde d
\leq
R
+
\Cvi^{-1/2}\Cxxiii
\frac{\Cvi\abs{\cos(R\sqrt{\Cvi})}}{4\sqrt{1-\cos(R\sqrt{\Cvi})}}
w^2
.
\end{equation}
A simple calculation gives $\abs{\cos(x)}[1-\cos(x)]^{-1/2}\leq4/x$ for all $x\in(0,\pi)$.
Thus we find
\begin{equation}
\tilde d
\leq
R
+
\Cxxiii R^{-1}w^2,
\end{equation}
completing our estimate of the comparison length~$\tilde d$.

By Lemma~\ref{lma:curvature-comparison2} we have $d\leq\tilde d$, and so
\begin{equation}
\label{eq:d<R+w}
d
\leq
R+\Cxxiii R^{-1}w^2.
\end{equation}
Let us use this to see when $z\in L$.

To ensure $z\in L$, we need $d<r$.
As $r=R+\frac12\delta$, estimate~\eqref{eq:d<R+w} says that $d<r$ whenever $\Cxxiii R^{-1}w^2<\frac12\delta$.
Thus we get the condition $w^2<(2\Cxxiii)^{-1}\delta(r-\frac12\delta)$ and the rougher bound $\tfrac{1}{2}\Civ$.
A similar treatment of the condition $d(y,z)<s$ leads us to $w^2<(2\Cxxiii)^{-1}\delta(s-\frac12\delta)$.
We have thus shown that
\begin{equation}
R^{x,y}_{r,s}
>
\min\left(
(2\Cxxiii)^{-1/2}
\sqrt{\delta(r-\tfrac12\delta)}
,
(2\Cxxiii)^{-1/2}
\sqrt{\delta(s-\tfrac12\delta)}
,
\tfrac{1}{2}\Civ
\right).
\end{equation}
As $\delta<r,s$, this implies the claimed inequality.
\end{proof}

\section{Constants}
\label{app:const}

The constants in our results depend explicitly on the constants describing the simplicity of the manifold in Definition~\ref{def:bdd-geometry}.
The same applies to a number of auxiliary constants appearing in our lemmas and their proofs.
To express these constants neatly, we make use of the three constants
\begin{align}
\label{eq:c28}
\Cxxviii
&=
\operatorname{sinc}(\Civ\sqrt{\Cvi})
,
\\
\label{eq:c13}
\Cxiii
&=
\operatorname{sinhc}(\Civ\sqrt{\Cv})
,
\\
\label{eq:c30}
\Cxxx
&=
\operatorname{vercos}(\Civ\sqrt{\Cvi}).
\end{align}
defined in terms of three less common trigonometric functions:
$\operatorname{sinc}(x)=\sin(x)/x$,
its hyperbolic counterpart $\operatorname{sinhc}(x)=\sinh(x)/x$,
and the vercosine $\operatorname{vercos}(x)=1+\cos(x)$.

We will express our constants below in two forms.
The first one is neatly expressed in terms of the fundamental constants and those of~\eqref{eq:c28}, \eqref{eq:c13}, and~\eqref{eq:c30}.
The second one is the one that arises naturally from the proof.
We recommend using the first ones for reading the claims and the second ones for reading the proofs.
Our main results use these constants:
\begin{align}
\label{eq:c9}
\Cix
&=
2\Cvii\Cxiii
\\&=
2\Cxiv,
\\
\label{eq:c10}
\Cx
&=
2+5\Cvii\Cxiii
\\&=
2+\Cix+\Cxvii,
\\
\label{eq:c11}
\Cxi
&=
2\sqrt{10}\Civ^{1/2}\Cvii^{1/2}\Cxxviii^{1/2}\Cxiii^{3/2}
\\&=
\Cxv\sqrt{\Cix},
\\
\label{eq:c12}
\Cxii
&=
3+5\Cvii\Cxiii
\\&=
\Cx+1,
\\
\label{eq:c19}
\Cxix
&=
\Cvii^{-1}\Cxiii^{-1}
\\&=
2\Cix^{-1}
,
\\
\label{eq:c25}
\Cxxv
&=
\frac{\Cxx}{2(\Cxxi+\Cii)}
,
\\
\label{eq:c26}
\Cxxvi
&=
\frac{1}{2(3+5\Cvii\Cxiii)(\Cvii^{-1}\Cxiii^{-1}+2\Ci\Cxx^{-1}+2\Cviii)}
\\&=
\frac{1}{2\Cxii(\Cxix+2\Ci\Cxx^{-1}+2\Cviii)}
,
\\
\label{eq:c27}
\Cxxvii
&=
\frac{1}{160\Civ\Cvii\Cxxviii\Cxiii^3(\Cvii^{-1}\Cxiii^{-1}+2\Ci\Cxx^{-1}+2\Cviii)}
\\&=
\frac{1}{4\Cxi^2(\Cxix+2\Ci\Cxx^{-1}+2\Cviii)}
.
\end{align}

Our auxiliary results make use of these constants:
\begin{align}
\label{eq:c14}
\Cxiv
&=
\Cvii\Cxiii
,
\\
\label{eq:c15}
\Cxv
&=
2\sqrt{5}\Cxxviii^{1/2}\Cxiii\Civ^{1/2}
,
\\
\label{eq:c22}
\Cxxii
&=
\Cxxviii^{-1}
,
\\
\label{eq:c17}
\Cxvii
&=
3\Cxiv
.
\end{align}

Finally, these constants appear in Proposition~\ref{prop:transversal-radius} in Appendix~\ref{app:transversal}:
\begin{align}
\label{eq:c16}
\Cxvi
&=
2^{-3/2}
\Cxxx^{1/4}
,
\\
\label{eq:c23}
\Cxxiii
&=
2\Cxxx^{-1/2}
.
\end{align}

\bibliographystyle{abbrv}
\bibliography{bibliography,introduction}

\end{document}